\newcommand\T{\rule{0pt}{2.6ex}}       
\newcommand\B{\rule[-1.2ex]{0pt}{0pt}} 
\numberwithin{equation}{section}
\newtheorem{lemma}{Lemma}
\newtheorem{theorem}{Theorem}
\newtheorem{proposition}{Proposition}
\theoremstyle{remark}
\newtheorem*{remark}{Remark}
\begin{document}

\title{\textbf{Diffusion approximation for efficiency-driven queues:\\
A space-time scaling approach}}

\author{Shuangchi He\thanks{Department of Industrial and Systems Engineering,
National University of Singapore, heshuangchi@nus.edu.sg}}

\date{September 16, 2015}
\maketitle

\begin{abstract}
	
Motivated by call center practice, we propose a tractable model for
$\mbox{GI}/\mbox{GI}/n+\mbox{GI}$ queues in the efficiency-driven (ED) regime.
We use a one-dimensional diffusion process to approximate the virtual waiting
time process that is scaled in both space and time, with the number of servers
and the mean patience time as the respective scaling factors. Using this
diffusion model, we obtain the steady-state distributions of virtual waiting
time and queue length, which in turn yield simple formulas for performance
measures such as the service level and the effective abandonment fraction. These
formulas are generally accurate when the mean patience time is several times
longer than the mean service time and the patience time distribution does not
change rapidly around the mean virtual waiting time. For practical purposes,
these formulas outperform existing results that rely on the exponential service
time assumption.

To justify the diffusion model, we formulate an asymptotic framework by
considering a sequence of queues, in which both the number of servers and the
mean patience time go to infinity. We prove that the space-time scaled virtual
waiting time process converges in distribution to the one-dimensional diffusion
process. A fundamental result for proving the diffusion limit is a functional
central limit theorem (FCLT) for the superposition of renewal processes. We
prove that the superposition of many independent, identically distributed
stationary renewal processes, after being centered and scaled in space and time,
converges in distribution to a Brownian motion. As a useful technical tool, this
theorem characterizes the service completion process in heavy traffic, allowing
us to greatly simplify the many-server analysis when service times follow a
general distribution.

\end{abstract}

\section{Introduction}
\label{sec:Introduction}

Queues with many parallel servers are building blocks for modeling call center
operations; see \citet{GansETAL03} and \citet{AksinETAL07a} for comprehensive
reviews. A large call center faces a great amount of traffic that is stochastic
and time-varying. Since the rate of incoming calls changes over time, the system
may become overloaded during peak hours. Waiting on a phone line, a customer may
hang up before being connected to an agent. This phenomenon, referred to as
customer abandonment, is present in almost all call centers and becomes
prominent when the system is overloaded. A call center may also be intentionally
operated in an overloaded regime. Nowadays, more and more firms outsource their
call centers to save costs. In service-oriented call centers, staffing costs
usually dominate the expenses of customer delay and abandonment. As pointed out
by \citet{Whitt06}, the rational operational regime for these call centers is
the \emph{efficiency-driven (ED)} regime that emphasizes capacity utilization
over the quality of service. In the ED regime, the service capacity is set below
the customer arrival rate by a moderate fraction. Because the lost demands of
abandoning customers compensate for the excess of customer arrivals over the
service capacity, a call center operated in the ED regime can still achieve
reasonable service quality. More specifically, the mean customer waiting time is
comparable to the mean service time, a moderate fraction of customers abandon
the system, and all agents are almost always busy.

Service requirements for an outsourced call center are specified in the
service-level agreement (SLA) between the firm and the call center provider. The
SLA includes performance objectives such as the average service time, the
acceptable abandonment rate, and the acceptable customer delay time. One
important service level objective is a specified percentage of customers to be
served within a given delay, e.g., ``80\% of calls should be answered within one
minute.'' Based on this service level, \citet{BaronMilner09} studied SLA design
using the $ \mbox{M}/\mbox{M}/n+\mbox{M} $ model, and \citet{MandelbaumZeltyn09}
studied call center staffing using the $ \mbox{M}/\mbox{M}/n+\mbox{GI} $ model.
Despite the wide use of the above models, it was pointed out by
\citet{BrownETAL05} that the exponential service time distribution is not a
realistic assumption for call center customers. Therefore, these models may not
be able to provide adequate estimates for the performance measures required by
the SLA. Call center managers may need a more accurate but still tractable model
for performance analysis and staff deployment.

For queues in the ED regime, a fluid model proposed by \citet{Whitt06} is useful
for estimating several performance measures, including the fraction of
abandoning customers, the mean queue length, and the mean virtual waiting time.
In the $\mbox{M}/\mbox{M}/n+\mbox{GI}$ setting, the accuracy of the fluid model
was studied by \citet{BassambooRandhawa10}. They proved that in the steady
state, the accuracy gaps of fluid approximations for the mean queue length and
the rate of customer abandonment do not increase with the arrival rate. As a
deterministic model, however, the fluid model cannot be used to predict the
percentage of customers to be served within a time limit. A refined model is
thus necessary for estimating such a measure.

The focus of this paper is a diffusion model for many-server queues in the ED
regime. Both the service and patience time distributions are assumed to be
\emph{general}. Using this diffusion model, we obtain the steady-state
distributions of virtual waiting time and queue length, which in turn yield
approximate formulas for performance measures such as the service level
mentioned above. These formulas are able to produce accurate estimates,
especially when customer patience times are relatively long compared with their
service times. Empirical studies suggest that this requirement is realistic for
service-oriented call centers. For example, it was reported by
\citet{MandelbaumETAL01} and \citet{MandelbaumZeltyn13} that in the call center
of an Israeli bank, the mean customer patience time was several times longer
than the mean service time. By numerical experiments, we demonstrate that for
practical purposes, this diffusion model outperforms existing models that
rely on the exponential service time assumption; see Section~\ref{sec:staffing}.

The general service time assumption is a major challenge in the analysis of
many-server queues. In the literature, the studies of many-server queues with a
general service time distribution usually involve the analysis of one or several
infinite-dimensional processes, which are used for tracking customer age or
residual times. The resulting approximate models are also infinite-dimensional,
typically in the form of two-parameter or measure-valued processes; see, e.g.,
\citet{Whitt06}, \citet{KangRamanan10}, \citet{KaspiRamanan11}, \citet{Zhang13},
and \citet{KaspiRamanan13}. Because these approximate models are either
deterministic or too complex to be used for estimating a distribution, explicit
formulas have been absent from the literature for the steady-state virtual
waiting time and queue length distributions. Estimation of these distributions
relies heavily on simulation; see, e.g., \citet{BlanchetLam14}. The performance
formulas provided in this paper can fill this gap for queues in the ED regime.

We use a \emph{one-dimensional} Ornstein--Uhlenbeck (OU) process to approximate
the virtual waiting time process. We obtain this diffusion model by scaling a
many-server queue in both \emph{space} and \emph{time}, and then replacing the
centered and scaled arrival, service completion, and abandonment processes with
mutually independent Brownian motions. Depending on the service time
distribution through the first two moments, this model allows us to obtain
Gaussian approximations for the steady-state virtual waiting time and queue
length distributions. In contrast, the approximate models in the literature are
derived by scaling many-server queues in \emph{space} only. As a result, the
general service time distribution needs to be incorporated in the approximate
model, leading to an infinite-dimensional Markovian representation. It is well
known that when a many-server queue is critically loaded, the system performance
depends on the entire service time distribution and differs from that of a queue
with one or several servers significantly; see, e.g., \citet{DaiHeTezcan10},
\citet{MandelbaumMomcilovic12}, and \citet{KaspiRamanan13}. From a macroscopic
perspective in both space and time, we demonstrate that in the ED regime, the
dynamics of a many-server queue could be as simple as that of a single-server
queue, even though the service time distribution is assumed to be general. A
one-dimensional diffusion process, depending on the service time distribution
only by its first two moments, may suffice to capture the dynamics of the
many-server system. (One may also refer to \citet{He13}, an earlier version of
this paper, where we proved a common diffusion limit for systems with an
exponential patience time distribution, with either a single or many servers in
the overloaded regime, using a different method than the one in this paper.)
Such a simple model is much more attractive for analysis and control purposes.

The diffusion model and the performance formulas are rooted in the limit
theorems presented in Section~\ref{sec:Limits}. For queues in the ED regime,
\citet{DaiHeTezcan10} proved a multi-dimensional diffusion limit for the $
\mbox{GI}/\mbox{Ph}/n+\mbox{M} $ model, and \citet{HuangETAL14} proved a
one-dimensional diffusion limit for the $ \mbox{GI}/\mbox{M}/n+\mbox{GI} $
model. In addition, \citet{HuangETAL14} applied the obtained diffusion limit to
delay announcement in call centers. In this paper, we consider a sequence of
$\mbox{GI}/\mbox{GI}/n+\mbox{GI}$ queues indexed by the number of servers $n$,
and assume that the mean patience time goes to infinity as $n$ goes large. In
this asymptotic framework, the virtual waiting time process of each queue is
scaled in both space and time, with the number of servers and the mean patience
time being the respective scaling factors. The joint scaling scheme is essential
to obtain a Brownian approximation for the service completion process,
eventually leading to a one-dimensional diffusion limit. In the previous studies
such as \citet{DaiHeTezcan10}, \citet{MandelbaumMomcilovic12}, and
\citet{KaspiRamanan13}, the queueing processes are scaled by the number of
servers only. In this case, only when the service time distribution is
exponential, will the scaled queueing process converges to a one-dimensional
diffusion process. To the best of our knowledge,
Theorem~\ref{theorem:many-server-virtual} in this paper is the first rigorous
result that identifies a one-dimensional diffusion limit for many-server queues
with a \emph{general} service time assumption.

The technique of joint scaling in space and time was adopted by
\citet{Whitt03,Whitt04}, \citet{Gurvich04}, and \citet{Atar12} for many-server
queues with an \emph{exponential} service time distribution. \citet{Whitt04}
considered the $ \mbox{M}/\mbox{M}/n/r+\mbox{M} $ model and proved that in the
ED regime, the queue length process has a diffusion limit when the product of
the number of servers and the mean patience time goes to infinity. A critically
loaded regime, known as the nondegenerate slowdown regime, was studied by
\citet{Whitt03}, \citet{Gurvich04}, and \citet{Atar12}. In this regime, the
diffusion limit for the queue length process is proved to be either a reflected
OU process when the patience time distribution is exponential, or a reflected
Brownian motion when there is no customer abandonment. Because of the
exponential service time assumption, it is certain that those diffusion limits
are one-dimensional. The ability of space-time scaling to simplify the analysis
of many-server queues is barely manifested by those papers. In this sense, it is
the \emph{general} service time assumption that distinguishes our work from
others. By means of space-time scaling, we provide a paradigm for building a
tractable model for many-server queues with a general service time distribution.

Proving the diffusion limit requires new fundamental tools.
Theorem~\ref{theorem:FCLT} in this paper is a functional central limit theorem
(FCLT) for the superposition of renewal processes. We prove that the
superposition of $n$ independent, identically distributed (iid) stationary
renewal processes, after being centered and scaled in space and time, converges
in distribution to a Brownian motion as $n$ goes large. For a many-server system
in heavy traffic, the space-time scaled service completion process is
characterized by this theorem, which allows us to bypass the analysis of the
queue's infinite-dimensional state process. To apply this theorem, we consider a
sequence of perturbed systems that are asymptotically equivalent to the original
queues. We assume that servers in a perturbed system are always busy so that the
service completion process is the superposition of $n$ renewal processes. The
simplified dynamics of the perturbed system enable us to follow the procedure in
\citet{HuangETAL14}, proving the diffusion limit by a continuous mapping
approach.

We would summarize the contributions of this paper as follows. First, the
explicit formulas obtained from the diffusion model are practical tools for
performance estimation and staff deployment in efficiency-driven service
systems. In spite of their simple expressions, these formulas are superior to
the widely used formulas relying on the exponential service time assumption.
Second, we establish limit theorems to justify the diffusion model. By means of
space-time scaling, we prove a one-dimensional diffusion limit for many-server
queues with a general service time distribution. This joint scaling approach is
useful for building tractable models for queues in the ED regime. Third, we
prove an FCLT for the superposition of renewal processes. It is a fundamental
result that characterizes the service completion process of a many-server queue
in heavy traffic. This theorem enables us to bypass the infinite-dimensional
analysis in proving the diffusion limit.

The remainder of the paper is organized as follows. The diffusion model and the
performance formulas are introduced in Section~\ref{sec:Diffusion}. The limit
theorems for the diffusion model are presented in Section~\ref{sec:Limits}. We
examine and discuss the approximate formulas by numerical experiments in
Section~\ref{sec:Numerical}. Section~\ref{sec:Proof-virtual} is dedicated to the
proof of the diffusion limit for the virtual waiting time process, and
Section~\ref{sec:Proof-queue} presents the proof for the queue length limit. The
paper is concluded in Section~\ref{sec:conclusion}. We leave the proof of the
FCLT and the proofs of technical lemmas to the appendix.

Let us close this section with frequently used notation. The space of functions
$f:\mathbb{R}_{+}\rightarrow\mathbb{R}^{k}$ that are right-continuous on
$[0,\infty)$ and have left limits on $(0,\infty)$ is denoted by $\mathbb{D}^{k}$
(with  $ \mathbb{D} = \mathbb{D}^{1} $), which is endowed with the Skorokhod
$J_{1}$ topology (see, e.g., \citet{Billingsley99}). For $ f\in\mathbb{D} $, we
use $ f(t-) $ to denote the left limit of $ f $ at $t$ and $\Delta f(t)$ the
increment at $t$, i.e., $\Delta f(t) = f(t)-f(t-)$. For $ T>0 $, we use $
\int_{0}^{T} |\mathrm{d}f(t)|$ to denote the total variation of $ f $ over $
[0,T] $. For $f^{\prime}\in\mathbb{D}$ that is nondecreasing and takes values in
$\mathbb{R}_{+}$, $f\circ f^{\prime}$ is the composed function in $\mathbb{D}$,
i.e., $(f\circ f^{\prime})(t) = f(f^{\prime}(t))$. We use $e$ for the identity
function on $\mathbb{R}_{+}$ and $\chi$ the constant one function on
$\mathbb{R}_{+}$, i.e., $e(t)=t$ and $\chi(t)=1$ for $t\geq0$.

\section{Diffusion model and performance formulas}
\label{sec:Diffusion}

Consider a $\mbox{GI}/\mbox{GI}/n+\mbox{GI}$ queue, whose customer arrival
process is a renewal process and service times are iid nonnegative random
variables. Customers are served by $n$ identical servers. Upon arrival, a
customer gets into service if an idle server is available; otherwise, he waits
in a buffer with infinite room. Waiting customers are served on the first-come,
first-served basis, and the servers are not allowed to idle if there are
customers waiting. Each customer has a random patience time. When a customer's
waiting time exceeds his patience time, the customer abandons the system without
being served. The patience times are iid nonnegative random variables, and the
sequences of interarrival, service, and patience times are mutually independent.

Let $\lambda$ be the customer arrival rate and $\mu$ the service rate of each
server. The traffic intensity satisfies $\rho = \lambda/(n\mu)>1$. When a
many-server queue becomes overloaded, all servers will be almost always busy, so
the fraction of abandoning customers will be around
\begin{equation}
\alpha = \frac{\rho-1}{\rho}.
\label{eq:abd-fraction}
\end{equation}
Assume that both interarrival times and service times have finite variances,
with squared coefficients of variation $c_{a}^{2}$ and $c_{s}^{2}$,
respectively. Let $\Theta$ be the distribution function of patience times and $
\gamma $ the mean patience time. Assume that $\Theta$ is absolutely continuous
with a bounded, strictly positive density function $ f_{\Theta} $.

Suppose that at time $t\geq0$, a hypothetical customer with infinite patience
arrives at the queue. Let $W(t)$ be the \emph{virtual waiting time} at $ t $,
i.e., the amount of time this hypothetical customer has to wait before getting
into service. As the queue comes into the steady state, the virtual waiting
time process fluctuates around a mean level $w$ that can be determined as
follows: Because $\Theta(w)$ is the fraction of customers whose patience times
are less than $w$, it should be approximately equal to the fraction of
abandoning customers. Then, $\Theta(w) = (\rho-1)/\rho$, which yields
\begin{equation}
w = \Theta^{-1}\Big(\frac{\rho-1}{\rho}\Big).
\label{eq:mean-virtual}
\end{equation}
To represent the fluctuation of the virtual waiting time process around this
equilibrium level, we introduce a centered and scaled version of $W$ by
\[
\tilde{W}(t) = \sqrt{\frac{n}{\gamma}}(W(\gamma t)-w).
\]
We refer to $ \tilde{W} $ as the \emph{diffusion-scaled} virtual waiting time
process. To obtain this process, we scale the virtual waiting time process in
both space and time after removing the mean $ w $. Besides the commonly used
scaling in space by the number of servers, we also change the time scale of the
process by using the \emph{mean patience time} as the scaling factor.

We approximate the diffusion-scaled virtual waiting time process by an OU
process, which is given by the following stochastic differential equation
\[
\hat{W}(t) = \tilde{W}(0) + \hat{M}(t) - \rho\gamma f_{\Theta}(w)\int_{0}^{t}
\hat{W}(u)\,\mathrm{d}u \quad \mbox{for }t\geq 0.
\]
Here, $ \hat{M} $ is a driftless Brownian motion with $\hat{M}(0) = 0$ and
variance
\[
\hat{\sigma}_{m}^{2} = \frac{c_{a}^{2}+\rho c_{s}^{2}+\rho-1}{\rho\mu}.
\]
The OU process is a reasonable approximate model because the virtual waiting
time process is mean-reverting: As the virtual waiting time fluctuates around $
w $, the instantaneous abandonment rate from the buffer fluctuates accordingly.
If the probability density function of patience times does not change much
around $ w $, the relative variation in the abandonment rate will be
approximately proportional to the relative variation in the virtual waiting
time. When the virtual waiting time is either too long or too short, the
increased or decreased abandonment rate will pull it back toward the mean level
$ w $.

For the diffusion approximation to be accurate, the mean patience time $\gamma$,
serving as the scaling factor in time, should be \emph{relatively long} compared
with the mean service time. This requirement can be justified by
Theorem~\ref{theorem:many-server-virtual}, where the ED regime is formulated
into an asymptotic framework and $ \hat{W} $ is proved to be the limit of a
sequence of diffusion-scaled virtual waiting time processes. Although the mean
patience time goes to infinity in this asymptotic framework, the diffusion model
may still produce accurate performance estimates when it is just several times
longer than the mean service time. We will discuss the influence of the mean
patience time in Section~\ref{sec:influence}.

The OU process is strongly ergodic with a Gaussian steady-state distribution
(see, e.g., \citet{KarlinTaylor81}). More specifically, the steady-state
distribution of $\hat{W}$ has mean zero and variance
\[
\hat{\sigma}_{w}^{2} = \frac{c_{a}^{2}+\rho c_{s}^{2}+\rho-1} {2\rho^{2}
\mu\gamma f_{\Theta}(w)}.
\]
Let $ W(\infty) $ be the virtual waiting time in the steady state and $
\tilde{W}(\infty) $ be the diffusion-scaled version. Because $\hat{W}$ is an
approximation of $\tilde{W}$, their steady-state distributions are expected to
be close, i.e.,
\begin{equation}
\mathbb{P}[\tilde{W}(\infty)>a]\approx
1-\Phi\Big(\frac{a}{\hat{\sigma}_{w}}\Big)
\quad\mbox{for }a\in\mathbb{R},
\label{eq:W-tilde-tail}
\end{equation}
where $\Phi$ is the standard Gaussian distribution function. Consequently, the
steady-state virtual waiting time approximately follows a Gaussian distribution
with mean $w$ and variance
\begin{equation}
\sigma_{w}^2= \frac{c_{a}^{2}+\rho c_{s}^{2}+\rho-1}{2 n\rho^{2}
\mu f_{\Theta}(w)}.
\label{eq:variance-virtual}
\end{equation}

The percentage of customers to be served within a specified delay is referred to
as the \emph{service level} in practice. Let $ \zeta $ be a random variable that
has distribution $ \Theta $ and is independent of $ W(\infty) $. The service
level within $ d\geq 0 $ can be approximated by
\begin{equation}
\mathbb{P}[W(\infty) \leq \zeta \wedge d] \approx
\int_{0}^{\infty}\Phi_{w}(u\wedge d)f_{\Theta}(u)\,\mathrm{d}u
\label{eq:percentage}
\end{equation}
where
\[
\Phi_{w}(u)=\Phi\bigg(  \frac{\rho(u-w)\sqrt{2n\mu f_{\Theta}(w)}}{\sqrt
	{c_{a}^{2}+\rho c_{s}^{2}+\rho-1}}\bigg)  .
\]
In many service systems, the fraction of abandoning customers whose actual
waiting times exceed a short delay is an important measure of the quality of
service and customer satisfaction. We refer to this fraction as the
\emph{effective abandonment fraction} because it excludes those abandoning
customers whose effort of waiting is insignificant. For queues in the ED regime,
the fraction of abandoning customers out of those whose waiting times exceed $
d\geq 0 $ can be approximated by
\begin{equation}
\mathbb{P}[\zeta \leq W(\infty)\,|\, \zeta\wedge W(\infty) > d] \approx
\frac{\int_{d}^{\infty} (1-\Phi_{w}(u)) f_{\Theta}(u) \,\mathrm{d}u}
{(1-\Theta(d))(1-\Phi_{w}(d))}.
\label{eq:fraction-abandonment}
\end{equation}
Note that when $ d\geq w $, we cannot use the fluid model by \citet{Whitt06} to
estimate this fraction. This is because the steady-state virtual waiting time is
equal to $ w $ in the fluid model, by which the estimate of $ \mathbb{P}[\zeta
\wedge W(\infty) > d] $ must be zero for $ d\geq w $.

We are also interested in the distribution of the steady-state queue length. For
$ 0 < u < w $, the probability that a customer who arrived $ u $ time units ago
is still waiting in the buffer is around $ 1-\Theta(u) $. The mean queue length
(i.e., the mean number of customers in the buffer) can thus be approximated by
\begin{equation}
q = \int_{0}^{w} \lambda(1-\Theta(u))\,\mathrm{d}u.
\label{eq:mean-queue}
\end{equation}
Let $X(t)$ be the number of customers in the system at time~$t$, which
fluctuates around $n+q$ as the queue comes into the steady state. A centered and
scaled version of $ X $ is defined by
\[
\tilde{X}(t)=\frac{1}{\sqrt{n\gamma}}(X(\gamma t)-n-q).
\]
We refer to $ \tilde{X} $ as the \emph{diffusion-scaled} queue length process.

Let $X(\infty)$ be the number of customers in the steady state and $
\tilde{X}(\infty) $ be the diffusion-scaled version.
Theorem~\ref{theorem:many-server} in Section~\ref{sec:Limits} implies that
$\tilde{X}(\infty)$ approximately follows a Gaussian distribution with mean zero
and variance
\[
\hat{\sigma}^{2}_{x} =
\frac{\mu(c_{a}^{2}+\rho c_{s}^{2}+\rho-1)} {2\rho^{2}
	\gamma f_{\Theta}(w)}
+ \frac{\rho\mu}{\gamma}\int_{0}^{w} \Theta(u)(1-\Theta(u))\,\mathrm{d}u
+ \frac{\rho\mu c_{a}^{2}}{\gamma} \int_{0}^{w} (1-\Theta(u))^{2}\,\mathrm{d}u,
\]
i.e.,
\begin{equation}
\mathbb{P}[\tilde{X}(\infty)>a] \approx 1-\Phi\Big(\frac{a}{\hat{\sigma}_{x}}\Big)\quad\mbox{for }a\in\mathbb{R}.
\label{eq:X-tilde-tail}
\end{equation}
Hence, the steady-state number of customers approximately follows a Gaussian
distribution with mean $n+q$ and variance
\begin{equation}
\sigma^{2}_{x} =
\frac{n\mu(c_{a}^{2}+\rho c_{s}^{2}+\rho-1)} {2\rho^{2} f_{\Theta}(w)}
+ n\rho \mu\int_{0}^{w} \Theta(u)(1-\Theta(u))\,\mathrm{d}u
+ n\rho \mu c_{a}^{2} \int_{0}^{w} (1-\Theta(u))^{2}\,\mathrm{d}u.
\label{eq:variance-queue}
\end{equation}

\section{Limit theorems}
\label{sec:Limits}

We present the underlying theorems of the diffusion model in this section. To
formulate the ED regime, let us consider a sequence of $\mbox{G}/ \mbox{GI}/ n +
\mbox{GI}$ queues indexed by the number of servers. We do not require the
arrival processes to be renewal, but simultaneous arrival of two or more
customers is not allowed. In each queue, the number of initial customers, the
arrival process, the sequence of service times, and the sequence of patience
times are mutually independent. All these queues have the same traffic intensity
$\rho>1$ and the same service time distribution. Since the service rate $\mu$ is
identical in all systems, the arrival rate of the $n$th system is $
\lambda_{n}=n\rho\mu $. Assume that the mean patience time goes to infinity as
$n$ goes large, i.e.,
\begin{equation}
\gamma_{n}\rightarrow\infty\quad\mbox{as }n\rightarrow\infty.
\label{eq:gamma_n}
\end{equation}
We do \emph{not} require any assumption on the increasing rate of $ \gamma_{n} $
towards infinity. Because the patience time distribution changes with $ n $, it
is necessary to define a normalized patience time distribution for all queues.
Let $ H $ be the distribution function of a nonnegative random variable with
mean one, i.e., $ \int_{0}^{\infty} u\,\mathrm{d}H(u) = 1 $. Assume that $ H $
is absolutely continuous with a bounded, strictly positive density function $
f_{H} $, i.e., there exists some $ \kappa > 0 $ such that
\begin{equation}
0 < f_{H}(u) < \kappa \quad \mbox{for }u\geq 0.
\label{eq:bounded-density}
\end{equation}
Using this normalized distribution, we define the distribution function of
patience times in the $ n $th system by
\begin{equation}
\Theta_{n}(u) = H\Big(\frac{u}{\gamma_{n}}\Big) \quad\mbox{for } u \geq 0.
\label{eq:Theta-n}
\end{equation}
Clearly, the mean patience time in the $ n $th system is equal to $ \gamma_{n}
$. By \eqref{eq:mean-virtual}, the mean virtual waiting time in the $ n $th
system is $ w_{n} = \Theta_{n}^{-1}((\rho-1)/\rho) $. We may thus define the
normalized mean virtual waiting time by
\[
\bar{w} = H^{-1}\Big( \frac{\rho-1}{\rho}\Big),
\]
which satisfies $ \bar{w} = w_{n}/\gamma_{n} $ for all $ n\in\mathbb{N} $.

Let $F$ be the distribution function of service times. We impose a mild
regularity condition on $F$, which is
\begin{equation}
\limsup_{u\downarrow 0}\frac{1}{u}(F(u)-F(0))<\infty,
\label{eq:conditionF}
\end{equation}
and assume that the third moment of $F$ is finite, i.e.,
\begin{equation}
\int_{0}^{\infty}u^{3}\,\mathrm{d}F(u)<\infty.
\label{eq:m3}
\end{equation}
Let $ F_{e} $ be the equilibrium distribution of $F$, given by
\[
F_{e}(t)=\mu\int_{0}^{t}(1-F(u))\,\mathrm{d}u\quad\mbox{for }t\geq0\mbox{.}
\]
We assign service times to customers according to the following procedure. Let
$\{\xi_{j,k}:j,k\in\mathbb{N}\}$ be a double sequence of independent nonnegative
random variables. For each $j\in\mathbb{N}$, $\xi_{j,1}$ follows distribution
$F_{e}$ and $\xi_{j,k}$ follows distribution $F$ for $k\geq2$. In the $n$th
system, assume that all $n$ servers are busy at time zero. For $j=1,\ldots,n$,
$\xi_{j,1}$ is assigned to the initial customer served by the $j$th server as
the residual service time at time zero. For $k\geq2$, $\xi_{j,k}$ is the service
time of the $k$th customer served by the $j$th server. By this assignment, for
all $j,k\in\mathbb{N}$, the $k$th service time by the $j$th server is identical
in all systems that have at least $j$ servers.

Let $E_{n}(t)$ be the number of arrivals in the $n$th system during time
interval $(0,t]$. Define the diffusion-scaled arrival process $\tilde{E}_{n}$ by
\[
\tilde{E}_{n}(t)=\frac{1}{\sqrt{n\gamma_{n}}}(E_{n}(\gamma_{n}t)-\lambda_{n}\gamma_{n}t).
\]
Let $N$ be a renewal process whose interrenewal times have mean one and variance
$c_{a}^{2}$. If $E_{n}$ is a renewal process with $E_{n}(t)=N(\lambda_{n}t)$, it
follows from the FCLT for renewal processes that
\begin{equation}
\tilde{E}_{n}\Rightarrow\hat{E}\quad\mbox{as }n\rightarrow\infty\mbox{,}
\label{eq:E}
\end{equation}
where $\hat{E}$ is a driftless Brownian motion with $\hat{E}(0)=0$ and variance
$\rho\mu c_{a}^{2}$. To allow for more general arrival processes, we take the
convergence in \eqref{eq:E} as an assumption rather than require each $E_{n}$ to
be a renewal process.

Let $W_{n}(t)$ be the virtual waiting time at $ t $ in the $n$th system, whose
diffusion-scaled version is
\[
\tilde{W}_{n}(t) =
\sqrt{\frac{n}{\gamma_{n}}}(W_{n}(\gamma_{n}t)-\gamma_{n}\bar{w}).
\]
Assume that there exists a random variable $ \hat{W}(0) $ such that
\begin{equation}
\tilde{W}_{n}(0)\Rightarrow \hat{W}(0)\quad\mbox{as }n\rightarrow\infty.
\label{eq:initial}
\end{equation}
The first theorem states the diffusion limit for the virtual waiting time
process in the ED regime.
\begin{theorem}
\label{theorem:many-server-virtual}
Assume that the sequence of $\mbox{G}/\mbox{GI}/n+\mbox{GI}$ queues described
above has a common traffic intensity $ \rho > 1 $ and satisfies conditions
\eqref{eq:gamma_n}--\eqref{eq:initial}. Then,
\[
\tilde{W}_{n}\Rightarrow\hat{W}\quad\mbox{as }n\rightarrow\infty,
\]
where $ \hat{W} $ is the OU process given by the following stochastic
differential equation
\begin{equation}
\hat{W}(t) = \hat{W}(0)+\hat{M}(t) - \rho f_{H}(\bar{w})\int_{0}^{t}
\hat{W}(u)\,\mathrm{d}u \quad \mbox{for }t\geq 0.
\label{eq:OU-limit}
\end{equation}
Here, $ \hat{M} $ is a driftless
Brownian motion with $ \hat{M}(0) = 0 $ and
variance $ (c_{a}^{2} + \rho c_{s}^{2} + \rho -1)/(\rho\mu)  $.
\end{theorem}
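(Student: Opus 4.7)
The plan is to obtain a sample-path integral equation for the diffusion-scaled virtual waiting time $\tilde W_n$, identify its noise term via Theorem \ref{theorem:FCLT} and assumption \eqref{eq:E}, extract a linear drift by linearizing the patience distribution around $w_n$, and conclude by a continuous mapping argument as in the scheme of \citet{HuangETAL14}. First, I would derive a pathwise identity from FCFS bookkeeping: a virtual arrival at $t$ enters service at $t + W_n(t)$ precisely when the cumulative service completions in $(t, t+W_n(t)]$ catch up with the customers ahead of it, net of abandonments, i.e.
\begin{equation*}
D_n\bigl(\gamma_n t + W_n(\gamma_n t)\bigr) - D_n(\gamma_n t) = Q_n(\gamma_n t) - G_n\bigl(\gamma_n t, W_n(\gamma_n t)\bigr),
\end{equation*}
where $D_n$ is the service completion process, $Q_n$ the queue length, and $G_n(s,u)$ the number of customers present at time $s$ who abandon by $s+u$. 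Centering around the fluid equilibrium $w_n = \gamma_n\bar w$ and applying the space-time scaling turns this identity into an approximate integral equation for $\tilde W_n$.

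To handle $D_n$ under a general service distribution I would pass to a perturbed system in which all $n$ servers are assumed busy at every time, so the service completion process is exactly the superposition of $n$ i.i.d.\ stationary renewal processes (stationarity comes from $\xi_{j,1}\sim F_e$). A preparatory lemma shows that in the ED regime idleness is asymptotically negligible, so the perturbed and original systems share the same diffusion limit. Theorem \ref{theorem:FCLT} then delivers a Brownian limit for the scaled service completion process with variance $\rho\mu c_s^2$; together with \eqref{eq:E} for the arrivals and a standard FCLT for the abandonment indicators (conditionally binomial given the waiting times), this produces a driftless Brownian driver $\hat M_n \Rightarrow \hat M$ whose variance aggregates to $(c_a^2 + \rho c_s^2 + \rho -1)/(\rho\mu)$.

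The drift arises from linearizing the patience distribution. Since $\Theta_n(u)=H(u/\gamma_n)$, we have $\Theta_n'(w_n) = f_H(\bar w)/\gamma_n$, so that the excess abandonment corresponding to a fluctuation $\tilde W_n(t)$ contributes, after the joint scaling, exactly $\rho f_H(\bar w)\int_0^t \tilde W_n(u)\,\mathrm{d}u$ to the evolution; the linearization error is controlled by \eqref{eq:bounded-density} and by the third-moment assumption \eqref{eq:m3}. Collecting all pieces yields, uniformly on compact time intervals,
\begin{equation*}
\tilde W_n(t) = \tilde W_n(0) + \hat M_n(t) - \rho f_H(\bar w)\int_0^t \tilde W_n(u)\,\mathrm{d}u + o_{\mathbb P}(1),
\end{equation*}
and the continuous mapping theorem applied to the Lipschitz map $y\mapsto y - \rho f_H(\bar w)\int_0^{\cdot} y(u)\,\mathrm{d}u$ delivers $\tilde W_n \Rightarrow \hat W$, the unique strong solution of \eqref{eq:OU-limit}.

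The hard part will be justifying the reduction to the perturbed system at the diffusion scale and verifying the hypotheses of Theorem \ref{theorem:FCLT} in the queueing context, because the service initiation times of customers are determined endogenously by the dynamics of $W_n$ rather than by the clean renewal structure of the perturbed system. Secondary obstacles are (i) the linearization of $\Theta_n$, which is uniform only over compact sets in the $\tilde W_n$-scale and therefore requires an a priori stochastic boundedness estimate for $\tilde W_n$ supplied by the mean-reversion from $\rho>1$ and $f_H>0$, and (ii) $C$-tightness of $\tilde W_n$ under only the minimal assumption $\gamma_n\to\infty$ with no prescribed rate, which forces the estimates to depend on $\gamma_n$ in a uniform fashion.
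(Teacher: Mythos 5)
Your proposal follows essentially the same route as the paper's proof: a perturbed always-busy system whose departure process is exactly the superposition of $n$ stationary renewal processes (handled by Theorem~\ref{theorem:FCLT}), asymptotic equivalence with the original queue established through a fluid limit, linearization of the patience distribution at the equilibrium waiting time, an a priori stochastic boundedness estimate for the diffusion-scaled process, and a continuous-mapping argument in the style of \citet{HuangETAL14}. Two bookkeeping slips worth fixing: the Brownian limit of the space-time scaled service completion process has variance $\mu c_s^2$, not $\rho\mu c_s^2$ (the factor $\rho$ in the aggregate variance arises only because the service term enters with weight $1/\mu$ while the arrival term enters with $1/(\rho\mu)$), and the third-moment condition \eqref{eq:m3} is needed for the FCLT itself (finiteness of the variance of the equilibrium distribution), whereas the linearization error is controlled by \eqref{eq:bounded-density}, the fluid limit, and a Gronwall-type bound.
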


\begin{remark}
Let $ h $ be the hazard rate function of $ H $, i.e., $ h(u) = f_{H}(u)/(1-H(u))
$ for $ u\geq 0 $. Because $ H(\bar{w}) = (\rho-1)/\rho $, we have $
h(\bar{w}) = \rho f_{H}(\bar{w})$. The diffusion limit depends on the
interarrival and service time distributions through their first two moments, and
depends on the normalized patience time distribution through the hazard rate at
$ \bar{w} $. This is because by centering and space-time scaling, the
arrival and service completion processes are replaced by Brownian motions in
the limit process. Since the virtual waiting time process fluctuates around the
equilibrium level, the influence on the scaled abandonment process is mostly
dictated by the normalized patience hazard rate at $ \bar{w} $.
\end{remark}

The mean queue length in the $ n $th system can be computed by
\begin{equation}
q_{n} = \int_{0}^{w_{n}} \lambda_{n} (1-\Theta_{n}(u))\,\mathrm{d}u
= n\gamma_{n}\rho\mu \int_{0}^{\bar{w}} (1-H(u))\,\mathrm{d}u.
\label{eq:qn}
\end{equation}
Let $X_{n}(t)$ be the number of customers at time $t\geq0$ in the $n$th system.
Then, the diffusion-scaled queue length process is given by
\begin{equation}
\tilde{X}_{n}(t)=\frac{1}{\sqrt{n\gamma_{n}}}(X_{n}(\gamma_{n}t)-n-q_{n}).
\label{eq:tilde-xn}
\end{equation}
Put
\begin{equation}
\bar{W}_{n}(t) = \frac{1}{\gamma_{n}}W_{n}(\gamma_{n}t),
\label{eq:w-bar}
\end{equation}
which is the virtual waiting time at $ t $ in the $ n $th  time-scaled system.
For a given $ t\geq 0 $, the second theorem concerns the limit of the
diffusion-scaled queue length at time $ t+\bar{W}_{n}(t) $.
\begin{theorem}
\label{theorem:many-server}
Under the conditions of Theorem~\ref{theorem:many-server-virtual}, for any fixed
$ t \geq 0 $,
\[
\tilde{X}_{n}(t+\bar{W}_{n}(t)) \Rightarrow
\mu\hat{W}(t)+ \hat{G}(t) \quad\mbox{as }n\rightarrow\infty,
\]
where $\hat{W}$ is the OU process defined by \eqref{eq:OU-limit} and $
\hat{G}(t) $ is a Gaussian random variable with mean zero and variance
\[
\hat{\sigma}_{g}^{2} = \rho\mu \int_{0}^{\bar{w}} H(u)(1-H(u))\,\mathrm{d}u
+\rho\mu c_{a}^{2} \int_{0}^{\bar{w}} (1-H(u))^{2}\,\mathrm{d}u.
\]
In addition, $ \hat{W}(t) $ and $ \hat{G}(t) $ are mutually independent.
\end{theorem}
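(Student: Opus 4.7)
The plan is to leverage Theorem~\ref{theorem:many-server-virtual} and evaluate the queue length at the exact instant the hypothetical customer enters service. By the definition of $W_n$, at time $s_n := \gamma_n t + W_n(\gamma_n t)$ every customer who was present at or before $\gamma_n t$ has either entered service or abandoned; combined with $\rho>1$ and the fact that $W_n(\gamma_n t)/\gamma_n$ is asymptotically bounded below by $\bar w>0$ (a consequence of Theorem~\ref{theorem:many-server-virtual}), all $n$ servers are busy at $s_n$ with probability tending to one. Consequently,
\[
X_n(s_n) - n = Q_n(s_n) + o_p\bigl(\sqrt{n\gamma_n}\bigr), \qquad Q_n(s_n) = \int_{\gamma_n t}^{s_n}\mathbf{1}\{\tau(u) > s_n - u\}\,\mathrm{d}E_n(u),
\]
where $\tau(u)$ denotes the patience of the customer arriving at $u$. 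I would then decompose $Q_n(s_n) - q_n$ around its conditional mean given $W_n(\gamma_n t)$. Writing $g_n(u) = 1 - \Theta_n(s_n - u)$, set $Q_n(s_n) - q_n = D_n + L_n + M_n$, where
\begin{align*}
D_n &= \lambda_n\int_0^{W_n(\gamma_n t)}(1-\Theta_n(v))\,\mathrm{d}v - q_n,\\
L_n &= \int_{\gamma_n t}^{s_n}g_n(u)\,\mathrm{d}(E_n(u)-\lambda_n u),\\
M_n &= \int_{\gamma_n t}^{s_n}\bigl(\mathbf{1}\{\tau(u)>s_n-u\} - g_n(u)\bigr)\,\mathrm{d}E_n(u).
\end{align*}

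For $D_n$, a first-order Taylor expansion of $v\mapsto\int_0^v(1-H(y))\,\mathrm{d}y$ around $\bar w$, together with the identities $W_n(\gamma_n t)-\gamma_n\bar w = \sqrt{\gamma_n/n}\,\tilde W_n(t)$, $\lambda_n = n\rho\mu$, and $1-H(\bar w) = 1/\rho$, yields $D_n/\sqrt{n\gamma_n} = \mu\tilde W_n(t) + o_p(1)$, which converges to $\mu\hat W(t)$ by Theorem~\ref{theorem:many-server-virtual}. For $L_n$, the change of variable $u = \gamma_n v$ converts $L_n/\sqrt{n\gamma_n}$ into $\int_t^{t+\bar W_n(t)}\bigl(1 - H(t+\bar W_n(t)-v)\bigr)\,\mathrm{d}\tilde E_n(v)$; by \eqref{eq:E}, the probability convergence $\bar W_n(t)\to\bar w$ (from Theorem~\ref{theorem:many-server-virtual}), and the continuous mapping theorem, this tends to $\int_0^{\bar w}(1-H(y))\,\mathrm{d}\hat E(t+\bar w - y)$, a centered Gaussian with variance $\rho\mu c_a^2\int_0^{\bar w}(1-H(y))^2\,\mathrm{d}y$. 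For $M_n$, conditioning on the arrival primitives makes the integrand a triangular array of centered Bernoulli marks with conditional variance $\Theta_n(s_n-u)(1-\Theta_n(s_n-u))$; after the same time rescaling, a Lindeberg-type CLT contributes $\rho\mu\int_0^{\bar w}H(y)(1-H(y))\,\mathrm{d}y$ to $\hat\sigma_g^2$, while $M_n$ is asymptotically independent of $L_n$ because $L_n$ is determined by the arrival epochs alone whereas $M_n$ is driven by the independent patience marks.

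The main obstacle is the joint convergence with the claimed independence between $\hat W(t)$ and $\hat G(t)$. Both $L_n$ and $M_n$ are built from the arrival and patience primitives on the post-$\gamma_n t$ interval $(\gamma_n t, s_n]$, whereas $\tilde W_n(t)$ is measurable with respect to the primitives on $[0,\gamma_n t]$; the random upper limit $s_n$ is the sole remaining coupling. I would exploit independent increments of the limiting Brownian motion $\hat E$, the iid structure of the patience sequence, and the convergence $\bar W_n(t)\to\bar w$ in probability to replace $s_n$ by $\gamma_n(t+\bar w)$ up to negligible error and thereby decouple the three pieces. A final application of Slutsky's theorem combines $D_n/\sqrt{n\gamma_n} + L_n/\sqrt{n\gamma_n} + M_n/\sqrt{n\gamma_n}$ into the stated limit $\mu\hat W(t) + \hat G(t)$ with $\hat W(t)$ independent of $\hat G(t)$.
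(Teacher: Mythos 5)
Your proposal is correct and takes essentially the same route as the paper: your exact decomposition $Q_n(s_n)-q_n = D_n+L_n+M_n$ coincides, up to a regrouping of the asymptotically negligible Taylor and endpoint error terms, with the paper's decomposition $\tilde{X}_n(t+\bar{W}_n(t))=\mu\tilde{W}_n(t)+\tilde{G}_n'(t)+\tilde{G}_n''(t)+\tilde{Y}_n(t)+\tilde{Y}_n'(t)$, and the paper establishes the independence of $\hat{W}(t)$ and $\hat{G}(t)$ exactly as you propose (Lemma~\ref{lemma:WGG}): prelimit independence of the post-$\gamma_n t$ patience marks, together with independent increments of the limiting Brownian motion $\hat{E}$ after the random endpoint $s_n$ is replaced by $\gamma_n(t+\bar{w})$. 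One imprecision is your claim that $\tilde{W}_n(t)$ is measurable with respect to the primitives on $[0,\gamma_n t]$ --- this is not literally true, since the virtual waiting time looks forward at the service completion processes and, for the paper's general (non-renewal) arrival processes, pre- and post-$\gamma_n t$ arrival increments are themselves coupled --- but it is harmless here, because the service primitives never enter $L_n$ or $M_n$ and the decoupling you actually invoke operates at the level of the Brownian limit, which is precisely the paper's mechanism.
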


\begin{remark}
Let $ t $ go to infinity. Then, $\mu\hat{W}(t)+\hat{G}(t)$ converges in
distribution to a Gaussian random variable with mean zero and variance
\[
\hat{\sigma}^{2}_{x} = \frac{\mu(c_{a}^{2}+\rho c_{s}^{2}+\rho-1)} {2\rho^{2}
	f_{H}(\bar{w})} + \rho\mu\int_{0}^{\bar{w}} H(u)(1-H(u))\,\mathrm{d}u +
	\rho\mu
c_{a}^{2} \int_{0}^{\bar{w}} (1-H(u))^{2}\,\mathrm{d}u.
\]
When $ n $ is large, the distribution of $ \tilde{X}_{n}(\infty) $ should be
close to this Gaussian distribution, which leads to formula
\eqref{eq:X-tilde-tail}.
\end{remark}

The third theorem plays an essential role in proving the previous two theorems.
This theorem is an FCLT for the superposition of time-scaled, stationary renewal
processes, which are defined as follows. For $t\geq0$ and $j\in\mathbb{N}$, let
\[
N_{j}(t)=\max\{k\geq 0:\xi_{j,1}+\cdots+\xi_{j,k}\leq t\},
\]
where $\{\xi_{j,k}:j,k\in\mathbb{N}\}$ is the double sequence of random
variables defined earlier. If $\xi_{j,1} > t$, we take $N_{j}(t)=0$ by
convention. Because $ \xi_{j,1} $ follows distribution $ F_{e} $ and $ \xi_{j,k}
$ follows distribution $ F $ for $ k\geq 2 $, $\{N_{j}:j \in \mathbb{N}\}$ is a
sequence of iid stationary renewal processes.

\begin{theorem}
\label{theorem:FCLT}
Let $\{N_{j}:j\in\mathbb{N}\}$ be a sequence of iid stationary renewal
processes, i.e., the delay distribution $F_{e}$ of each renewal process is the
equilibrium distribution of the interrenewal distribution $F$. Assume that $F$,
having mean $1/\mu$ and squared coefficient of variation $ c_{s}^{2} $,
satisfies \eqref{eq:conditionF} and \eqref{eq:m3}. Let
\begin{equation}
B_{n}(t)=\sum_{j=1}^{n}N_{j}(t)
\label{eq:B}
\end{equation}
and $\{\gamma_{n}:n\in\mathbb{N}\}$ be a sequence of positive numbers such that
$\gamma_{n}\rightarrow\infty$ as $n\rightarrow\infty$. Then,
\[
\tilde{B}_{n}\Rightarrow\hat{B}\quad\mbox{as }n\rightarrow\infty\mbox{,}
\]
where
\begin{equation}
\tilde{B}_{n}(t)=\frac{1}{\sqrt{n\gamma_{n}}}(B_{n}(\gamma_{n}t)-n\mu\gamma_{n}t)
\label{eq:B_tilde}
\end{equation}
and $\hat{B}$ is a driftless Brownian motion with $\hat{B}(0)=0$ and variance
$\mu c_{s}^{2}$.
\end{theorem}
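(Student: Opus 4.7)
The plan is to decompose the joint space-time scaling into a time renormalization of each individual stationary renewal process and a CLT-type averaging over the $n$ independent copies. Writing
\[
\tilde{B}_n(t) = \frac{1}{\sqrt{n}}\sum_{j=1}^{n} Y_j^n(t), \qquad Y_j^n(t) = \frac{N_j(\gamma_n t) - \mu \gamma_n t}{\sqrt{\gamma_n}},
\]
exhibits $\tilde{B}_n$ as an iid sum of centered, diffusion-scaled stationary renewal counts. Each $Y_j^n$ is individually asymptotic to $c_s\sqrt{\mu}\,W_j$ as $\gamma_n\to\infty$ by the renewal FCLT, while the further division by $\sqrt{n}$ corresponds to a CLT-type averaging over these processes. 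I would prove the theorem in the classical two steps: finite-dimensional convergence by a Lindeberg--Feller triangular-array CLT, followed by tightness via moment bounds on increments, and then identification of the limit as a continuous Gaussian process with Brownian covariance.

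For the finite-dimensional convergence, apply the Cram\'er--Wold device. For any $0\le t_1<\cdots<t_k$ and $a_1,\ldots,a_k\in\mathbb{R}$, the scalar sum $\sum_i a_i\tilde{B}_n(t_i) = n^{-1/2}\sum_j Z_{n,j}$ with $Z_{n,j}=\sum_i a_i Y_j^n(t_i)$ is an iid sum of mean-zero variables. Using stationarity of $N_j$ together with the renewal variance expansion $\operatorname{Var}(N_j(s)) = \mu c_s^2 s + O(1)$ (valid under~\eqref{eq:m3}), the variance of $Z_{n,j}$ tends to $\sum_{i,l} a_i a_l\,\mu c_s^2\,(t_i\wedge t_l)$, which is exactly the covariance of the corresponding linear combination of $\hat{B}$. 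The Lindeberg condition reduces to a uniform renewal moment bound $\mathbb{E}\,|N_j(s)-\mu s|^{2+\delta} = O(s^{1+\delta/2})$ for some $\delta\in(0,1]$; this is standard for stationary renewal processes with a finite $(2+\delta)$-th interrenewal moment, which is supplied by~\eqref{eq:m3}.

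For tightness in $(\mathbb{D}, J_1)$, note that the jumps of $\tilde{B}_n$ have size $(n\gamma_n)^{-1/2}\to 0$, so any subsequential limit is concentrated on continuous paths. It then suffices to verify a Chentsov--Billingsley-type moment condition on increments, for instance via Rosenthal's inequality applied to the iid sum:
\[
\mathbb{E}\bigl[\bigl(\tilde{B}_n(t+h)-\tilde{B}_n(t)\bigr)^{4}\bigr] \;\le\; C\bigl(\operatorname{Var}(Y_1^n(t+h)-Y_1^n(t))\bigr)^{2} + \frac{C}{n}\,\mathbb{E}\bigl[\bigl(Y_1^n(t+h)-Y_1^n(t)\bigr)^{4}\bigr].
\]
A short-interval renewal bound $\operatorname{Var}(N_j(h)) \le Ch$ uniformly as $h\downarrow 0$ makes the first term $O(h^2)$, while the $1/n$ prefactor damps the fourth-moment contribution. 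Combining fidi convergence with tightness, and observing that the limit is a centered continuous Gaussian process with covariance $\mu c_s^2(s\wedge t)$, identifies it as the Brownian motion~$\hat{B}$ with variance $\mu c_s^2$.

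The hard part will be to assemble uniform renewal-theoretic moment bounds on $N_j(s) - \mu s$ with the correct growth in~$s$, valid simultaneously on the diverging scale $s=\gamma_n t$ and on the short-interval scale $h\downarrow 0$. On the diverging scale, \eqref{eq:m3} together with Lorden- and Gut-type inequalities for stopped random walks delivers both the variance expansion and the higher-moment control needed for Lindeberg. On the short-interval scale, \eqref{eq:conditionF} is precisely what prevents $F$ from placing too much mass near~$0$, and thereby rules out pathological clustering of renewals in tiny intervals, yielding $\operatorname{Var}(N_j(h))\le Ch$. Once these ingredients are in place, the Lindeberg--Feller theorem and the tightness criterion apply by standard arguments, and identification of the Brownian limit is immediate.
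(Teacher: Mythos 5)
Your tightness step contains a genuine gap. The claimed Chentsov-type bound $\mathbb{E}\bigl[(\tilde{B}_n(t+h)-\tilde{B}_n(t))^{4}\bigr]\le Ch^{2}$ cannot hold uniformly in $h$, because $\tilde{B}_n$ is a compensated point process with jumps of size $(n\gamma_n)^{-1/2}$. For $h$ below the mean inter-event spacing, i.e.\ $h\ll(n\mu\gamma_n)^{-1}$, the increment consists, with probability roughly $n\mu\gamma_n h$, of a single jump of magnitude $(n\gamma_n)^{-1/2}$ (the compensator contribution $\mu h\sqrt{n\gamma_n}$ is negligible on that scale), so that
\[
\mathbb{E}\bigl[(\tilde{B}_n(t+h)-\tilde{B}_n(t))^{4}\bigr]\;\approx\; n\mu\gamma_n h\cdot(n\gamma_n)^{-2}\;=\;\mu h/(n\gamma_n),
\]
which exceeds $h^{2}$ whenever $h<\mu/(n\gamma_n)$. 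Equivalently, in your Rosenthal bound the term $\tfrac{1}{n}\mathbb{E}\bigl[(Y_1^n(t+h)-Y_1^n(t))^{4}\bigr]$ is of order $h/(n\gamma_n)$ for small $h$ (it is dominated by the event of exactly one renewal), and the $1/n$ prefactor does not damp it below $h^{2}$; no choice of constant rescues the inequality. This is the generic obstruction to applying the straight moment criterion (condition (13.13) of \citet{Billingsley99}) to processes with jumps. A secondary issue in the same step: your argument needs a fourth-moment renewal estimate $\mathbb{E}[(N(s)-\mu s)^{4}]=O(s^{2})$, whereas \eqref{eq:m3} supplies only third moments of $F$, so this estimate cannot simply be cited as standard.

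The repair is to replace the fourth moment of a single increment by the product of squares of \emph{adjacent} increments, $\mathbb{E}\bigl[(\tilde{B}_n(s)-\tilde{B}_n(r))^{2}(\tilde{B}_n(t)-\tilde{B}_n(s))^{2}\bigr]\le c(t-r)^{2}$, and invoke Theorem~13.5 of \citet{Billingsley99} with condition (13.14) in place of (13.13): a single jump cannot inflate both factors, which is exactly what kills the small-$h$ pathology. This is the route the paper takes, expanding the product over the $n$ iid processes and controlling a single stationary renewal process through inequalities (7)--(8) of \citet{Whitt85}, which is precisely where \eqref{eq:conditionF} enters. Your finite-dimensional step via Cram\'er--Wold and Lindeberg--Feller is viable in principle, but it leans on stationary-renewal moment expansions ($\operatorname{Var}N(s)=\mu c_s^{2}s+O(1)$ and the $(2+\delta)$-moment growth) that need care, since the stationary delay distribution $F_e$ has one fewer finite moment than $F$; the paper avoids this machinery altogether by the residual-time identity $\tilde{B}_n(t)=-\mu\tilde{U}_n(0)+\mu\tilde{U}_n(t)+\tilde{B}'_n(t)$, observing $\mathbb{E}[\tilde{U}_n(t)^{2}]=\sigma_e^{2}/\gamma_n\to0$ (this is exactly where \eqref{eq:m3} is used), and then obtaining the limit of $\tilde{B}'_n$ from Donsker's theorem composed with the fluid limit $\bar{B}_n\to\mu e$ via the random time-change theorem.
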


\begin{remark}
To better understand Theorem~\ref{theorem:FCLT}, let us compare this result with
two other FCLTs. By the FCLT for renewal processes, $\{(N_{1}(\ell t)-\ell\mu
t)/\sqrt{\ell}:t\geq 0\}$ converges in distribution to a Brownian motion as
$\ell$ goes to infinity; see, e.g., Theorem~5.11 in \citet{ChenYao01}. Clearly,
the increments of this time-scaled renewal process become independent of its
history as the scaling factor gets large. \citet{Whitt85} proved an FCLT for the
superposition of stationary renewal processes. It states that
$\{\sum_{j=1}^{n}(N_{j}(t)-\mu t)/\sqrt{n}:t\geq 0\}$ converges in distribution
to a zero-mean Gaussian process that has stationary increments and continuous
sample paths. In this FCLT, the superposition process is scaled in space only.
The covariance function of each stationary renewal process is retained in the
limit Gaussian process, which, in general, is \emph{not} a Brownian motion; see
Theorem~2 in \citet{Whitt85}. In our theorem, each superposition process is
scaled in both space and time. Squeezing the time scale erases the dependence of
the increments of $\tilde{B}_{n}$ to the history. The limit of these space-time
scaled superposition processes should thus be a Gaussian process with
independent, stationary increments and continuous sample paths, which must be a
Brownian motion.
\end{remark}

In the ED regime, all servers are nearly always busy, so the service completion
process is almost identical to the superposition of many renewal processes.
Theorem~\ref{theorem:FCLT} implies that the space-time scaled service completion
process can be approximated by a Brownian motion, which allows us to bypass the
analysis of the infinite-dimensional age or residual process in proving a limit
process. Hence, by zooming out our perspective in both space and time, we may
obtain a one-dimensional diffusion model for many-server queues with a general
service time distribution.

\section{Numerical experiments and discussion}
\label{sec:Numerical}

In this section, we examine the diffusion model by numerical experiments. We
first study the influence of the mean patience time on the accuracy of
approximation, and then use the diffusion model to solve a staffing problem.

\begin{table}[t]
\caption{Performance estimates for the $\mbox{M}/\mbox{GI}/100+\mbox{M}$ queue
with $\mu=1.0$ and $\rho=1.2$; simulation results (with $95\%$ confidence
intervals) are compared with approximate results (in \emph{italics}).}
\centering
\footnotesize{
\begin{tabular}{l|lllll}
\hline

\multicolumn{1}{c|}{} & \multicolumn{1}{c}{}
& \multicolumn{2}{c}{Virtual waiting time}
& \multicolumn{2} {c}{Queue length} \T \tabularnewline

\multicolumn{1}{l|}{Patience} & \multicolumn{1}{l}{Abd. fraction}
& \multicolumn{1}{l}{Mean} & \multicolumn{1}{l}{Variance}
& \multicolumn{1}{l}{Mean} & \multicolumn{1}{l}{Variance} \B \tabularnewline

\hline

\multicolumn{1}{c|}{} & \multicolumn{5}{c}{$\mbox{M}/\mbox{D}/100+\mbox{M}$}
\T \B \tabularnewline \cline{2-6}

$\gamma=1.0$
& $0.1668$  & $0.1851$  & $0.005322$ & $20.02$  & $73.11$  \T \tabularnewline
& $\pm0.000020$  & $\pm0.000028$  & $\pm0.0000030$ & $\pm0.0034$  & $\pm0.038$
\tabularnewline

& $\emph{0.1667}$  & $\emph{0.1823}$  & $\emph{0.005000}$ & $\emph{20.00}$
& $\emph{70.00}$  \B \tabularnewline

$\gamma=5.0$
& $0.1667$  & $0.9142$  & $0.02639$ & $99.99$  & $364.1$  \T \tabularnewline
& $\pm0.000021$  & $\pm0.00014$  & $\pm0.00042$ & $\pm0.017$  & $\pm4.3$
\tabularnewline

& $\emph{0.1667}$  & $\emph{0.9116}$  & $\emph{0.02500}$ & $\emph{100.0}$
& $\emph{350.0}$  \B \tabularnewline

$\gamma=10$
& $0.1667$  & $1.826$  & $0.05487$ & $200.0$  & $749.2$  \T \tabularnewline
& $\pm0.000021$  & $\pm0.00030$  & $\pm0.000086$ & $\pm0.035$  & $\pm1.2$
\tabularnewline

& $\emph{0.1667}$  & $\emph{1.823}$  & $\emph{0.05000}$ & $\emph{200.0}$
& $\emph{700.0}$  \B \tabularnewline

\hline

\multicolumn{1}{c|}{}& \multicolumn{5}{c}{$\mbox{M}/\mbox{E}_{2}/100+\mbox{M}$}
\T \B \tabularnewline \cline{2-6}

$\gamma=1.0$
& $0.1672$  & $0.1869$  & $0.007799$ & $20.07$  & $97.08$   \T \tabularnewline
& $\pm0.000040$  & $\pm0.000055$  & $\pm0.0000033$ & $\pm0.0062$  & $\pm0.041$
\tabularnewline

& $\emph{0.1667}$  & $\emph{0.1823}$  & $\emph{0.007500}$ & $\emph{20.00}$
& $\emph{95.00}$  \B \tabularnewline

$\gamma=5.0$
& $0.1666$  & $0.9152$  & $0.03812$ & $99.97$  & $481.0$  \T \tabularnewline
& $\pm0.000043$  & $\pm0.00031$  & $\pm0.000049$ & $\pm0.035$  & $\pm0.63$
\tabularnewline

& $\emph{0.1667}$  & $\emph{0.9116}$  & $\emph{0.03750}$ & $\emph{100.0}$
& $\emph{475.0}$  \B \tabularnewline

$\gamma=10$
& $0.1666$  & $1.827$  & $0.07567$ & $199.9$  & $956.4$  \T \tabularnewline
& $\pm0.000042$  & $\pm0.00058$  & $\pm0.00015$ & $\pm0.066$  & $\pm2.0$
\tabularnewline

& $\emph{0.1667}$  & $\emph{1.823}$  & $\emph{0.07500}$ & $\emph{200.0}$
& $\emph{950.0}$  \B \tabularnewline

\hline

\multicolumn{1}{c|}{} & \multicolumn{5}{c}{$\mbox{M}/\mbox{LN}/100+\mbox{M}$}
\T \B \tabularnewline \cline{2-6}

$\gamma=1.0$
& $0.1679$  & $0.1890$  & $0.01049$ & $20.14$  & $122.2$  \T \tabularnewline
& $\pm0.000040$  & $\pm0.000052$  & $\pm0.0000043$ & $\pm0.0055$  & $\pm0.049$
\tabularnewline

& $\emph{0.1667}$  & $\emph{0.1823}$  & $\emph{0.01500}$ & $\emph{20.00}$
& $\emph{170.0}$  \B \tabularnewline

$\gamma=5.0$
& $0.1666$  & $0.9178$  & $0.06474$ & $99.97$  & $745.5$  \T \tabularnewline
& $\pm0.000043$  & $\pm0.00027$  & $\pm0.000068$ & $\pm0.029$  & $\pm0.73$
\tabularnewline

& $\emph{0.1667}$  & $\emph{0.9116}$  & $\emph{0.07500}$ & $\emph{100.0}$
& $\emph{850.0}$  \tabularnewline

$\gamma=10$
& $0.1666$  & $1.829$  & $0.1365$ & $199.9$  & $1563$  \T \tabularnewline
& $\pm0.000042$  & $\pm0.00054$  & $\pm0.00019$ & $\pm0.057$  & $\pm1.9$
\tabularnewline

& $\emph{0.1667}$  & $\emph{1.823}$  & $\emph{0.1500}$ & $\emph{200.0}$
& $\emph{1700}$  \B \tabularnewline

\hline

\end{tabular}
}
\label{table:measure100}
\end{table}

\begin{table}[t]
\caption{Tail probabilities for the steady-state virtual waiting time and queue
length in the $\mbox{M}/\mbox{GI}/ 100+\mbox{M}$ queue with $\mu=1.0$ and
$\rho=1.2$; simulation results (with $95\%$ confidence intervals) are compared
with diffusion approximations (in \emph{italics}).}
\centering
\footnotesize{
\begin{tabular}{l|llllll}
	
\hline

& \multicolumn{3}{c}{$\mathbb{P}[\tilde{W}(\infty)>a]$}
& \multicolumn{3}{c}{$\mathbb{P}[\tilde{X}(\infty)>a]$} \T \tabularnewline

\multicolumn{1}{c|}{Patience}
& \multicolumn{1}{c}{$a=0.5$} & \multicolumn{1}{c}{$a=1.0$}
& \multicolumn{1}{c}{$a=2.0$}
& \multicolumn{1}{c}{$a=0.5$} & \multicolumn{1}{c}{$a=1.0$}
& \multicolumn{1}{c}{$a=2.0$} \B \tabularnewline

\hline

\multicolumn{1}{c|}{} & \multicolumn{6}{c}{$\mbox{M}/\mbox{D}/100+\mbox{M}$}
\T \B \tabularnewline \cline{2-7}

$\gamma=1.0$
& $0.2584$  & $0.09269$  & $0.003869$  & $0.2559$  & $0.1131$  & $0.01140$
\T \tabularnewline
& $\pm0.00014$  & $\pm0.000078$  & $\pm0.000018$
& $\pm0.00014$  & $\pm0.000089$  & $\pm0.000031$  \B \tabularnewline

$\gamma=5.0$
& $0.2505$  & $0.08689$  & $0.003138$ & $0.2707$  & $0.1200$ & $0.01120$
\T \tabularnewline
& $\pm0.0019$  & $\pm0.0016$  & $\pm0.00017$
& $\pm0.0013$  & $\pm0.0013$  & $\pm0.00031$  \B \tabularnewline

$\gamma=10$
& $0.2539$  & $0.09004$  & $0.003419$ & $0.2840$  & $0.1252$ & $0.01089$
\T \tabularnewline
& $\pm0.00046$  & $\pm0.00023$  & $\pm0.000050$
& $\pm0.00049$  & $\pm0.00029$  & $\pm0.000093$  \B \tabularnewline

& $\emph{0.2398}$  & $\emph{0.07865}$ & $\emph{0.002339}$
& $\emph{0.2750}$  & $\emph{0.1160}$  & $\emph{0.008414}$  \T \B \tabularnewline

\hline

\multicolumn{1}{c|}{} &\multicolumn{6}{c}{$\mbox{M}/\mbox{E}_{2}/100+\mbox{M}$}
\T \B \tabularnewline \cline{2-7}

$\gamma=1.0$
& $0.3007$  & $0.1422$  & $0.01596$ & $0.2865$  & $0.1472$  & $0.02314$
\T \tabularnewline
& $\pm0.00023$  & $\pm0.00015$  & $\pm0.000039$
& $\pm0.00023$  & $\pm0.00015$  & $\pm0.000044$  \B \tabularnewline

$\gamma=5.0$
& $0.2884$  & $0.1302$  & $0.01215$ & $0.2972$  & $0.1523$ & $0.02261$
\T \tabularnewline
& $\pm0.00055$  & $\pm0.00039$  & $\pm0.000098$
& $\pm0.00056$  & $\pm0.00041$  & $\pm0.00014$  \B \tabularnewline

$\gamma=10$
& $0.2859$  & $0.1279$  & $0.01151$ & $0.3057$  & $0.1538$ & $0.02095$
\T \tabularnewline
& $\pm0.00073$  & $\pm0.00054$  & $\pm0.00014$
& $\pm0.00074$  & $\pm0.00059$  & $\pm0.00021$  \B \tabularnewline

& $\emph{0.2819}$  & $\emph{0.1241}$  & $\emph{0.01046}$
& $\emph{0.3040}$  & $\emph{0.1525}$  & $\emph{0.02009}$  \T \B \tabularnewline

\hline

\multicolumn{1}{c|}{} & \multicolumn{6}{c}{$\mbox{M}/\mbox{LN}/100+\mbox{M}$}
\T \B \tabularnewline \cline{2-7}

$\gamma=1.0$
& $0.3288$  & $0.1826$  & $0.03577$ & $0.3099$  & $0.1774$  & $0.03847$
\T \tabularnewline
& $\pm0.00019$  & $\pm0.00014$  & $\pm0.000048$
& $\pm0.00017$  & $\pm0.00013$  & $\pm0.000048$   \B \tabularnewline

$\gamma=5.0$
& $0.3348$  & $0.1952$  & $0.04389$ & $0.3343$  & $0.2040$ & $0.05275$
\T \tabularnewline
& $\pm0.00036$  & $\pm0.00026$  & $\pm0.00016$
& $\pm0.00035$  & $\pm0.00026$  & $\pm0.00016$  \B \tabularnewline

$\gamma=10$
& $0.3371$  & $0.1997$  & $0.04703$ & $0.3452$  & $0.2118$ & $0.05492$
\T \tabularnewline
& $\pm0.00049$  & $\pm0.00035$  & $\pm0.00024$
& $\pm0.00049$  & $\pm0.00040$  & $\pm0.00024$  \B \tabularnewline

& $\emph{0.3415}$  & $\emph{0.2071}$  & $\emph{0.05124}$
& $\emph{0.3507}$  & $\emph{0.2216}$  & $\emph{0.06252}$  \T \B \tabularnewline

\hline

\end{tabular}
}
\label{table:tail100}
\end{table}

\subsection{Influence of the mean patience time}
\label{sec:influence}

Serving as the respective scaling factors in space and time, the number of
servers and the mean patience time will affect how close the queue's performance
is to the diffusion approximation. By Theorem~\ref{theorem:many-server-virtual},
both scaling factors are required to approach infinity in order for the
diffusion-scaled virtual waiting time process to converge. A diffusion model can
generally produce satisfactory performance approximations when there are at
least tens of servers (see, e.g., \citet{GarnettETAL02}, \citet{DaiHe13}, and
\citet{HuangETAL14}). However, it is not immediately clear how large the mean
patience time should be in order for the proposed diffusion model to be
sufficiently accurate. We would thus evaluate the influence of the mean patience
time on the accuracy of approximation.

Assume that the queue has a Poisson arrival process with rate $ \lambda=120 $
and 100 servers with service rate $ \mu=1.0 $, so the traffic intensity is $
\rho=1.2 $. The patience time distribution is exponential with mean $ \gamma =
1.0 $, $ 5.0 $, or $ 10 $. The service time distribution may be deterministic,
Erlang (with two stages), or log-normal, denoted by $\mbox{D}$, $\mbox{E}_2$,
and $\mbox{LN}$, respectively. With $c_{s}^{2}=0$ and $0.5$, the deterministic
and Erlang distributions are used to represent scenarios where service times
have small variability. \citet{BrownETAL05} reported that a log-normal
distribution provides a good fit for service time data from an Israeli call
center. We also test such a distribution with $c_{s}^{2}=2.0$, which yields more
variable service times.

The estimates of several performance measures, including the fraction of
abandoning customers, the mean and variance of the steady-state virtual waiting
time, and the mean and variance of the steady-state queue length, are listed in
Table~\ref{table:measure100}. We use \eqref{eq:abd-fraction},
\eqref{eq:mean-virtual}, \eqref{eq:variance-virtual}, \eqref{eq:mean-queue}, and
\eqref{eq:variance-queue} to compute the respective approximate results. The
formulas for the fraction of abandoning customers, the mean virtual waiting
time, and the mean queue length are identical to those obtained from the fluid
model by \citet{Whitt06}. These fluid approximations agree with the simulation
results very well, which is consistent with the conclusion drawn by
\citet{Whitt06}: The fluid model is able to produce accurate approximations for
mean performance measures in the ED regime.

The fluid model, however, cannot be used for estimating variances because of its
deterministic nature. We need the diffusion model for completing this task.
Theorems~\ref{theorem:many-server-virtual} and~\ref{theorem:many-server} imply
that diffusion approximations are more accurate when the mean patience time is
longer. Comparing the variance results in Table~\ref{table:measure100}, however,
we can see that an adequate diffusion approximation may \emph{not} require the
mean patience time to be large. With a mean patience time that is comparable to
the mean service time, the approximate variances are satisfactory when service
times are deterministic or follow an Erlang distribution. This observation can
be explained as follows. Because all servers are almost always busy, the service
completion process is close to the superposition of iid renewal processes. As we
discussed in the previous section, by squeezing the time scale, the increments
of the service completion process will become more and more independent of the
history. Then by Theorem~\ref{theorem:FCLT}, we may use a Brownian motion to
replace the space-time scaled service departure process to obtain the diffusion
model. If the variability in service times is \emph{not} significant, a
\emph{moderate} scaling factor in time is sufficient for the Brownian
approximation to work well. Therefore, with a deterministic or Erlang service
time distribution, the approximate variances are close to the simulation results
even for $\gamma=1.0$. A \emph{larger} scaling factor is necessary if the
variability in service times is \emph{more} considerable. When the service time
distribution is log-normal with $c_{s}^{2}=2.0$, the approximate variances are
no longer accurate for $\gamma=1.0$. In order for the increments of the
time-scaled service completion process to be sufficiently independent of the
history, the mean patience time should be at least \emph{several times longer}
than the mean service time. We can see that the approximate variances are
satisfactory for $\gamma=5.0$ and~$10$.

We also compute tail probabilities for the steady-state distributions of
diffusion-scaled virtual waiting time and queue length. Approximations by
\eqref{eq:W-tilde-tail} and \eqref{eq:X-tilde-tail} are compared with simulation
results in Table~\ref{table:tail100}, the observation from which is consistent
with what we found from Table~\ref{table:measure100}: With the deterministic or
Erlang service time distribution, the approximate distributions are satisfactory
when the mean patience time is comparable to or longer than the mean service
time; when service times have a larger variance, the mean patience time needs to
be at least several times longer than the mean service time, in order for
Gaussian approximations to be accurate.

\begin{figure}[t]
\centering
\begin{subfigure}[b]{0.49\textwidth}
\includegraphics[width=3.1in]{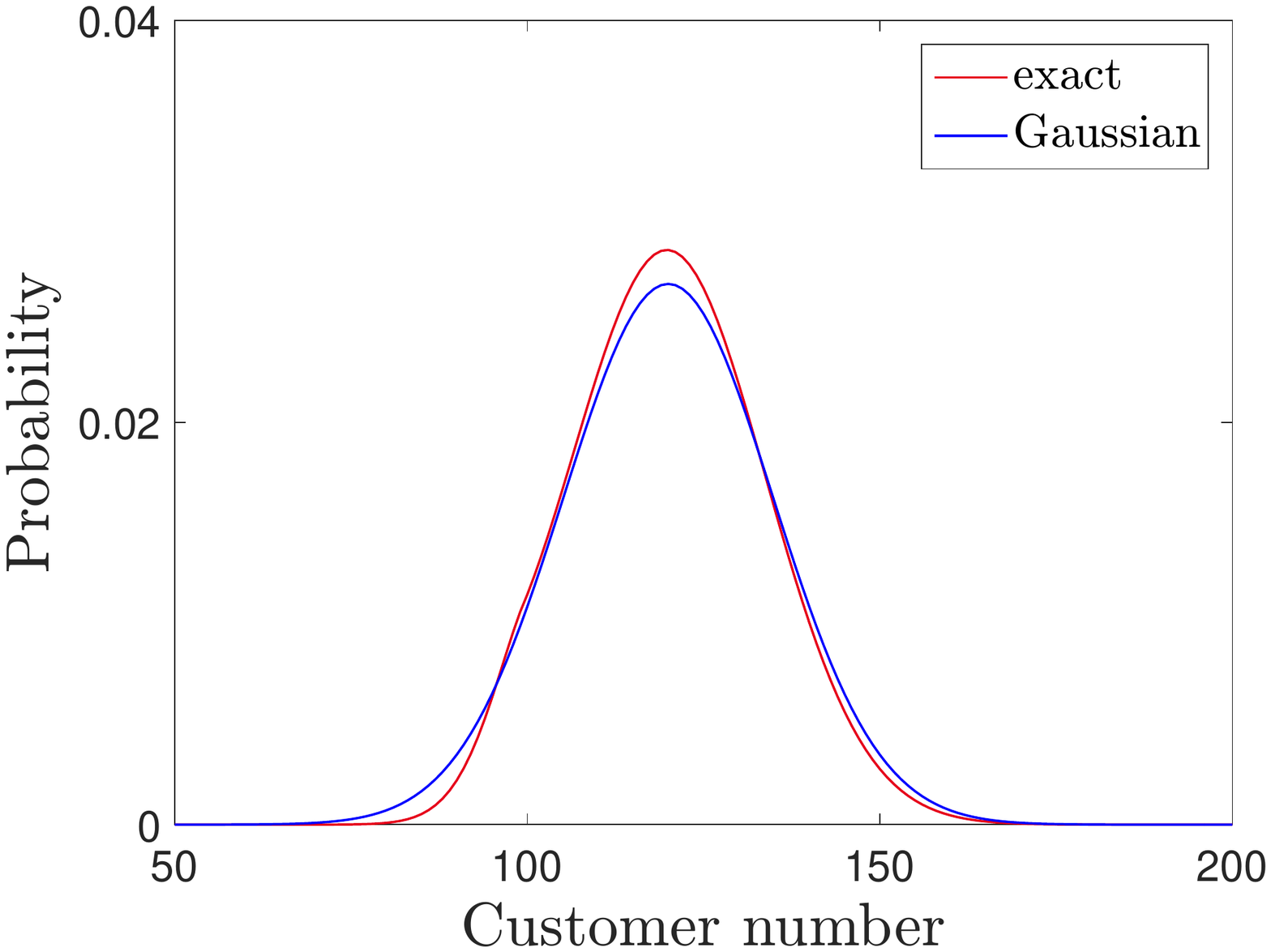}
\caption{$\gamma = 1.0$}
\label{fig:H2-gamma1}
\end{subfigure}
\begin{subfigure}[b]{0.49\textwidth}
\includegraphics[width=3.1in]{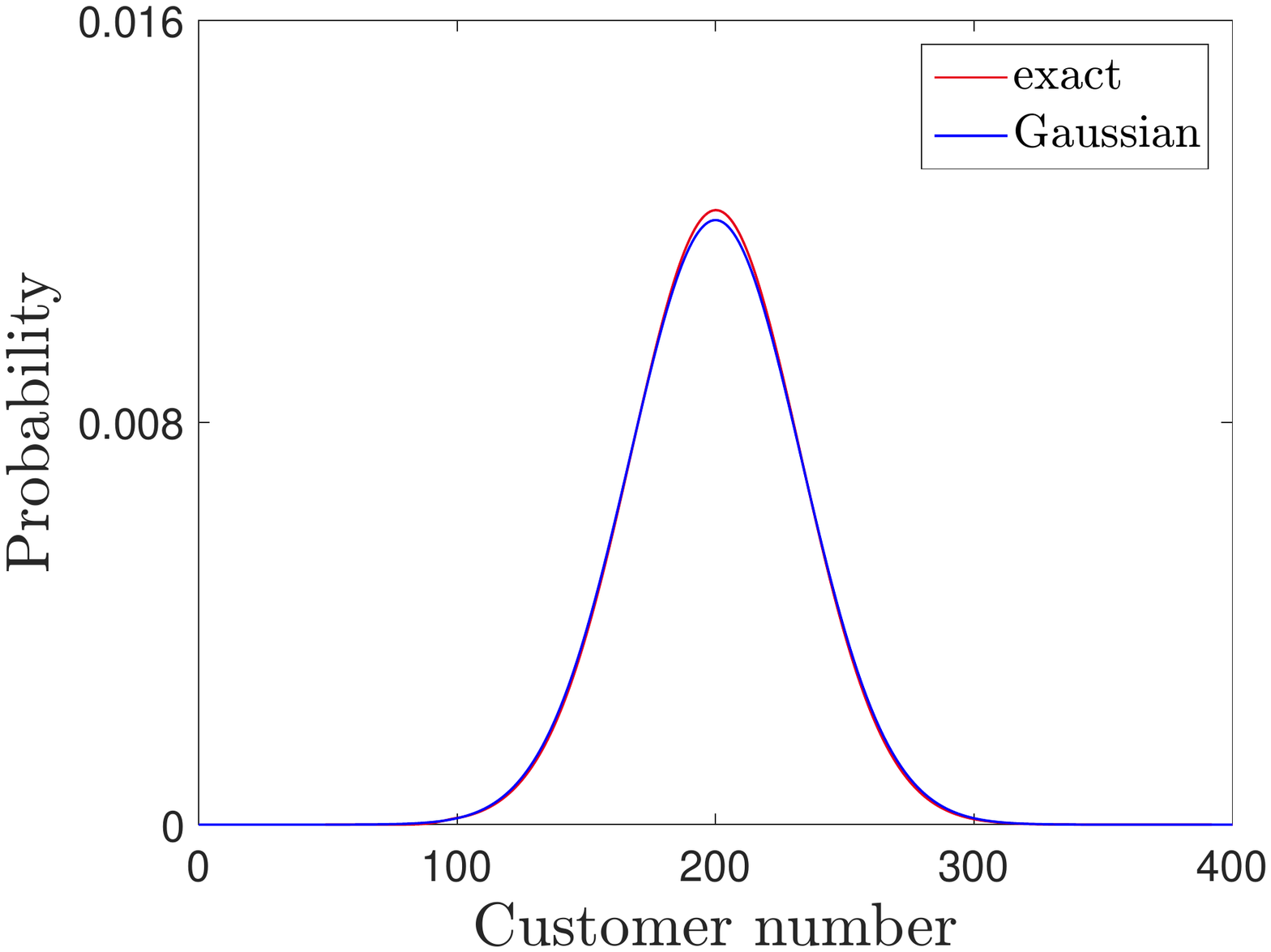}
\caption{$\gamma = 5.0$}
\label{fig:H2-gamma5}
\end{subfigure}
\caption{The steady-state distribution of the number of customers in the
$\mbox{M}/\mbox{H}_{2}/100+\mbox{M}$ queue with $\mu=1.0$, $\rho=1.2$, and
$c_{s}^{2}=3.0$; the exact distribution by the matrix-analytic method is
compared with Gaussian approximation in \eqref{eq:Gaussian-approx}.}
\label{fig:H2}
\end{figure}

To illustrate how the number of customers converges to a Gaussian random
variable in the steady state, we examine an $\mbox{M}/\mbox{H}_{2}/100+\mbox{M}$
queue whose service times follow a hyperexponential distribution with $\mu=1.0$
and $c_{s}^{2}=3.0$. There are two types of customers in the queue, and the
service times of either type are iid exponential random variables. The fraction
of the first type is $59.16\%$ and its mean service time is $0.1691$; the
fraction of the second type is $40.84\%$ and its mean service time is $2.203$.
We are interested in this queue because the exact distribution of the
steady-state number of customers can be computed by the matrix-analytic method
(see, e.g., \citet{LatoucheRamaswami99}). We may also approximate the
distribution of $ X(\infty) $ by
\begin{equation}
\mathbb{P}[X(\infty) = i] \approx \frac{1}{\sigma_{x}}
\phi\Big(\frac{i-n-q}{\sigma_{x}}\Big)
\quad\mbox{for }i=0,1,\ldots,
\label{eq:Gaussian-approx}
\end{equation}
where $ \phi $ is the standard Gaussian density function and $ q $ and $
\sigma_{x} $ are given by \eqref{eq:mean-queue} and \eqref{eq:variance-queue},
respectively. We compare these two distributions in Figure~\ref{fig:H2}.
Although the Gaussian approximation cannot capture the exact distribution for
$\gamma=1.0$, it becomes a good fit for $\gamma=5.0$.

Through the above numerical examples, we can tell that both the variability in
service times and the ratio of the mean patience time to the mean service time
have influence on the accuracy of approximation. We would thus introduce the
following quantity
\[
R_{\operatorname{svp}} = \frac{c_{s}}{\gamma\mu},
\]
or the ratio of the standard deviation of service times to the mean patience
time, as an index of accuracy when a queue is approximated by the diffusion
model. This quantity is called the \emph{service-variability-to-patience ratio
(SVPR)}. (Essentially, the SVPR is an index of how close the space-time scaled
service completion process is to a Brownian motion.) By extensive numerical
experiments, we observe that the diffusion approximations are generally accurate
when $ R_{\operatorname{svp}} < 0.5$.

\subsection{Staffing using the diffusion model}
\label{sec:staffing}

\begin{figure}[t]
\centering
\includegraphics[width=3.1in]{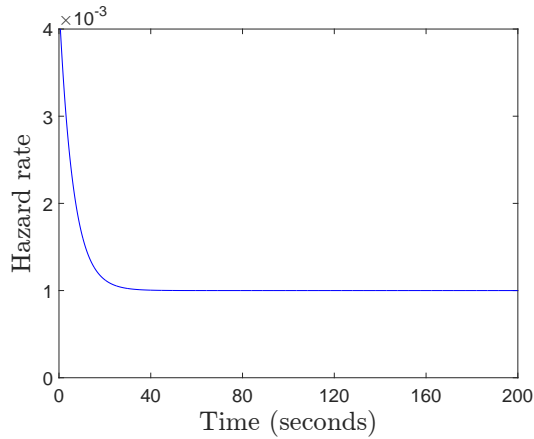}
\caption{The hazard rate of the hyperexponential distribution.}
\label{fig:H2-patience}
\end{figure}

In typical service-oriented call centers, the variability in customer service
times is not significant and most customers are relatively patient when they are
waiting for service. For example, by analyzing a set of operational data from an
Israeli call center, \citet{MandelbaumETAL01} reported that the customer service
times had moderate variability, ranging from $ c_{s}^{2} = 1.54$ to $ c_{s}^{2}
= 5.83 $ for different months, and that the mean patience time was about four
times longer than the mean service time. Both \emph{moderate service
variability} and \emph{long customer patience} suggest that the diffusion model
is appropriate for modeling call centers in the ED regime. Next, let us study an
application of the diffusion model to call center staffing.

Consider an $ \mbox{M}/\mbox{LN}/n+\mbox{H}_{2} $ queue, which has a Poisson
arrival process, a log-normal service time distribution, and a hyperexponential
patience time distribution. There are two types of customers with different
abandonment behaviors, and the patience times of either type are iid exponential
random variables. We assume that 98\% of customers have long patience times with
mean $ 1000 $ seconds and 2\% of customers have short patience times with mean $
6.0 $ seconds. We use such a distribution to represent a typical pattern of
abandonment in call centers: While most customers would wait patiently for their
service, a small fraction of customers would hang up within seconds if they
cannot be served immediately.  The hazard rate function of patience times is
plotted in Figure~\ref{fig:H2-patience}. We adjusted the parameters in order for
the hyperexponential distribution to imitate the patience time distribution in
the call center of a large U.S. bank, whose operational data were analyzed by
\citet{MandelbaumZeltyn13}. By checking Figure~2 in their paper, one can see
that our hazard rate is just a ``replica'' of the smoothed hazard rate of their
patience time data.

In this queue, the service rate is taken to be $ \mu = 1/230 $ per second, which
is obtained from the estimated mean service time in the U.S. call center; see
Figure~16 in \citet{MandelbaumZeltyn13}. Their paper, however, does not provide
information about the variance of service time. We would consider two scenarios
where the log-normal service time distribution has $ c_{s}^{2} = 3.0 $ and $
c_{s}^{2} = 5.0 $, respectively.

\begin{figure}[t]
\centering
\begin{subfigure}[b]{0.49\textwidth}
\includegraphics[width=3.1in]{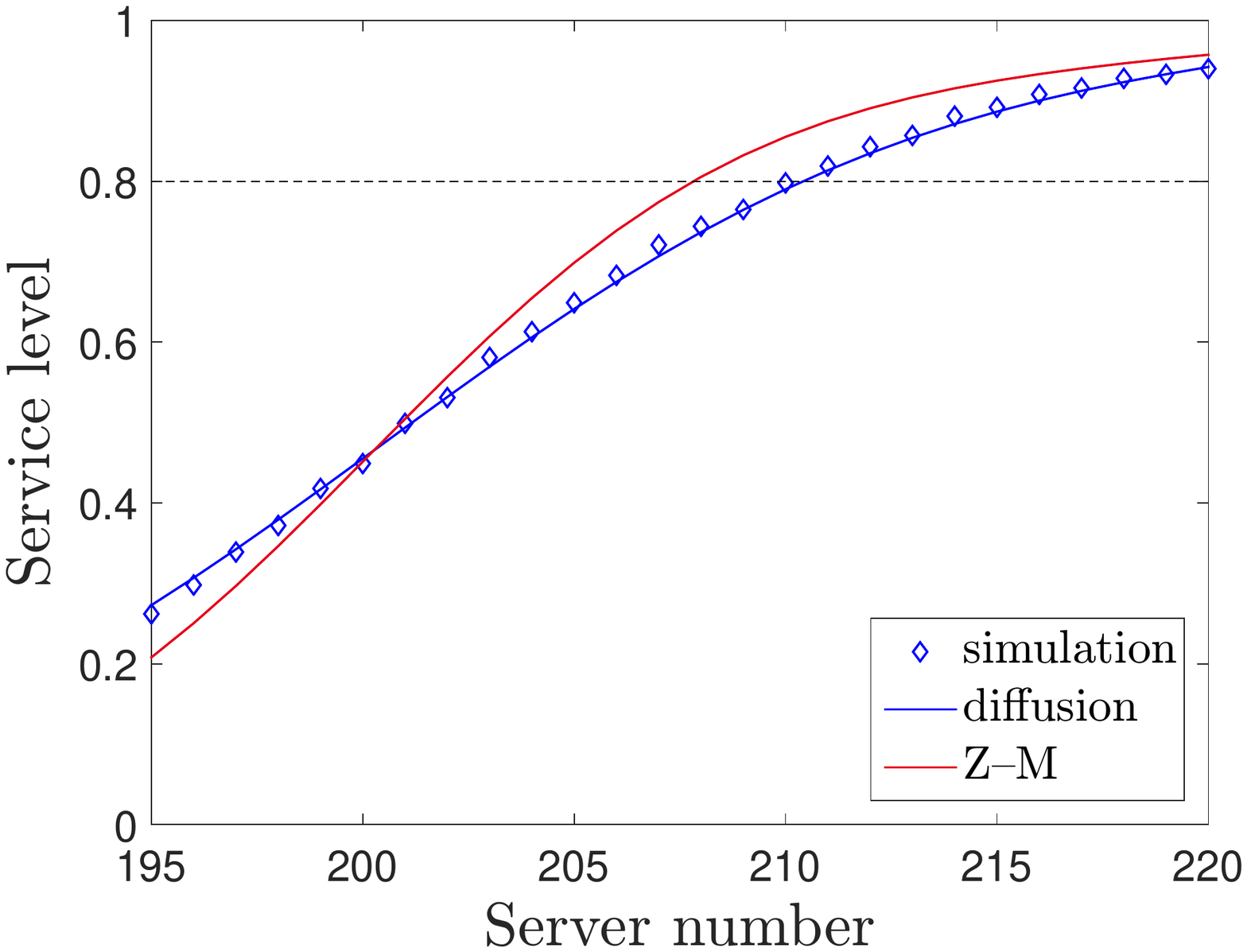}
\caption{$c_{s}^{2} = 3.0$}
\label{fig:service-level-cs3}
\end{subfigure}
\begin{subfigure}[b]{0.49\textwidth}
\includegraphics[width=3.1in]{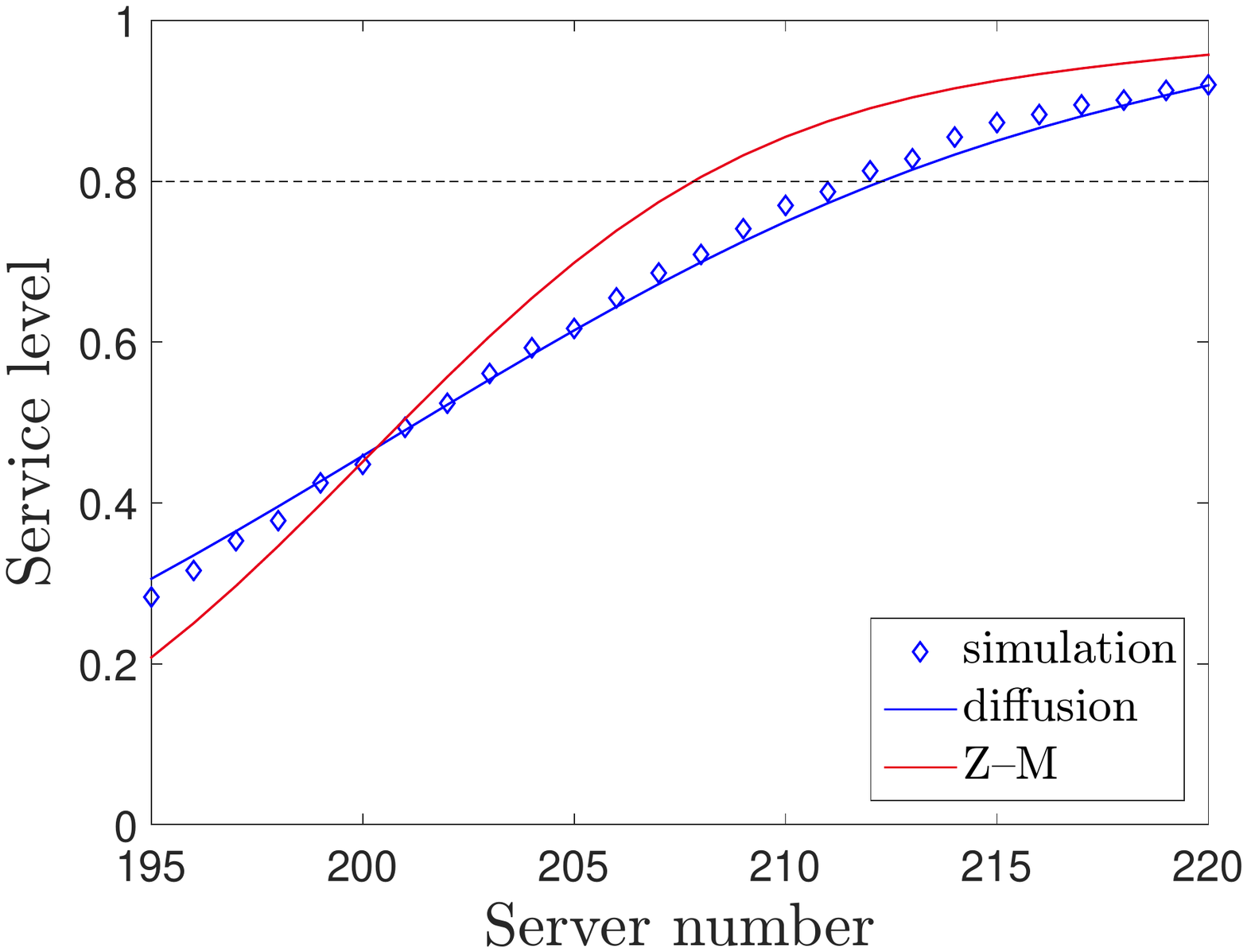}
\caption{$c_{s}^{2} = 5.0$}
\label{fig:service-level-cs5}
\end{subfigure}
\caption{The service level (the percentage of customers who receive service
within 120 seconds) in the $ \mbox{M}/\mbox{LN}/n+\mbox{H}_{2} $ queue
with $\lambda = 1.0 $, $ \mu = 1/230 $, and the hyperexponential
patience time distribution illustrated in Figure~\ref{fig:H2-patience};
simulation results are compared with the estimates by
\eqref{eq:percentage} and the estimates by
\citet{ZeltynMandelbaum05}.}
\label{fig:service-level}
\end{figure}

Let the customer arrival rate be fixed at $ \lambda = 1.0 $ per second. We would
like to determine the minimum number of servers that is required for \emph{at
least 80\% of customers to receive service within 120 seconds.} The given delay
in this example is around one half of the mean service time, which is a
reasonable requirement because customer waiting times should be comparable to
service times in order for the queue to be in the ED regime. For any fixed
number of servers, the service level within the given delay can be estimated by
\eqref{eq:percentage}.  We compare the service level estimates with simulation
results in Figure~\ref{fig:service-level}, and find good agreement in both
scenarios. To reach the service level objective, the estimates from the
diffusion model recommend $ 211 $ servers for $ c_{s}^{2} = 3.0$ and $ 213 $
servers for $ c_{s}^{2} = 5.0$, while the simulation results recommend $ 211 $
and $ 212 $ servers, respectively. It can be seen that the estimation error is
more apparent when $ c_{s}^{2} = 5.0 $, which is consistent with the fact that
this scenario has a larger SVPR.

An existing method is also considered for the purpose of comparison.
\citet{ZeltynMandelbaum05} modeled a call center as an $
\mbox{M}/\mbox{M}/n+\mbox{GI} $ queue and studied performance approximations for
this model. Their results are based on the analysis of a Markov chain studied by
\citet{BaccelliHebuterne81}, and some results are used for call center staffing
in \citet{MandelbaumZeltyn09}. The approximate distribution of the steady-state
virtual waiting time can be found in (6.12) in their paper. This approximation
turns out to be identical to our Gaussian approximation if we take $ c_{a}^{2} =
c_{s}^{2} = 1.0$ in \eqref{eq:W-tilde-tail}, so we may still use
\eqref{eq:percentage} to estimate the service level by their approximation. The
resulting service level estimates are marked with ``Z--M'' in
Figure~\ref{fig:service-level}.

\begin{figure}[t]
\centering
\begin{subfigure}[b]{0.49\textwidth}
\includegraphics[width=3.1in]{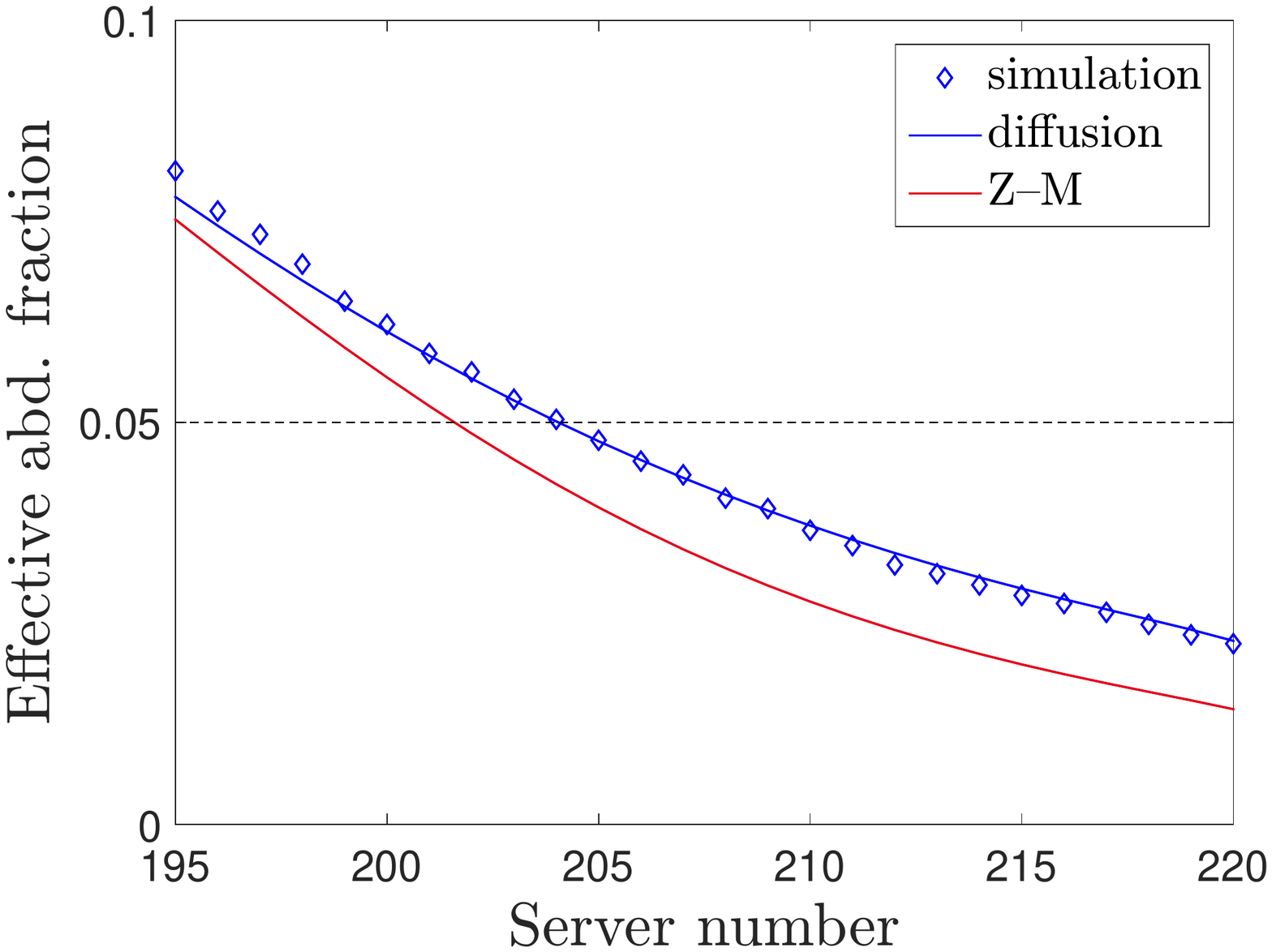}
\caption{$c_{s}^{2} = 3.0$}
\label{fig:abd-level-cs3}
\end{subfigure}
\begin{subfigure}[b]{0.49\textwidth}
\includegraphics[width=3.1in]{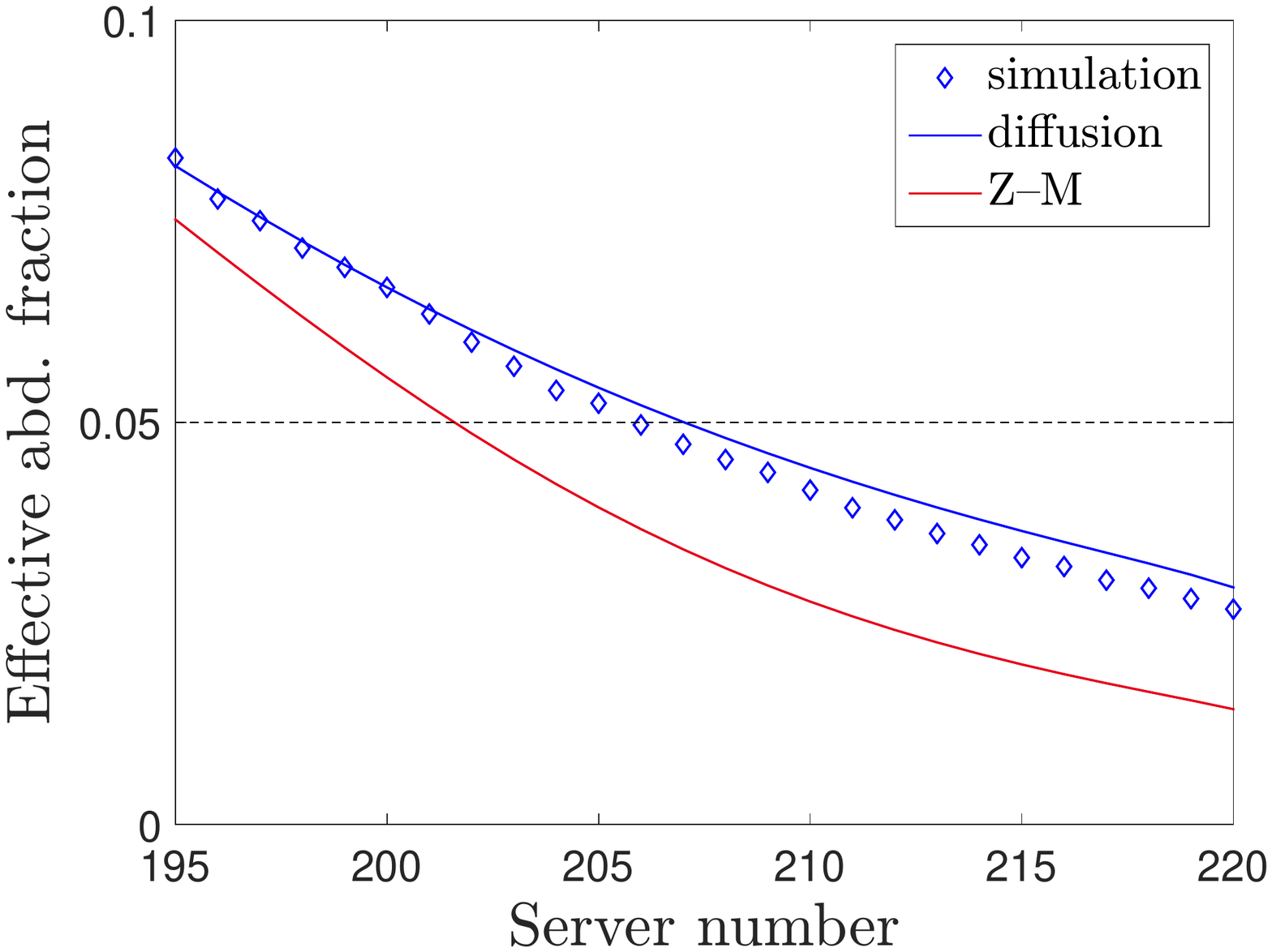}
\caption{$c_{s}^{2} = 5.0$}
\label{fig:abd-level-cs5}
\end{subfigure}
\caption{The effective abandonment fraction (the fraction of abandoning
	customers out of those whose waiting times exceed 60 seconds) in the $
	\mbox{M}/\mbox{LN}/n+\mbox{H}_{2} $ queue with $ \lambda = 1.0 $, $ \mu =
	1/230$, and the hyperexponential patience time distribution illustrated in
	Figure~\ref{fig:H2-patience}; simulation results are compared with the estimates by \eqref{eq:fraction-abandonment} and the estimates by
	\citet{ZeltynMandelbaum05}.}
\label{fig:abd-level}
\end{figure}

The approximate results by \citet{ZeltynMandelbaum05} are based on the
assumption that both interarrival times and service times are exponentially
distributed. Their estimates may not be accurate when these distributions are
non-exponential. We find a wide discrepancy in Figure~\ref{fig:service-level}
between the simulation results and their service level estimates. Because their
approximation assumes $ c_{s}^{2} = 1.0 $, the estimates suggest 208 servers in
both scenarios. If this recommendation is adopted, the actual service level will
be 74.4\% for $ c_{s}^{2} = 3.0 $ and 70.9\% for $ c_{s}^{2} = 5.0 $; in neither
case can the service level objective be fulfilled. When the variability in
arrival and service times is relatively large, one tends to underestimate the
staffing level using the $ \mbox{M}/\mbox{M}/n+\mbox{GI} $ model, in which case
the required service level can hardly be achieved.

The fraction of abandoning customers is also a common concern in call center
staffing. This time we would like to determine the minimum number of servers in
the $ \mbox{M}/\mbox{LN}/n+\mbox{H}_{2} $ queue, such that \emph{out of those
whose waiting times exceed 60 seconds, the fraction of abandoning customers is
less than 5\%.} Excluding customers abandoning the system within a short delay,
this effective abandonment fraction is used more widely as a measure of customer
satisfaction than the total percentage of abandoning customers.
Figure~\ref{fig:abd-level} compares the respective estimates by
\eqref{eq:fraction-abandonment} and by \citet{ZeltynMandelbaum05} with
simulation results. As in the previous example, the estimates by
\eqref{eq:fraction-abandonment} agree well with the simulation results, whereas
the results by \citet{ZeltynMandelbaum05} cannot capture the actual fractions.
To achieve the above target, the simulation results recommend 205 servers for $
c_{s}^{2} = 3.0 $ and 206 servers for $ c_{s}^{2} = 5.0 $, and the estimates by
\eqref{eq:fraction-abandonment} suggest 205 and 207 servers, respectively. In
contrast, the results by \citet{ZeltynMandelbaum05} yield 202 servers in both
scenarios. If one follows this recommendation, the effective abandonment
fraction will be 5.55\% for $ c_{s}^{2} = 3.0 $ and 6.15\% for $ c_{s}^{2} = 5.0
$, failing to reach the performance objective in both cases.

\section{Diffusion limit of virtual waiting time}
\label{sec:Proof-virtual}

This section is dedicated to the proof of
Theorem~\ref{theorem:many-server-virtual}. A sequence of perturbed systems is
introduced in Section~\ref{sec:perturbed}. In Section~\ref{sec:equivalence}, we
first show that the perturbed systems are asymptotically equivalent to the
original queues, then prove the diffusion limit for virtual waiting time
processes in the perturbed systems, and finally finish the proof of the theorem
by using the asymptotic equivalence. The proof procedure in
Section~\ref{sec:equivalence} partially follows that of Theorem~1 in
\citet{HuangETAL14}.

\subsection{Perturbed systems}
\label{sec:perturbed}

Consider the virtual waiting time process in the $n$th queue. Let $w_{n,k}$ be
the \emph{offered waiting time} of the $k$th customer arriving after time zero,
which is the amount of time the $k$th customer would wait until getting
into service, provided that his patience is infinite. If we use $\zeta_{n,k}$ to
denote the patience time of the $k$th customer, the number of customers who
arrived during $(0,t]$ but will eventually abandon the system can be counted by
\[
L_{n}(t)=\sum_{k=1}^{E_{n}(t)}1_{\{\zeta_{n,k}\leq w_{n,k}\}}.
\]
Suppose that all customers arriving after time $t$ are rejected immediately.
Then, the virtual waiting time at $t$ turns out to be the amount of time from
$t$ until an idle server appears, i.e.,
\begin{equation}
W_{n}(t)=\inf\{u\geq0:X_{n}(0)+E_{n}(t)-L_{n}(t)-D_{n}(t+u)<n\},
\label{eq:Wn}
\end{equation}
where $D_{n}(t)$ is the number of service completions during $(0,t]$. Let
$a_{n,k}$ be the arrival time of the $k$th customer. Because no two customers
arrive at the same time, it follows from Lemma~3.2 in \citet{DaiHe10} that
\[
w_{n,k}=W_{n}(a_{n,k}-).
\]

All servers are almost always busy in the ED regime. For $j=1,\ldots,n$, because
$\{\xi_{j,k}:k\in\mathbb{N}\}$ is the sequence of service times to be finished
by the $j$th server, the service completion process from this server is
identical to the renewal process $N_{j}$ until it begins to idle. Therefore, the
departure process $D_{n}$ is identical to the superposition of
$N_{1},\ldots,N_{n}$ until the first idle server appears. Let
\[
\tau_{n}=\inf\{t\geq0:W_{n}(t)=0\}
\]
be the time that the first idle server appears. Then, $ \tau_{n} > 0 $ because
all servers are busy at time zero. The departure process satisfies
\[
D_{n}(t)=B_{n}(t)\quad\mbox{for }0\leq t\leq\tau_{n},
\]
where $B_{n}(t)$ is given by \eqref{eq:B}. As the superposition of $n$ iid
stationary renewal processes, $B_{n}$ is more analytically tractable than
$D_{n}$. The equivalence between these two processes up to time $\tau_{n}$
allows us to explore a perturbed system that has simplified dynamics. This
perturbed system is asymptotically equivalent to the original queue as $n$ goes
large.

Because $W_{n}(t)>0$ for $0\leq t<\tau_{n}$, it follows from \eqref{eq:Wn} that with probability one,
\[
X_{n}(0)+E_{n}(t)-L_{n}(t)-D_{n}(t+W_{n}(t))=n-1\quad\text{for }0\leq
t\leq\tau_{n}.
\]
Because $D_{n}(t+u)=B_{n}(t+u)$ for $0\leq t\leq\tau_{n}$ and $0\leq u\leq
W_{n}(t)$, we further have
\[
X_{n}(0)+E_{n}(t)-L_{n}(t)-B_{n}(t+W_{n}(t))=n-1\quad\text{for }0\leq
t\leq\tau_{n}.
\]
Let us introduce a new process $V_{n}$ by using a slightly modified dynamical
equation
\begin{equation}
X_{n}(0)+E_{n}(t)-R_{n}(t)-B_{n}(t+V_{n}(t))=n-1\quad\text{for }t\geq0,
\label{eq:perturbed}%
\end{equation}
where
\[
R_{n}(t)=\sum_{k=1}^{E_{n}(t)}1_{\{\zeta_{n,k}\leq V_{n}(a_{n,k}-)\}}.
\]
By a standard sample path argument, one can show that \eqref{eq:perturbed} has a unique solution $V_{n}$. Clearly,
\begin{equation}
W_{n}(t)=V_{n}(t)\quad\text{for }0\leq t\leq\tau_{n}
\label{eq:VW}
\end{equation}
on each sample path, and $\tau_{n}$ can thus be defined alternatively by%
\begin{equation}
\tau_{n}=\inf\{t\geq0:V_{n}(t)=0\}. \label{eq:tau-n}%
\end{equation}
Taking $t=0$ in \eqref{eq:perturbed}, we obtain $B_{n}(V_{n}(0))=X_{n}
(0)-(n-1)$, which allows us to write \eqref{eq:perturbed} into
\begin{equation}
B_{n}(t+V_{n}(t))=B_{n}(V_{n}(0))+E_{n}(t)-R_{n}(t).
\label{eq:BV}
\end{equation}
We refer to $V_{n}$ as the virtual waiting time process in the $ n $th
\emph{perturbed system}. The dynamical equation \eqref{eq:BV} is identical to
equation (9) in \citet{HuangETAL14}, while the latter is derived for their
original system.

The perturbed system can be envisioned as a queue where no server is allowed to
idle. If a server finds no waiting customers upon a service completion, she
begins to serve a customer who has not arrived yet. In the perturbed system, all
servers are always busy and the departure process from each server is a
stationary renewal process.

\subsection{Asymptotic equivalence}
\label{sec:equivalence}

We first prove a fluid limit for the virtual waiting time processes in the
perturbed systems. This limit allows us to establish the asymptotic equivalence
between the original queues and the perturbed systems, which implies that these
two sequences of systems have an identical diffusion limit.

Lemma~\ref{lemma:map} is a modified version of Theorem~4.1 in
\citet{PangETAL07}, stating the continuity of a map defined by an
integral equation. This map will be used extensively in the subsequent proofs.
\begin{lemma}
\label{lemma:map}
For any $f\in\mathbb{D}$, let $x$ be a function in $\mathbb{D}$ such that
\begin{equation}
x(t)= f(t)- \int_{0}^{t} g(x(u))\,\mathrm{d}u,
\label{eq:map}
\end{equation}
where $ g:\mathbb{R}\rightarrow\mathbb{R} $ is Lipschitz continuous with $
g(0)=0 $. Then for each $ f\in\mathbb{D} $, there is a unique $x\in\mathbb{D}$
such that \eqref{eq:map} holds. Let $\psi:\mathbb{D}\rightarrow\mathbb{D}$ be
the function that maps $f$ to $x$. Then, $\psi$ is a continuous map when
$\mathbb{D}$ (as both the domain and the range) is endowed with the $J_{1}$
topology.
\end{lemma}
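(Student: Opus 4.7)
\medskip

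\noindent\textbf{Proof proposal.} My plan is to handle existence/uniqueness by a Picard--Banach argument and then obtain $J_1$-continuity by reducing it, through a suitable time change, to a uniform-convergence Gronwall estimate.

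First I would establish existence and uniqueness on any finite interval $[0,T]$. Fix $f\in\mathbb{D}$ and define the operator $(\Gamma x)(t) = f(t) - \int_{0}^{t} g(x(u))\,\mathrm{d}u$ on $\mathbb{D}([0,T_{0}])$ equipped with the uniform norm. Since $g$ is Lipschitz with constant $L$ and $g(0)=0$, we have $\|\Gamma x_{1} - \Gamma x_{2}\|_{\infty} \leq L T_{0} \|x_{1}-x_{2}\|_{\infty}$, so $\Gamma$ is a contraction for $T_{0} < 1/L$ and admits a unique fixed point. Moreover, $\Gamma$ maps $\mathbb{D}([0,T_{0}])$ into itself (the integral term is continuous in $t$, so $\Gamma x$ inherits the càdlàg property of $f$). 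Iterating on successive intervals of length $T_{0}$ produces a unique solution $x=\psi(f)\in\mathbb{D}$ on $[0,\infty)$. A standard Gronwall step also shows $\|x\|_{\infty,[0,T]} \leq \|f\|_{\infty,[0,T]} e^{LT}$ for every $T$.

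For continuity, suppose $f_{n}\rightarrow f$ in the $J_{1}$ topology on $\mathbb{D}([0,T])$. Using the equivalent metric $d_{J_{1}}^{0}$, there exist strictly increasing continuous time changes $\lambda_{n}:[0,T]\to[0,T]$ with $\lambda_{n}(0)=0$, $\lambda_{n}(T)=T$, $\|\lambda_{n}-e\|_{\infty}\rightarrow 0$, and $\|\log \lambda_{n}'\|_{\infty}\rightarrow 0$, such that $\tilde{f}_{n}:=f_{n}\circ \lambda_{n}\to f$ uniformly on $[0,T]$. Writing $x_{n}=\psi(f_{n})$, $x=\psi(f)$, and $\tilde{x}_{n}=x_{n}\circ\lambda_{n}$, the integral equation for $x_{n}$ evaluated at $\lambda_{n}(t)$ gives, after the change of variable $u=\lambda_{n}(s)$,
\[
\tilde{x}_{n}(t) = \tilde{f}_{n}(t) - \int_{0}^{t} g(\tilde{x}_{n}(s))\lambda_{n}'(s)\,\mathrm{d}s.
\]
Subtracting the equation for $x$, the Lipschitz property of $g$ yields
\[
|\tilde{x}_{n}(t)-x(t)| \leq \|\tilde{f}_{n}-f\|_{\infty} + L\int_{0}^{t}|\tilde{x}_{n}(s)-x(s)|\,\mathrm{d}s + L\,\|\lambda_{n}'-1\|_{\infty}\int_{0}^{T}|\tilde{x}_{n}(s)|\,\mathrm{d}s,
\]
and a preliminary Gronwall bound (analogous to the one used for existence) shows $\sup_{n}\|\tilde{x}_{n}\|_{\infty}<\infty$. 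Hence the last two terms are dominated by $\varepsilon_{n}\to 0$, and Gronwall's inequality gives $\|\tilde{x}_{n}-x\|_{\infty}\leq(\|\tilde{f}_{n}-f\|_{\infty}+\varepsilon_{n})e^{LT}\rightarrow 0$. Since the same $\lambda_{n}$ witnesses $J_{1}$-convergence of both $f_{n}$ and $x_{n}$, we conclude that $x_{n}\to x$ in $J_{1}$.

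The main obstacle, and the reason the space-time scaling of later proofs is delicate, is the handling of the time change within the integral: we need the stronger $d_{J_{1}}^{0}$ metric so that $\lambda_{n}'\to 1$ uniformly, which allows the change-of-variables error term to be absorbed. A minor technicality is extending the result from compact time intervals to $[0,\infty)$; this follows from the fact that $J_{1}$-convergence on $\mathbb{D}$ is characterized by $J_{1}$-convergence on each $[0,T]$ at which the limit $f$ is continuous, together with applying the argument above on each such interval.
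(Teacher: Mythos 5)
Your proposal is correct. Note first how it relates to the paper: the paper does not prove this lemma at all; it imports it as a modified version of Theorem~4.1 in \citet{PangETAL07}. Your argument is essentially a self-contained reconstruction of the proof in that reference: existence and uniqueness by a Banach fixed-point iteration on subintervals of length $T_{0}<1/L$ (with Gronwall giving the a priori bound $\|x\|_{\infty,[0,T]}\leq\|f\|_{\infty,[0,T]}e^{LT}$), and $J_{1}$-continuity by composing with the time changes that witness $J_{1}$-convergence, changing variables inside the integral, and absorbing the resulting error via Gronwall's inequality. So you have supplied a complete proof where the paper supplies only a citation, and the proof you give is the standard one.

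Two technical points deserve a word, though neither is a genuine gap. First, the time changes coming from the $d_{J_{1}}^{0}$ metric need not be differentiable; the metric only controls $\sup_{s\neq t}\vert\log[(\lambda_{n}(t)-\lambda_{n}(s))/(t-s)]\vert$. This makes $\lambda_{n}$ and $\lambda_{n}^{-1}$ bi-Lipschitz, hence $\lambda_{n}$ is absolutely continuous with a.e.\ derivative in $[e^{-\epsilon_{n}},e^{\epsilon_{n}}]$, so the change-of-variables identity and your bound $\|\lambda_{n}^{\prime}-1\|_{\infty}\rightarrow 0$ (read as an essential supremum) remain valid; alternatively, one may take $\lambda_{n}$ piecewise linear. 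Second, in the extension from compact intervals to $[0,\infty)$, convergence $x_{n}\rightarrow x$ in $\mathbb{D}$ requires convergence of the restrictions to $[0,T]$ at continuity points of the limit $x$, not of $f$; this is harmless precisely because $x-f$ is continuous (the integral term is Lipschitz in $t$), so $x$ and $f$ have exactly the same discontinuity points. Making that one-line observation explicit would close the argument completely.
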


Write the dynamical equation \eqref{eq:BV} into the fluid-scaled form
\begin{equation}
\bar{B}_{n}(t+\bar{V}_{n}(t))=\bar{B}_{n}(\bar{V}_{n}(0))+\bar{E}_{n}%
(t)-\bar{R}_{n}(t),
\label{eq:fluid-BV}
\end{equation}
where
\begin{equation}
\bar{E}_{n}(t)=\frac{1}{n\gamma_{n}}E_{n}(\gamma_{n}t),
\quad \bar{B}_{n}(t)=\frac{1}{n\gamma_{n}}B_{n}(\gamma_{n}t), \quad\bar{R}_{n}(t)=\frac{1}{n\gamma_{n}
}R_{n}(\gamma_{n}t),
\quad \bar{V}_{n}(t)=\frac{1}{\gamma_{n}}V_{n}(\gamma_{n}t).
\label{eq:bar-VR}
\end{equation}
By \eqref{eq:E}, the fluid-scaled arrival process satisfies
\begin{equation}
\bar{E}_{n}\Rightarrow\rho\mu e\quad\mbox{as }n\rightarrow\infty.
\label{eq:fluid-E}
\end{equation}
As a scaled version of the superposition of renewal processes, $ \bar{B}_{n}
$ satisfies a functional strong law of large numbers (FSLLN), as is stated
in the proposition below. We leave the proof to the appendix.

\begin{proposition}
\label{prop:FSLLN}
Under the conditions of Theorem~\ref{theorem:FCLT},
\[
\bar{B}_{n}\rightarrow\mu e\quad\mbox{almost surely as }n\rightarrow\infty\mbox{.}
\]
\end{proposition}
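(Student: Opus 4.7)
The plan is to combine a moment estimate at fixed times with the monotonicity of $\bar{B}_n$ to establish uniform-on-compacts almost-sure convergence to the continuous limit $\mu e$, which is equivalent to convergence in the Skorokhod $J_1$ topology.

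First, because each $N_j$ is a \emph{stationary} renewal process, its counting process has stationary increments, so $\mathbb{E}[N_j(s)] = \mu s$ for every $s\geq 0$; consequently $\mathbb{E}[\bar{B}_n(t)] = \mu t$ for all $n$ and $t$, and the task reduces to showing that the centered quantity $\bar{B}_n(t) - \mu t$ vanishes almost surely uniformly on compact intervals. Setting $Z_{j,n}(t) = N_j(\gamma_n t) - \mu\gamma_n t$, the random variables $Z_{1,n}(t),\dots,Z_{n,n}(t)$ are i.i.d.\ across $j$ and centered. Classical renewal-theoretic bounds under finite second moment of $F$ give $\mathrm{Var}(N_1(s)) = O(s)$ as $s\to\infty$, and therefore
\begin{equation*}
\mathrm{Var}\bigl(\bar{B}_n(t) - \mu t\bigr) = \frac{1}{n\gamma_n^{2}}\,\mathrm{Var}\bigl(N_1(\gamma_n t)\bigr) = O\!\left(\frac{1}{n\gamma_n}\right).
\end{equation*}

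Second, to upgrade this to an a.s.\ statement at each fixed $t$, I would invoke a fourth-moment estimate. Using the third-moment condition \eqref{eq:m3} on $F$ together with the regularity assumption \eqref{eq:conditionF}, standard renewal arguments (truncation at level $s$ plus a Marcinkiewicz--Zygmund-type bound on the partial sums of interrenewal times) yield $\mathbb{E}\bigl[(N_1(s) - \mu s)^{4}\bigr] = O(s^{2})$ as $s\to\infty$. Independence across $j$ then collapses the expansion of $\mathbb{E}\bigl[(\sum_{j=1}^n Z_{j,n}(t))^{4}\bigr]$ to its diagonal and two-pair contributions, both of order $n^{2}\gamma_n^{2}$, so
\begin{equation*}
\mathbb{E}\!\left[\bigl(\bar{B}_n(t) - \mu t\bigr)^{4}\right] = O\!\left(\frac{1}{n^{2}\gamma_n^{2}}\right),
\end{equation*}
which is summable in $n$ because $\gamma_n\geq 1$ eventually. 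Borel--Cantelli then gives $\bar{B}_n(t)\to\mu t$ almost surely for every fixed $t\geq 0$.

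Third, to promote pointwise a.s.\ convergence to $J_1$-convergence in $\mathbb{D}$, fix a countable dense set $D\subset[0,\infty)$. On a probability-one event, the pointwise limit holds simultaneously for every $t\in D$. Because each $\bar{B}_n$ is nondecreasing in $t$ and the limit $\mu e$ is continuous and nondecreasing, the standard sandwich argument---bracketing $\bar{B}_n(t)$ between $\bar{B}_n(r)$ and $\bar{B}_n(s)$ for rationals $r<t<s$, then letting $r\uparrow t$ and $s\downarrow t$---delivers $\sup_{t\in[0,T]}|\bar{B}_n(t)-\mu t|\to 0$ almost surely for every $T>0$. Uniform convergence on compacts to a continuous limit is equivalent to convergence in the $J_1$ topology, completing the proof.

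The main obstacle is justifying the fourth-moment estimate $\mathbb{E}[(N_1(s)-\mu s)^{4}] = O(s^{2})$ while using only the third-moment assumption \eqref{eq:m3}. If that estimate is delicate, a robust fallback is to select a subsequence $n_k$ along which $n_k\gamma_{n_k}$ grows at least like $k^{2}$ (possible because $\gamma_n\to\infty$), apply Chebyshev and Borel--Cantelli along $\{n_k\}$ using only the variance bound, and then control intermediate indices via the tightness furnished by Theorem~\ref{theorem:FCLT}, which already implies $\sup_{t\leq T}|\bar{B}_n(t)-\mu t|\to 0$ in probability and thus tames the fluctuations between consecutive subsequence points, again combined with monotonicity to preserve uniformity on compacts.
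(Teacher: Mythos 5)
Your overall architecture is sound and genuinely different from the paper's: the paper never touches moments of the counting processes, but instead sandwiches $n\gamma_n t$ between $\sum_{j}S_{j,N_j(\gamma_n t)}$ and $\sum_j S_{j,N_j(\gamma_n t)+1}$ and applies the ordinary strong law of large numbers to the interrenewal times themselves, then finishes exactly as you do (monotonicity plus continuity of the limit). The genuine gap in your version is the central estimate $\mathbb{E}[(N_1(s)-\mu s)^4]=O(s^2)$: the tools you cite do not deliver it under the finite \emph{third} moment assumption \eqref{eq:m3}. The dangerous deviations are the negative ones, $\mathbb{P}[N_1(s)<\mu s-y]=\mathbb{P}[S_{\lceil \mu s-y\rceil}>s]$, i.e.\ upper deviations of the partial sums; Rosenthal/Marcinkiewicz--Zygmund at exponent $3$ gives only $\mathbb{E}|S_k-k/\mu|^3=O(k^{3/2})$, hence a tail bound $O(s^{3/2}/y^3)$, and integrating $4y^3$ against this over $[\sqrt{s},\mu s]$ yields $O(s^{5/2})$, not $O(s^2)$. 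Reaching $O(s^2)$ truly requires a Fuk--Nagaev-type inequality with truncation at the deviation level $y$ (not at level $s$), plus Chernoff bounds for the positive deviations and a separate treatment of the delay term $\xi_{j,1}\sim F_e$ (whose variance is finite precisely because of \eqref{eq:m3}). The good news is that the gap is repairable without the sharp bound: the weaker estimate $\mathbb{E}[(N_1(s)-\mu s)^4]=O(s^{5/2})$, which Rosenthal at $p=3$ \emph{does} give for the negative part, already implies
\[
\mathbb{E}\bigl[(\bar{B}_n(t)-\mu t)^4\bigr]
=\frac{1}{n^4\gamma_n^4}\Bigl(n\,\mathbb{E}[Z_{1,n}(t)^4]+3n(n-1)\bigl(\mathbb{E}[Z_{1,n}(t)^2]\bigr)^2\Bigr)
=O\Bigl(\frac{1}{n^3\gamma_n^{3/2}}\Bigr)+O\Bigl(\frac{1}{n^2\gamma_n^{2}}\Bigr),
\]
which is still summable, so Borel--Cantelli and the rest of your argument go through once you restate the lemma with the weaker exponent and prove it honestly.

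Your fallback, however, must be discarded for two independent reasons. First, it is circular in this paper: Theorem~\ref{theorem:FCLT} is proved \emph{using} Proposition~\ref{prop:FSLLN} (the proposition enters the proof of Lemma~\ref{lemma:L-tilde} through the random time-change step), so the FCLT cannot be invoked to establish the proposition. Second, even if the FCLT were available from an independent source, almost sure convergence along a subsequence combined with convergence \emph{in probability} at intermediate indices does not yield almost sure convergence of the full sequence; the classical subsequence trick requires pathwise control of the intermediate indices, which here would need monotonicity of $n\mapsto \bar{B}_n(t)$, and that is unavailable because $\gamma_n$ is only assumed to tend to infinity, not to be monotone. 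So the fallback proves at best convergence in probability, which is weaker than the stated claim.
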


The following result states that in these perturbed systems, both the
fluid-scaled virtual waiting time processes and their total variations are
stochastically bounded.

\begin{lemma}
\label{lemma:TV}
Under the conditions of Theorem~\ref{theorem:many-server-virtual},
\begin{equation}
\lim_{a\rightarrow\infty}\limsup_{n\rightarrow\infty}\mathbb{P}\Big[\sup
_{0\leq t\leq T}(t+\bar{V}_{n}(t))>a\Big]=0
\label{eq:V-bar-bounded}
\end{equation}
and
\begin{equation}
\lim_{a\rightarrow\infty}\limsup_{n\rightarrow\infty}\mathbb{P}\Big[\int%
_{0}^{T}|\mathrm{d}\bar{V}_{n}(t)|>a\Big]=0 \quad\mbox{for }T>0.
\label{eq:V-total}
\end{equation}
\end{lemma}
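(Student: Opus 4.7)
The plan is to combine the dynamical equation \eqref{eq:fluid-BV} with the FSLLN of Proposition~\ref{prop:FSLLN} and the fluid limit $\bar{E}_n(t) \to \rho\mu t$ implied by \eqref{eq:fluid-E}. Since $\bar{R}_n$ is non-negative, \eqref{eq:fluid-BV} immediately gives
\[
\bar{B}_n(t + \bar{V}_n(t)) \leq \bar{B}_n(\bar{V}_n(0)) + \bar{E}_n(t),
\]
and this one-sided bound will drive the proof of \eqref{eq:V-bar-bounded}. First, $\bar{V}_n(0) = W_n(0)/\gamma_n = \bar{w} + (n\gamma_n)^{-1/2}\tilde{W}_n(0) \to \bar{w}$ in probability, using \eqref{eq:initial} together with $n\gamma_n \to \infty$. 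Given $T > 0$ and $\delta > 0$, I will fix $C$ so that $\mathbb{P}[\bar{V}_n(0) > C] < \delta$ for all large $n$, a small $\epsilon > 0$, and a deterministic threshold $M > C + \rho T + 3\epsilon/\mu$. Defining the stopping time $\sigma_n = \inf\{t \geq 0 : t + \bar{V}_n(t) > M\}$, I will restrict attention to the event on which $\bar{V}_n(0) \leq C$, $\sup_{s \in [0, M]}|\bar{B}_n(s) - \mu s| \leq \epsilon$, and $\sup_{t \in [0, T]}|\bar{E}_n(t) - \rho\mu t| \leq \epsilon$ --- an event of probability at least $1 - 3\delta$ for all large $n$. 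On this event, the displayed inequality on $[0, \sigma_n \wedge T)$ yields $t + \bar{V}_n(t) \leq C + \rho T + 3\epsilon/\mu$; since the jumps of $t + \bar{V}_n(t)$ vanish as $n \to \infty$, this forces $\sigma_n > T$ for all large $n$, establishing \eqref{eq:V-bar-bounded}.

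For \eqref{eq:V-total}, I will exploit the sample-path structure of $V_n$ in the perturbed system. Between consecutive arrivals both $E_n$ and $R_n$ are constant, so the right-hand side of \eqref{eq:BV} is constant; therefore $B_n(t + V_n(t))$ is constant and $V_n$ decreases at rate exactly $1$ on each inter-arrival interval. At an abandoning arrival, the unit jumps of $E_n$ and $R_n$ cancel and $V_n$ is unaffected; at a non-abandoning arrival, the right-hand side jumps up by one and $V_n$ makes an upward jump equal to the gap to the next renewal time of $B_n$. In particular, $V_n$ has no downward jumps, so its downward variation on $[0, \gamma_n T]$ is at most $\gamma_n T$, and the downward variation of $\bar{V}_n$ on $[0, T]$ is at most $T$. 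The Jordan decomposition identity $\text{total variation} = 2 \cdot (\text{downward variation}) + (\text{net change})$ then yields
\[
\int_0^T |\mathrm{d}\bar{V}_n(t)| \leq 2T + (\bar{V}_n(T) - \bar{V}_n(0))^+,
\]
and the right-hand side is stochastically bounded by the first part of the lemma (applied at $t = T$) and the convergence $\bar{V}_n(0) \to \bar{w}$.

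The main obstacle is the bootstrap embedded in the first part: the a priori estimate for $\bar{V}_n(t)$ requires controlling $\bar{B}_n$ on the random interval $[0, t + \bar{V}_n(t)]$, whose length depends on $\bar{V}_n$ itself. Introducing $\sigma_n$ and committing to a deterministic threshold $M$ before invoking the FSLLN is the standard device for breaking this circularity. A smaller subtlety surfaces in the second part, where $V_n$ may become negative in the perturbed system (since servers are never allowed to idle); one must verify the ``decrease at rate one between arrivals, no downward jumps'' picture directly from \eqref{eq:BV} in the negative regime rather than by appealing to a reflected-workload analogy.
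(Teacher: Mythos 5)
Your proposal is correct in substance, and its second half is essentially the paper's argument, but your proof of \eqref{eq:V-bar-bounded} takes a genuinely different and more laborious route. The paper starts from the same one-sided bound $\bar{B}_{n}(t+\bar{V}_{n}(t))\leq\bar{B}_{n}(\bar{V}_{n}(0))+\bar{E}_{n}(T)$, but then exploits the monotonicity of $\bar{B}_{n}$ in a way that dissolves your ``bootstrap'' obstacle in one line: if $\sup_{0\leq t\leq T}(t+\bar{V}_{n}(t))>a$, then $\bar{B}_{n}(a)\leq\bar{B}_{n}(\bar{V}_{n}(0))+\bar{E}_{n}(T)$, so the supremum over the random interval is controlled by evaluating $\bar{B}_{n}$ at the single deterministic point $a$; Proposition~\ref{prop:FSLLN} (giving $\bar{B}_{n}(a)\Rightarrow\mu a$ and $\bar{B}_{n}(\bar{V}_{n}(0))\Rightarrow\mu\bar{w}$ via \eqref{eq:initial} and \eqref{eq:VW}) and \eqref{eq:fluid-E} then finish the proof with no stopping time, no localization, and no jump analysis. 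Your stopping-time argument buys nothing extra here and costs you an additional step that you assert rather than prove: that the jumps of $t+\bar{V}_{n}(t)$ vanish. This claim is true but does require an argument, since those jumps are inter-renewal gaps of $B_{n}$ scaled by $1/\gamma_{n}$; it can be closed within your own framework by enlarging the good event to $\sup_{s\in[0,M+1]}|\bar{B}_{n}(s)-\mu s|\leq\epsilon$ with $\epsilon<\mu/2$ and observing that any flat piece of $\bar{B}_{n}$ meeting $[0,M]$ then has length at most $2\epsilon/\mu$, so the process cannot overshoot level $M$ from below $C+\rho T+3\epsilon/\mu$. As stated, though, this is a gap in your write-up.

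For \eqref{eq:V-total}, your argument and the paper's coincide: the key fact is that $e+\bar{V}_{n}$ has nondecreasing sample paths, which bounds the downward variation by $T$ and yields, via the Jordan decomposition, $\int_{0}^{T}|\mathrm{d}\bar{V}_{n}(t)|\leq 2T+\bar{V}_{n}(T)-\bar{V}_{n}(0)$, after which \eqref{eq:V-bar-bounded} gives stochastic boundedness. The only difference is provenance: the paper cites Lemma~3.3 of Dai and He (2010) for the monotonicity of $e+\bar{V}_{n}$, whereas you re-derive it from the sample-path structure of \eqref{eq:BV} (decrease at rate one between arrivals, upward jumps only at non-abandoning arrivals, cancellation at abandoning arrivals). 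Your derivation is correct under the natural convention that makes the solution of \eqref{eq:BV} unique, and your remark that one must check this picture in the regime $V_{n}<0$ is a legitimate point that the citation handles implicitly.
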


The next lemma
establishes an asymptotic relationship in the fluid scaling, allowing us to
approximate the abandonment process by an integral of the virtual waiting time
process.
\begin{lemma}
\label{lemma:fluid-R}
Under the conditions of Theorem~\ref{theorem:many-server-virtual},%
\[
\sup_{0\leq t\leq T}\Big\vert\bar{R}_{n}(t)-\rho\mu\int_{0}^{t}H(\bar{V}%
_{n}(u))\,\mathrm{d}u\Big\vert\Rightarrow0\quad\text{for }T>0.
\]
\end{lemma}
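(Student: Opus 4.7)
The plan is to first use a martingale argument to replace the abandonment indicators by their conditional probabilities, and then to use the fluid limit of the arrival process together with the total-variation bound from Lemma~\ref{lemma:TV} to match the resulting predictable compensator with the desired integral.

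I would begin by equipping the probability space with the filtration $\mathcal{F}_k^n=\sigma(X_n(0),E_n,\{N_j\}_{j\le n},\zeta_{n,1},\ldots,\zeta_{n,k})$, in which each patience time is revealed at its own step. Because $V_n(a_{n,k}-)$ is determined by the arrival process, the renewal processes $N_1,\ldots,N_n$, and $\zeta_{n,1},\ldots,\zeta_{n,k-1}$, it is $\mathcal{F}_{k-1}^n$-measurable; while $\zeta_{n,k}$ is independent of $\mathcal{F}_{k-1}^n$ with distribution $\Theta_n$. Hence $\mathbb{E}[\mathbf{1}_{\{\zeta_{n,k}\le V_n(a_{n,k}-)\}}\mid\mathcal{F}_{k-1}^n]=\Theta_n(V_n(a_{n,k}-))=H(\bar V_n(a_{n,k}/\gamma_n-))$, and one obtains the Doob decomposition $R_n(\gamma_n t)=M_n(\gamma_n t)+S_n(\gamma_n t)$, where $S_n(\gamma_n t)=\sum_{k=1}^{E_n(\gamma_n t)}H(\bar V_n(a_{n,k}/\gamma_n-))$ and $M_n$ is a square-integrable martingale whose conditional-variance increments are bounded by $1/4$. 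Doob's $L^2$-inequality then gives $\mathbb{E}[\sup_{t\le T}M_n(\gamma_n t)^2]\le \mathbb{E}[E_n(\gamma_n T)]=O(n\gamma_n)$, so $\sup_{t\le T}|M_n(\gamma_n t)|/(n\gamma_n)\to 0$ in probability.

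Next, I would rewrite the compensator as a Stieltjes integral, $S_n(\gamma_n t)/(n\gamma_n)=\int_0^t H(\bar V_n(s-))\,d\bar E_n(s)$, and note that replacing $\bar V_n(s-)$ by $\bar V_n(s)$ in the integral against $\rho\mu\,ds$ is harmless because the jump set of $\bar V_n$ is countable. The residual to control is therefore $\int_0^t H(\bar V_n(s-))\,d(\bar E_n-\rho\mu e)(s)$. Integration by parts for càdlàg functions, combined with the Lipschitz estimate $|d H(\bar V_n)|\le\kappa\,|d\bar V_n|$ obtained from \eqref{eq:bounded-density}, yields a bound of the form $\sup_{0\le s\le T}|\bar E_n(s)-\rho\mu s|\cdot\bigl(2+\kappa\int_0^T|d\bar V_n(s)|\bigr)$. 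The first factor tends to zero in probability by the fluid limit \eqref{eq:fluid-E}, while the second factor is stochastically bounded by Lemma~\ref{lemma:TV}; their product therefore vanishes in probability uniformly on $[0,T]$.

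The main technical obstacle will be the careful accounting in the integration-by-parts step. Because an arrival at $a_{n,k}$ produces a simultaneous jump in $\bar E_n$ and typically also in $\bar V_n$, and hence in $H(\bar V_n)$, a covariation term $\sum_{s\le t}\Delta(H(\bar V_n))(s)\,\Delta\bar E_n(s)$ arises. Bounding $|\Delta(H(\bar V_n))(s)|\le\kappa|\Delta\bar V_n(s)|$ and using that each arrival jump of $\bar E_n$ has size $1/(n\gamma_n)$ keeps this covariation within the same total-variation estimate, so it does not contribute a new obstruction. Combining the three steps yields the stated uniform-in-time convergence in probability.
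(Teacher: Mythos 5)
Your overall strategy coincides with the paper's: center the abandonment indicators by a martingale argument, rewrite the compensator as the Stieltjes integral $\int_0^t H(\bar V_n(u-))\,\mathrm{d}\bar E_n(u)$, and control its distance from $\rho\mu\int_0^t H(\bar V_n(u))\,\mathrm{d}u$ by integration by parts, the Lipschitz bound $|\mathrm{d}H(\bar V_n)|\le\kappa\,|\mathrm{d}\bar V_n|$ from \eqref{eq:bounded-density}, the total-variation bound of Lemma~\ref{lemma:TV}, and the fluid limit \eqref{eq:fluid-E}. That second half of your argument is correct and essentially identical to the paper's, and your filtration (revealing $X_n(0)$, $E_n$, and the $N_j$ at step zero and the patience times one by one) is a legitimate choice, since the patience times are independent of everything else and $V_n(a_{n,k}-)$ is determined recursively by \eqref{eq:perturbed} from the data you place in $\mathcal{F}_{k-1}^n$.

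The gap is in the martingale step. You bound $\mathbb{E}\big[\sup_{t\le T}M_n(\gamma_n t)^2\big]$ by a constant times $\mathbb{E}[E_n(\gamma_n T)]$ and assert this is $O(n\gamma_n)$. But the paper's only hypothesis on the arrival processes is the distributional limit \eqref{eq:E}; the $E_n$ are \emph{not} assumed to be renewal processes, and no moment conditions are imposed on them. Convergence in distribution of $\tilde E_n$ gives no control on $\mathbb{E}[E_n(\gamma_n T)]$, which may be infinite for every $n$; in that case the Wald-type identity behind your variance bound and Doob's inequality both yield vacuous conclusions, and even the square-integrability of the time-changed process $M_n(\gamma_n t)=M_{n,E_n(\gamma_n t)}$ is in doubt, since $|M_{n,m}|\le m$ is the only a priori bound. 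The trouble is created precisely by indexing the martingale with the random time $E_n(\gamma_n t)$. Two repairs are available: (i) localize, i.e., stop the discrete martingale at $E_n(\gamma_n T)\wedge\lceil 2\rho\mu n\gamma_n T\rceil$, noting that $\mathbb{P}[E_n(\gamma_n T)>2\rho\mu n\gamma_n T]\to 0$ by \eqref{eq:fluid-E}, so the stopped and unstopped processes coincide with probability tending to one while the stopped one satisfies your Doob bound; or (ii) do what the paper does: define the centered sum with the deterministic index $\lfloor n\gamma_n t\rfloor$, whose quadratic variation is bounded by $t/(n\gamma_n)$ deterministically, conclude it converges to zero (the paper invokes the martingale FCLT, though Doob's inequality would suffice at this point), and only afterwards compose with $\bar E_n$ via the random time-change theorem, which requires nothing beyond $\bar E_n\Rightarrow\rho\mu e$. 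With either repair, your proof goes through.
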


Now we can prove the fluid limit for the virtual waiting
time processes in the perturbed systems.
\begin{proposition}
\label{prop:fluid-V}
Under the conditions of Theorem~\ref{theorem:many-server-virtual},
\[
\bar{V}_{n}\Rightarrow \bar{w}\chi\quad\text{as }n\rightarrow\infty.
\]
\end{proposition}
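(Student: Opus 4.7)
The plan is to pass to the limit in the fluid-scaled dynamical equation \eqref{eq:fluid-BV} and then invoke the continuous-map result of Lemma~\ref{lemma:map}. The initial condition is immediate: by \eqref{eq:VW}, $V_{n}(0)=W_{n}(0)$, hence
\[
\bar{V}_{n}(0) = \bar{w} + (n\gamma_{n})^{-1/2}\tilde{W}_{n}(0),
\]
which, via \eqref{eq:initial} and $n\gamma_{n}\to\infty$, gives $\bar{V}_{n}(0)\Rightarrow \bar{w}$.

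I then collect the asymptotics of the four ingredients appearing in \eqref{eq:fluid-BV}. First, $\bar{E}_{n}\Rightarrow\rho\mu e$ by \eqref{eq:fluid-E}. Second, Proposition~\ref{prop:FSLLN} yields $\bar{B}_{n}\to\mu e$ uniformly on compacts almost surely; combined with the stochastic boundedness of $\sup_{0\leq t\leq T}(t+\bar{V}_{n}(t))$ supplied by \eqref{eq:V-bar-bounded}, a standard truncation argument then gives
\[
\sup_{0\leq t\leq T}\bigl|\bar{B}_{n}(t+\bar{V}_{n}(t))-\mu(t+\bar{V}_{n}(t))\bigr|\Rightarrow 0, \qquad \bar{B}_{n}(\bar{V}_{n}(0))\Rightarrow\mu\bar{w}.
\]
Third, Lemma~\ref{lemma:fluid-R} gives $\bar{R}_{n}(t)=\rho\mu\int_{0}^{t}H(\bar{V}_{n}(u))\,\mathrm{d}u + o_{p}(1)$ uniformly on $[0,T]$. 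Substituting into \eqref{eq:fluid-BV} and dividing by $\mu$ produces
\[
\bar{V}_{n}(t)=\bar{V}_{n}(0)+(\rho-1)t-\rho\int_{0}^{t}H(\bar{V}_{n}(u))\,\mathrm{d}u+\eta_{n}(t),
\]
where $\sup_{0\leq t\leq T}|\eta_{n}(t)|\Rightarrow 0$.

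To close the argument, set $Y_{n}=\bar{V}_{n}-\bar{w}$ and use the defining identity $\rho H(\bar{w})=\rho-1$ to rewrite the previous display as
\[
Y_{n}(t)=f_{n}(t)-\int_{0}^{t}g(Y_{n}(u))\,\mathrm{d}u,
\]
with $f_{n}(t):=Y_{n}(0)+\eta_{n}(t)$ and $g(y):=\rho[H(y+\bar{w})-H(\bar{w})]$, where $H$ is extended by $H\equiv 0$ on $(-\infty,0)$. The bounded-density assumption \eqref{eq:bounded-density} makes $g$ Lipschitz on $\mathbb{R}$ with $g(0)=0$, so Lemma~\ref{lemma:map} applies and its continuous map $\psi$ satisfies $\psi(0)=0$ by uniqueness. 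Since $f_{n}\Rightarrow 0$ in $\mathbb{D}$, the continuous-mapping theorem gives $Y_{n}=\psi(f_{n})\Rightarrow 0$, i.e., $\bar{V}_{n}\Rightarrow\bar{w}\chi$.

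The main obstacle is the composition step: controlling $\bar{B}_{n}\circ(e+\bar{V}_{n})$ when the inner argument is only c\`adl\`ag. This is resolved by combining the \emph{uniform-on-compacts} (almost sure) convergence of $\bar{B}_{n}$ to the \emph{continuous} limit $\mu e$ with the stochastic boundedness of $t+\bar{V}_{n}(t)$ on $[0,T]$; on the high-probability event $\{\sup_{0\leq t\leq T}(t+\bar{V}_{n}(t))\leq a\}$ the composition error is dominated by $\sup_{0\leq s\leq a}|\bar{B}_{n}(s)-\mu s|$, which vanishes by Proposition~\ref{prop:FSLLN}.
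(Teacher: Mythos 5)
Your proof is correct and follows essentially the same route as the paper: rewrite the fluid-scaled dynamical equation \eqref{eq:fluid-BV} as an integral equation with uniformly vanishing error terms (your $\eta_{n}$ is exactly the paper's $\bar{I}_{n}$, controlled by \eqref{eq:fluid-E}, Proposition~\ref{prop:FSLLN}, and Lemmas~\ref{lemma:TV}--\ref{lemma:fluid-R}, including the same truncation argument for the composition $\bar{B}_{n}\circ(e+\bar{V}_{n})$), and then conclude via Lemma~\ref{lemma:map} and the continuous mapping theorem. The only cosmetic difference is that you center the process first, taking $Y_{n}=\bar{V}_{n}-\bar{w}$ with $g(y)=\rho\big(H(y+\bar{w})-H(\bar{w})\big)$ so the limit is the zero function, whereas the paper applies $\psi$ with $g(u)=\rho H(u)$ directly to $\bar{V}_{n}$ and uses the fixed-point identity $\bar{w}\chi=\psi(\bar{w}\chi+(\rho-1)e)$.
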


\begin{proof}
By \eqref{eq:fluid-BV}, we obtain
\[
\bar{V}_{n}(t)=\bar{V}_{n}(0)+\bar{I}_{n}(t)+(\rho-1)t-\rho\int_{0}^{t}
H(\bar{V}_{n}(u))\,\mathrm{d}u,
\]
where
\begin{align*}
\bar{I}_{n}(t) &
=\frac{1}{\mu}\big(\bar{B}_{n}(\bar{V}_{n}(0))-\mu\bar{V}_{n}(0)\big)
-\frac{1}{\mu}\big(\bar{B}_{n}(t+\bar{V}_{n}(t))-\mu(t+\bar{V}_{n}(t))\big)
+\frac{1}{\mu}(\bar{E}_{n}(t)-\rho\mu t)\\
& \quad -\frac{1}{\mu}\Big(\bar{R}_{n}(t)-\rho\mu\int_{0}^{t}H(\bar{V}_{n}(u))
\,\mathrm{d}u\Big).
\end{align*}
Then, $\bar{V}_{n}=\psi(\bar{V}_{n}(0)\chi+\bar{I}_{n}+(\rho-1)e)$, where $ \psi
$ is the map defined by \eqref{eq:map} with $g(u)=\rho H(u)$ for $
u\in\mathbb{R} $. By \eqref{eq:bounded-density}, $ H $ is Lipschitz continuous
with $ H(0)=0 $, so $ \psi $ is a continuous map. Because
$\bar{w}\chi=\psi(\bar{w}\chi+(\rho-1)e)$, Lemma~\ref{lemma:map} and the
continuous mapping theorem (see, e.g., Theorem~5.2 in \citet{ChenYao01}) will
lead to the assertion once we prove $\bar{V}_{n}(0)\Rightarrow \bar{w}$ and
$\bar{I} _{n}\Rightarrow0$ as $n\rightarrow\infty$.

The convergence of $\bar{V}_{n}(0)$ follows from \eqref{eq:initial} and
\eqref{eq:VW}, and the convergence of $\bar{I}_{n}$ follows from
\eqref{eq:fluid-E}, Proposition~\ref{prop:FSLLN}, and
Lemmas~\ref{lemma:map}--\ref{lemma:fluid-R}.
\end{proof}

The fluid limit allows us to establish the asymptotic equivalence between the perturbed systems and the original queues. Put $ \bar{\tau}_{n} = \tau_{n}/\gamma_{n} $, which is the instant when the first idle server appears in
the $ n $th time-scaled perturbed system. The next proposition states that $ \bar{\tau}_{n}  $ goes to infinity in probability as $ n $ increases.

\begin{proposition}
\label{prop:tau_n}
Under the conditions of Theorem~\ref{theorem:many-server-virtual},
\[
\lim_{n\rightarrow\infty}\mathbb{P}[\bar{\tau}_{n}\leq T] =0\quad
\mbox{for all }T>0.
\]
\end{proposition}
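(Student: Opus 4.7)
The plan is to deduce the proposition directly from the fluid limit in Proposition~\ref{prop:fluid-V}. Because $\rho>1$, the equilibrium level $\bar{w}=H^{-1}((\rho-1)/\rho)$ is strictly positive, and the deterministic limit $\bar{w}\chi$ stays bounded away from zero on $[0,\infty)$. It therefore suffices to argue that, with probability tending to one, $\bar{V}_n$ remains close to $\bar{w}$ throughout $[0,T]$ and hence cannot reach zero before time $T$.

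First I would observe that $V_n$ is a right-continuous process with left limits and that, whenever $\tau_n$ is finite, $V_n(\tau_n)=0$ (the virtual waiting time decreases at rate one between jumps, so its hitting time of zero is attained). Rescaling in time by $\gamma_n$ gives $\bar{V}_n(\bar{\tau}_n)=0$ on $\{\bar{\tau}_n\leq T\}$, so
\[
\{\bar{\tau}_n\leq T\}\subseteq\Big\{\inf_{0\leq t\leq T}\bar{V}_n(t)\leq \tfrac{\bar{w}}{2}\Big\}.
\]
Next, Proposition~\ref{prop:fluid-V} gives $\bar{V}_n\Rightarrow\bar{w}\chi$ in $\mathbb{D}$ under the Skorokhod $J_1$ topology. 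Since the limit is a continuous (in fact, constant) deterministic function, convergence in the $J_1$ topology is equivalent to uniform convergence on $[0,T]$ in probability (see, e.g., \citet{Billingsley99}), so
\[
\sup_{0\leq t\leq T}|\bar{V}_n(t)-\bar{w}|\Rightarrow 0 \quad\text{as }n\to\infty.
\]
Combining this with the inclusion above,
\[
\mathbb{P}[\bar{\tau}_n\leq T]\leq\mathbb{P}\Big[\sup_{0\leq t\leq T}|\bar{V}_n(t)-\bar{w}|\geq \tfrac{\bar{w}}{2}\Big]\rightarrow 0,
\]
which is the required conclusion.

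The proof reduces to a short one-step argument once Proposition~\ref{prop:fluid-V} is in hand, so there is no serious obstacle. The only point that merits care is the strict positivity of $\bar{w}$, which relies on the assumed density $f_H$ being strictly positive and on $\rho>1$; this is what allows the ``$\bar{w}/2$ buffer'' between the fluid level and zero and makes the upgrade from $J_1$ convergence to uniform convergence on $[0,T]$ useful.
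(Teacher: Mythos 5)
Your proof is correct and takes essentially the same approach as the paper: both reduce the event $\{\bar{\tau}_{n}\leq T\}$ to the event that $\inf_{0\leq t\leq T}\bar{V}_{n}(t)$ is at (or near) zero and then invoke the fluid limit $\bar{V}_{n}\Rightarrow\bar{w}\chi$ of Proposition~\ref{prop:fluid-V}. The details you spell out, namely the strict positivity of $\bar{w}$ and the upgrade from $J_{1}$ convergence to a deterministic continuous limit to uniform convergence in probability on $[0,T]$, are precisely what the paper's two-line proof leaves implicit.
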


\begin{proof}
By \eqref{eq:tau-n} and \eqref{eq:bar-VR}, $\bar{\tau
}_{n}=\inf\{t\geq0:\bar{V}_{n}(t) = 0\}$, which yields%
\[
\mathbb{P}[\bar{\tau}_{n}\leq T]=\mathbb{P}\Big[\inf_{0\leq t\leq T}\bar
{V}_{n}(t) = 0\Big].
\]
Then, the assertion follows from Proposition~\ref{prop:fluid-V}.
\end{proof}
\begin{remark}
By \eqref{eq:w-bar}, \eqref{eq:VW}, and Propositions~\ref{prop:fluid-V} and~\ref{prop:tau_n}, we obtain
\begin{equation}
\bar{W}_{n} \Rightarrow \bar{w}\chi\quad\mbox{as }n\rightarrow\infty,
\label{eq:Wn-bar-limit}
\end{equation}
which is the fluid limit for the virtual waiting time processes in the original
queues.
\end{remark}

The diffusion-scaled virtual waiting time process in the $ n $th perturbed
system is defined by
\begin{equation}
\tilde{V}_{n}(t)=\sqrt{\frac{n}{\gamma_{n}}}(V_{n}(\gamma_{n}t)-
\gamma_{n}\bar{w}).
\label{eq:V-tilde}
\end{equation}
Then,
\begin{equation}
	\tilde{W}_{n}(t)=\tilde{V}_{n}(t)\quad\text{for }0\leq t\leq\bar{\tau}_{n}.
	\label{eq:tilde-WV}
\end{equation}
Proposition~\ref{prop:tau_n} implies that $\tilde{W}_{n}$ and $\tilde{V}_{n}$
are asymptotically equal over any finite time interval. This allows us to
consider the diffusion limit of $\tilde{V}_{n}$ in order to obtain that of $
\tilde{W}_{n} $.

For notational convenience, write $ \bar{a}_{n,k} = a_{n,k}/\gamma_{n} $ and $ \bar{\zeta}_{n,k}/\gamma_{n} $, which are the scaled arrival and patience times of the $ k $th customer. Put
\[
\tilde{R}_{n}(t)=\frac{1}{\sqrt{n\gamma_{n}}}\sum_{k=1}^{E_{n}(\gamma_{n}
t)}\big(1_{\{\zeta_{n,k}\leq
V_{n}(a_{n,k}-)\}}-\Theta_{n}(V_{n}(a_{n,k}-))\big).
\]
By \eqref{eq:B_tilde}, \eqref{eq:bar-VR}, and \eqref{eq:V-tilde}, we can write \eqref{eq:BV} into the diffusion-scaled
form
\begin{equation}
\tilde{V}_{n}(t)=\tilde{V}_{n}(0)+\tilde{M}_{n}(t)-\tilde{\Upsilon}
_{n}(t)-\rho f_{H}(\bar{w})\int_{0}^{t}\tilde{V}_{n}(u)\,\mathrm{d}u,
\label{eq:Vn-diffusion}
\end{equation}
where%
\begin{equation}
\tilde{M}_{n}(t)=\frac{1}{\mu}\big(\tilde{B}_{n}(\bar{V}_{n}(0))-\tilde{B}_{n}(t+\bar{V}_{n}(t))-\tilde{R}_{n}(t)\big)+\frac{1}{\rho\mu}\tilde{E}_{n}(t)
\label{eq:M-tilde}
\end{equation}
and
\begin{equation}
\tilde{\Upsilon}_{n}(t)=\frac{1}{\mu\sqrt{n\gamma_{n}}}\sum_{k=1}
^{E_{n}(\gamma_{n}t)}\left(  H(\bar{V}_{n}(\bar{a}_{n,k}-))-H(\bar{w})\right)
-\rho f_{H}(\bar{w})\int_{0}^{t}\tilde{V}_{n}(u)\,\mathrm{d}u.
\label{eq:Upsilon}
\end{equation}
Equations \eqref{eq:Vn-diffusion}--\eqref{eq:Upsilon} follow equations
(16)--(17) in \citet{HuangETAL14}.

To obtain the limit process of $ \tilde{V}_{n} $, we should first obtain the limit processes of $ \tilde{M}_{n} $ and $ \tilde{\Upsilon}_{n} $. The convergence
of $ \tilde{M}_{n} $ can be deduced by the next lemma, which states the
joint convergence of the diffusion-scaled arrival, service completion, and
abandonment processes in the perturbed systems.
\begin{lemma}
\label{lemma:EBR}
Under the conditions of Theorem~\ref{theorem:many-server-virtual},
\[
(\tilde{E}_{n},\tilde{B}_{n},\tilde{R}_{n})\Rightarrow(\hat{E},\hat
{B},\hat{R})\quad\text{as }n\rightarrow\infty,
\]
where $\hat{E}$, $\hat{B}$, and $\hat{R}$ are independent
driftless Brownian motions with variances $\rho\mu c_{a}^{2}$, $\mu
c_{s}^{2}%
$, and $(\rho-1)\mu/\rho$, respectively.
\end{lemma}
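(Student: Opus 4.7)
The plan is to establish marginal convergence of each of the three processes and then upgrade to joint convergence with mutually independent Brownian limits. The convergence $\tilde{E}_n\Rightarrow\hat{E}$ is assumption \eqref{eq:E}, and $\tilde{B}_n\Rightarrow\hat{B}$ is exactly Theorem~\ref{theorem:FCLT}. Since the arrival sequence generating $E_n$ is independent of the double service sequence $\{\xi_{j,k}\}$ generating $B_n$, the joint statement $(\tilde{E}_n,\tilde{B}_n)\Rightarrow(\hat{E},\hat{B})$ with $\hat{E}\perp\hat{B}$ follows by standard arguments for convergence of independent sequences to continuous limits. The substantive work is then to handle $\tilde{R}_n$ and to argue independence of $\hat{R}$ from $(\hat{E},\hat{B})$.

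To treat $\tilde{R}_n$, I would set up a martingale representation. Let $\{\mathcal{F}_{n,k}\}$ be the discrete-time filtration generated by the entire arrival and service processes together with the first $k-1$ patience times $\zeta_{n,1},\ldots,\zeta_{n,k-1}$. Then $V_n(a_{n,k}-)$ is $\mathcal{F}_{n,k}$-measurable while $\zeta_{n,k}$ is independent of $\mathcal{F}_{n,k}$, so
\[
\xi_{n,k}=1_{\{\zeta_{n,k}\le V_n(a_{n,k}-)\}}-\Theta_n(V_n(a_{n,k}-))
\]
is a bounded $\{\mathcal{F}_{n,k+1}\}$-martingale difference. Consequently $\tilde{R}_n$ is a time-changed martingale with predictable quadratic variation
\[
\langle\tilde{R}_n\rangle(t)=\frac{1}{n\gamma_n}\sum_{k=1}^{E_n(\gamma_n t)}\Theta_n(V_n(a_{n,k}-))\bigl(1-\Theta_n(V_n(a_{n,k}-))\bigr).
\]
By Proposition~\ref{prop:fluid-V}, $\bar V_n\Rightarrow\bar w\chi$, so $\Theta_n(V_n(a_{n,k}-))=H(\bar V_n(\bar a_{n,k}-))$ converges in probability to $H(\bar w)=(\rho-1)/\rho$ uniformly over the arrival epochs in any bounded window; combined with $\bar E_n\Rightarrow\rho\mu e$ from \eqref{eq:fluid-E}, this yields $\langle\tilde R_n\rangle(t)\Rightarrow(\rho-1)\mu t/\rho$ uniformly on compacts. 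Since the jumps of $\tilde R_n$ are bounded by $1/\sqrt{n\gamma_n}\to 0$, the Lindeberg condition is automatic, and the martingale FCLT delivers $\tilde R_n\Rightarrow\hat R$ with $\hat R$ a driftless Brownian motion of variance $(\rho-1)\mu/\rho$.

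For the joint convergence with independence, the key observation is that the martingale innovations $\zeta_{n,k}$ driving $\tilde R_n$ are independent of the sigma-algebra generated by the arrival and service randomness, while $\langle\tilde R_n\rangle$ converges to a \emph{deterministic} limit. Together these force any subsequential joint limit of $(\tilde E_n,\tilde B_n,\tilde R_n)$ to have its third component equal to a Brownian motion of variance $(\rho-1)\mu/\rho$ with vanishing cross-quadratic-variation with the first two, hence independent of $(\hat E,\hat B)$. Combined with tightness of the joint sequence (which follows from the marginal tightness already established), this uniquely identifies the joint limit as $(\hat E,\hat B,\hat R)$ with mutually independent coordinates. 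I expect the main obstacle to be precisely this last step: the dependence of $\tilde R_n$ on $(\tilde E_n,\tilde B_n)$ through the random integrator $V_n(a_{n,k}-)$ must be shown to dissolve asymptotically, and the cleanest route is through the fluid limit in Proposition~\ref{prop:fluid-V}, which makes $\langle\tilde R_n\rangle$ deterministic in the limit and is the precise mechanism by which the independence of patience times from arrivals and services is inherited by the Brownian limits.
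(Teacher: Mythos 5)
Your treatment of the marginal behavior is essentially sound and parallels the paper's machinery: the martingale setup for $\tilde R_n$ under a filtration whose time-zero $\sigma$-field contains the arrival and service randomness, the predictable quadratic variation formula, its convergence to $(\rho-1)\mu t/\rho$ via Proposition~\ref{prop:fluid-V} and \eqref{eq:fluid-E}, and the appeal to the martingale FCLT. The genuine gap is the final step, and it is not a technicality: the claim that a deterministic limiting quadratic variation together with innovations independent of the arrival/service randomness ``forces'' any subsequential limit of $\tilde R_n$ to be independent of $(\hat E,\hat B)$ is exactly what requires proof, and the mechanism you offer does not deliver it. Vanishing cross-quadratic-variation yields independence only through L\'evy's characterization, which requires all components to be continuous local martingales with respect to a \emph{common} filtration; but $\tilde E_n$ and $\tilde B_n$ are not martingales ($\tilde E_n$ is only assumed to satisfy \eqref{eq:E}, and a centered superposition of non-Poisson renewal processes is not a martingale), and quadratic covariation is not a continuous functional on $\mathbb{D}^{2}$, so nothing about the cross-brackets passes to the limit without additional hypotheses of P-UT type. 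The natural repair---conditioning on the arrival and service randomness and proving a conditional martingale FCLT, or invoking stable/mixing convergence---also does not come for free: stable CLTs of Hall--Heyde type require a nesting condition on the filtrations across $n$, which fails here because each $n$ indexes a different system. So the step where the dependence of $\tilde R_n$ on $(\tilde E_n,\tilde B_n)$ through the random thresholds $V_n(a_{n,k}-)$ is supposed to ``dissolve'' remains unproved.

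The paper closes precisely this gap with a comparison device absent from your proposal. It introduces
\[
\tilde{Z}^{\prime}_{n}(t)=\frac{1}{\sqrt{n\gamma_{n}}}\sum_{k=1}^{\lfloor n\gamma_{n}t\rfloor}\big(1_{\{\bar{\zeta}_{n,k}\leq \bar{w}\}}-H(\bar{w})\big),
\]
the same centered indicator sum but with the random threshold $\bar V_n(\bar a_{n,k}-)$ replaced by the deterministic constant $\bar w$. This process is a function of the patience times alone, so it is \emph{exactly} independent of $(\tilde E_n,\tilde B_n)$ for every finite $n$; Donsker's theorem gives $\tilde Z'_n\Rightarrow\hat Z$ with variance $(\rho-1)/\rho^{2}$, and joint convergence of $(\tilde E_n,\tilde B_n,\tilde Z'_n)$ to mutually independent limits is then immediate from the marginal convergences. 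The martingale FCLT is used only for the error: the quadratic variation of the martingale $\tilde Z_n-\tilde Z'_n$ has expectation bounded by $t\,\mathbb{E}\big[\sup_{1\leq k\leq n\gamma_n t}|H(\bar V_n(\bar a_{n,k}-))-H(\bar w)|\big]$, which vanishes by Proposition~\ref{prop:fluid-V} and \eqref{eq:bounded-density}, so $\tilde Z_n-\tilde Z'_n\Rightarrow 0$. The convergence-together theorem and the random time-change theorem (using $\tilde R_n=\tilde Z_n\circ\bar E_n$ and \eqref{eq:fluid-E}) then finish the proof. In short, independence is secured at the prelimit level by comparison with a process that genuinely depends only on the patience times, rather than asymptotically by a quadratic-variation argument; this is the idea your proof needs to be completed.
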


The process $ \tilde{\Upsilon}_{n} $ turns out to be an error term converging to zero.

\begin{lemma}
\label{lemma:HV}
Under the conditions of Theorem~\ref{theorem:many-server-virtual},
\[
\tilde{\Upsilon}_{n}\Rightarrow0\quad\text{as }n\rightarrow\infty.
\]
\end{lemma}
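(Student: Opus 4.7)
The plan is to split $\tilde{\Upsilon}_{n}$ into a Taylor-remainder piece and a stochastic-integral piece, bound each separately, and close the argument with a Gronwall estimate. Write $\alpha_{n}=\sqrt{n\gamma_{n}}$. Starting from the identity $E_{n}(\gamma_{n}u)=\lambda_{n}\gamma_{n}u+\alpha_{n}\tilde{E}_{n}(u)$, the arrival-counting measure decomposes as $dE_{n}(\gamma_{n}u)=\lambda_{n}\gamma_{n}\,du+\alpha_{n}\,d\tilde{E}_{n}(u)$. Rewriting $\sum_{k=1}^{E_{n}(\gamma_{n}t)}H(\bar{V}_{n}(\bar{a}_{n,k}-))=\int_{0}^{t}H(\bar{V}_{n}(u-))\,dE_{n}(\gamma_{n}u)$ accordingly, using $H(\bar{w})=(\rho-1)/\rho$ to cancel the deterministic drift, and combining with the $\rho f_{H}(\bar{w})\int_{0}^{t}\tilde{V}_{n}(u)\,du$ term in the definition of $\tilde{\Upsilon}_{n}$, I would obtain the decomposition $\tilde{\Upsilon}_{n}(t)=A_{n}(t)+B_{n}(t)$ with
\[
A_{n}(t)=\rho\alpha_{n}\int_{0}^{t}\bigl[H(\bar{V}_{n}(u))-H(\bar{w})-f_{H}(\bar{w})(\bar{V}_{n}(u)-\bar{w})\bigr]\,du
\]
and
\[
B_{n}(t)=\frac{1}{\mu}\int_{0}^{t}\bigl[H(\bar{V}_{n}(u-))-H(\bar{w})\bigr]\,d\tilde{E}_{n}(u).
\]
The large factor $\rho\alpha_{n}$ in front of $A_{n}$ is designed to be absorbed by the second-order smallness of the integrand.

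For $A_{n}$ I would invoke Proposition~\ref{prop:fluid-V}. Set $r_{n}=\sup_{0\le u\le T}|\bar{V}_{n}(u)-\bar{w}|$, which converges to zero in probability by the fluid limit. Using the identity $H(x)-H(\bar{w})-f_{H}(\bar{w})(x-\bar{w})=\int_{\bar{w}}^{x}[f_{H}(y)-f_{H}(\bar{w})]\,dy$ and continuity of $f_{H}$ at $\bar{w}$, the integrand in $A_{n}$ is bounded by $\omega(r_{n})|\bar{V}_{n}(u)-\bar{w}|$, where $\omega(r)=\sup_{|y-\bar{w}|\le r}|f_{H}(y)-f_{H}(\bar{w})|\to 0$ as $r\to 0$. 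Since $\alpha_{n}|\bar{V}_{n}(u)-\bar{w}|=|\tilde{V}_{n}(u)|$, this gives $|A_{n}(t)|\le\rho\,\omega(r_{n})\int_{0}^{t}|\tilde{V}_{n}(u)|\,du$ with $\omega(r_{n})\Rightarrow 0$.

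For $B_{n}$ the integrand is bounded in sup by $\kappa r_{n}\Rightarrow 0$, while the integrator $\tilde{E}_{n}$ converges weakly to the Brownian motion $\hat{E}$ by Lemma~\ref{lemma:EBR} and its quadratic variation $[\tilde{E}_{n}](t)=E_{n}(\gamma_{n}t)/\alpha_{n}^{2}$ is tight (indeed tends to $\rho\mu t$ by the fluid LLN). Appealing to a continuous-mapping theorem for stochastic integrals against weakly converging ``good'' semimartingales (of Kurtz--Protter type), one gets $B_{n}\Rightarrow\mu^{-1}\int_{0}^{\cdot}[H(\bar{w})-H(\bar{w})]\,d\hat{E}(u)=0$. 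This step is the main obstacle: because $\tilde{E}_{n}$ has total variation of order $\alpha_{n}\to\infty$, a naive integration-by-parts bound only gives $B_{n}=O_{p}(1)$, and one must exploit the semimartingale structure of $\tilde{E}_{n}$ and the tightness of $[\tilde{E}_{n}]$ rather than treat $d\tilde{E}_{n}$ as an ordinary signed measure.

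Finally, I would close the loop by Gronwall. Substituting the two bounds into \eqref{eq:Vn-diffusion} gives
\[
|\tilde{V}_{n}(t)|\le|\tilde{V}_{n}(0)|+|\tilde{M}_{n}(t)|+|B_{n}(t)|+\bigl[\rho f_{H}(\bar{w})+\rho\,\omega(r_{n})\bigr]\int_{0}^{t}|\tilde{V}_{n}(u)|\,du.
\]
Since $|\tilde{V}_{n}(0)|$, $\sup|\tilde{M}_{n}|$ (via Lemma~\ref{lemma:EBR} and \eqref{eq:M-tilde}), and $\sup|B_{n}|$ are each $O_{p}(1)$, Gronwall's inequality yields $\sup_{0\le t\le T}|\tilde{V}_{n}(t)|=O_{p}(1)$. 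Consequently $\sup_{t\le T}|A_{n}(t)|\le\rho\,\omega(r_{n})\cdot T\cdot\sup|\tilde{V}_{n}|\Rightarrow 0$, and combining with $B_{n}\Rightarrow 0$ gives $\tilde{\Upsilon}_{n}\Rightarrow 0$, as required.
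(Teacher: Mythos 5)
Your decomposition is exactly the paper's own: your $B_{n}$ coincides with the paper's intermediate process $\tilde{\Upsilon}_{n}^{\prime}$, your $A_{n}$ is $\tilde{\Upsilon}_{n}-\tilde{\Upsilon}_{n}^{\prime}$, and your handling of $A_{n}$ (a Taylor-remainder bound via the modulus of continuity of $f_{H}$ at $\bar{w}$, then Gronwall to get stochastic boundedness of $\tilde{V}_{n}$) mirrors the paper's argument with the function $\varphi$. The genuine gap is in the step you yourself flag as the main obstacle. You invoke a Kurtz--Protter stochastic-integral convergence theorem with $\tilde{E}_{n}$ as the integrator, which requires $\{\tilde{E}_{n}\}$ to be a \emph{good} (uniformly tight) sequence of semimartingales, and you justify this only by tightness of the quadratic variation $[\tilde{E}_{n}](t)=E_{n}(\gamma_{n}t)/(n\gamma_{n})$. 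That is not sufficient: $\tilde{E}_{n}$ is a pure-jump finite-variation process whose total variation is of order $\sqrt{n\gamma_{n}}$, and it is not a (local) martingale with respect to any usable filtration unless arrivals are Poisson, so its quadratic variation controls nothing. Uniform tightness requires a decomposition $\tilde{E}_{n}=M_{n}+A_{n}$ with $[M_{n}]$ and $\int|\mathrm{d}A_{n}|$ stochastically bounded, and the paper's hypotheses supply no such structure --- the arrival processes are not assumed renewal; only the FCLT \eqref{eq:E} is assumed. Worse, even for renewal non-Poisson arrivals the UT property genuinely fails: the predictable integrand that follows the sign of (compensator intensity minus $\lambda_{n}$) is adapted to any filtration rich enough to make $H(\bar{V}_{n}(\cdot-))$ adapted, and integrating it against $\mathrm{d}\tilde{E}_{n}$ produces a term of order $\sqrt{n\gamma_{n}}$. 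Smallness of your integrand in sup norm does not save you: without UT, an integrand of vanishing sup norm that oscillates in step with the integrator can produce a nonvanishing covariation (It\^{o}--Stratonovich-type counterexamples), and since $\bar{V}_{n}$ is constructed from $E_{n}$ such correlation cannot be excluded. There is also a secondary issue you do not address: $\bar{V}_{n}(u-)$ anticipates service completions on $[u,u+\bar{V}_{n}(u)]$, so adaptedness of the integrand must be arranged by enlarging the filtration by the entire service mechanism.

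The paper closes exactly this gap by reversing the roles of integrand and integrator, and your dismissal of integration by parts (``only gives $B_{n}=O_{p}(1)$'') is where you go astray. Integrating $\tilde{\Upsilon}_{n}^{\prime}$ by parts yields a boundary term $\mu^{-1}(H(\bar{V}_{n}(t))-H(\bar{w}))\tilde{E}_{n}(t)$, which vanishes by Proposition~\ref{prop:fluid-V} and \eqref{eq:E}; a jump-covariation term bounded by $\sup_{0\leq t\leq T}|\Delta\tilde{E}_{n}(t)|\int_{0}^{T}|\mathrm{d}H(\bar{V}_{n}(t))|$, which vanishes because $\hat{E}$ has continuous paths; and the integral $\int_{0}^{t}\tilde{E}_{n}(u-)\,\mathrm{d}H(\bar{V}_{n}(u))$, in which the integrators $H\circ\bar{V}_{n}$ are now \emph{finite-variation} processes whose UT property is free: by \eqref{eq:bounded-density} and Lemma~\ref{lemma:TV}, $\int_{0}^{T}|\mathrm{d}H(\bar{V}_{n}(t))|\leq\kappa\int_{0}^{T}|\mathrm{d}\bar{V}_{n}(t)|$ is stochastically bounded. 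Kurtz--Protter Theorem~7.10 then applies to the joint convergence $(\tilde{E}_{n},H\circ\bar{V}_{n})\Rightarrow(\hat{E},H(\bar{w})\chi)$, and the limiting integral is zero because the limiting integrator is constant --- not merely $O_{p}(1)$. So the crude total-variation bound is not the end of the integration-by-parts route; combined with Lemma~\ref{lemma:TV} it is precisely what makes the UT machinery applicable under the paper's weak assumptions on arrivals. To repair your proof, replace your $B_{n}$ step with this argument; everything else you wrote then goes through.
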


Using these convergence results, we can obtain the diffusion limit for the perturbed systems.
\begin{proposition}
\label{prop:virtual-limit-perturbed}
Under the conditions of Theorem~\ref{theorem:many-server-virtual},
\[
\tilde{V}_{n}\Rightarrow\hat{W}\quad\mbox{as }n\rightarrow\infty,
\]
where $ \hat{W} $ is the OU process given by \eqref{eq:OU-limit}.
\end{proposition}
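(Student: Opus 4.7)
The plan is to view equation \eqref{eq:Vn-diffusion} as an instance of the integral equation in Lemma~\ref{lemma:map}, and then obtain the conclusion by establishing joint convergence of the driving terms and applying the continuous mapping theorem. Specifically, take $g(u) = \rho f_{H}(\bar{w})\,u$, which is linear (hence Lipschitz) with $g(0)=0$. With $\psi:\mathbb{D}\to\mathbb{D}$ the resulting continuous map, \eqref{eq:Vn-diffusion} reads
\[
\tilde{V}_{n} \;=\; \psi\bigl(\tilde{V}_{n}(0)\chi + \tilde{M}_{n} - \tilde{\Upsilon}_{n}\bigr),
\]
and similarly $\hat{W} = \psi(\hat{W}(0)\chi + \hat{M})$ by \eqref{eq:OU-limit}. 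So it suffices to show that the input converges in distribution to $\hat{W}(0)\chi + \hat{M}$ jointly, and then invoke continuity of $\psi$.

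The initial condition is easy: from \eqref{eq:tilde-WV} at $t=0$ together with \eqref{eq:initial}, we have $\tilde{V}_{n}(0) = \tilde{W}_{n}(0) \Rightarrow \hat{W}(0)$. By Lemma~\ref{lemma:HV}, $\tilde{\Upsilon}_{n}\Rightarrow 0$, so it remains to identify the limit of $\tilde{M}_{n}$ in \eqref{eq:M-tilde}. Here I use Lemma~\ref{lemma:EBR}, which gives joint convergence $(\tilde{E}_{n},\tilde{B}_{n},\tilde{R}_{n}) \Rightarrow (\hat{E},\hat{B},\hat{R})$ to three \emph{independent} driftless Brownian motions with variances $\rho\mu c_{a}^{2}$, $\mu c_{s}^{2}$, and $(\rho-1)\mu/\rho$. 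Combined with the fluid limit $\bar{V}_{n}\Rightarrow \bar{w}\chi$ from Proposition~\ref{prop:fluid-V}, a random time change argument (e.g., Theorem~3.2 in \citet{Billingsley99}) yields
\[
\tilde{B}_{n}(\bar{V}_{n}(0)) \Rightarrow \hat{B}(\bar{w}),
\qquad
\tilde{B}_{n}(e+\bar{V}_{n}) \Rightarrow \hat{B}(e+\bar{w}\chi),
\]
jointly with $(\tilde{E}_{n},\tilde{R}_{n})\Rightarrow(\hat{E},\hat{R})$, where the crucial point is that the time-change limit $\bar{w}\chi$ is \emph{deterministic} and continuous, so joint convergence is preserved. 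Consequently,
\[
\tilde{M}_{n} \;\Rightarrow\; \hat{M} := \tfrac{1}{\mu}\bigl(\hat{B}(\bar{w}) - \hat{B}(e+\bar{w}\chi) - \hat{R}\bigr) + \tfrac{1}{\rho\mu}\hat{E},
\]
and a direct variance computation, using that $\hat{E},\hat{B},\hat{R}$ are independent and that the increment process $\hat{B}(t+\bar{w})-\hat{B}(\bar{w})$ is itself a Brownian motion with variance $\mu c_{s}^{2}$ per unit time, gives
\[
\mathrm{Var}(\hat{M}(t)) = \Bigl(\tfrac{c_{s}^{2}}{\mu} + \tfrac{c_{a}^{2}}{\rho\mu} + \tfrac{\rho-1}{\rho\mu}\Bigr)t = \tfrac{c_{a}^{2}+\rho c_{s}^{2}+\rho-1}{\rho\mu}\,t,
\]
matching the variance of $\hat{M}$ in Theorem~\ref{theorem:many-server-virtual}. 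Combining all the pieces, $\tilde{V}_{n}(0)\chi + \tilde{M}_{n} - \tilde{\Upsilon}_{n} \Rightarrow \hat{W}(0)\chi + \hat{M}$ jointly, and applying $\psi$ together with the continuous mapping theorem (Theorem~5.2 in \citet{ChenYao01}) gives $\tilde{V}_{n} \Rightarrow \hat{W}$.

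The main obstacle is the composition step $\tilde{B}_{n}\circ(e+\bar{V}_{n})$: $\tilde{B}_{n}$ and $\bar{V}_{n}$ are built from overlapping data (the same renewal processes drive both), so one cannot invoke independence to justify the random time change. The key that unlocks the argument is that the fluid limit of $\bar{V}_{n}$ is the deterministic, continuous process $\bar{w}\chi$, so the composition map is continuous at the limit point and joint convergence automatically passes through. A secondary bookkeeping issue is ensuring that the three separate convergences $\tilde{V}_{n}(0)\Rightarrow\hat{W}(0)$, $\tilde{M}_{n}\Rightarrow\hat{M}$, and $\tilde{\Upsilon}_{n}\Rightarrow 0$ can be combined into a single joint convergence — this follows from Slutsky's theorem because two of the three limits are random only through the common ingredients already packaged by Lemma~\ref{lemma:EBR} and \eqref{eq:initial}.
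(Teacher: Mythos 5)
Your proposal is correct and follows essentially the same route as the paper's own proof: identify $\tilde{V}_{n}=\psi(\tilde{V}_{n}(0)\chi+\tilde{M}_{n}-\tilde{\Upsilon}_{n})$ with $g(u)=\rho f_{H}(\bar{w})u$ in Lemma~\ref{lemma:map}, obtain $\tilde{V}_{n}(0)\Rightarrow\hat{W}(0)$ from \eqref{eq:initial} and \eqref{eq:tilde-WV}, get $\tilde{M}_{n}\Rightarrow\hat{M}$ from Proposition~\ref{prop:fluid-V} and Lemma~\ref{lemma:EBR}, kill $\tilde{\Upsilon}_{n}$ via Lemma~\ref{lemma:HV}, and finish with the continuous mapping theorem. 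The random time-change argument and variance bookkeeping you spell out (including the observation that the deterministic limit $\bar{w}\chi$ is what legitimizes composing $\tilde{B}_{n}$ with the dependent time change $e+\bar{V}_{n}$) are precisely the details the paper compresses into its one-sentence derivation of $\tilde{M}_{n}\Rightarrow\hat{M}$, and your variance computation matches the stated $(c_{a}^{2}+\rho c_{s}^{2}+\rho-1)/(\rho\mu)$.
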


\begin{proof}
By \eqref{eq:initial} and \eqref{eq:tilde-WV}, $ \tilde{V}_{n}(0) \Rightarrow
\hat{W}(0) $ as $ n \rightarrow \infty $. Using Proposition~\ref{prop:fluid-V} and Lemma~\ref{lemma:EBR}, we obtain  $
\tilde{M}_{n} \Rightarrow \hat{M} $ as $ n \rightarrow \infty $, where $ \hat{M} $
is a driftless Brownian motion with $\hat{M}(0) = 0$ and variance $ (c_{a}^{2} + \rho c_{s}^{2} +
\rho-1)/(\rho\mu) $. Then, by Lemma~\ref{lemma:HV},
\[
\tilde{V}_{n}(0)\chi+\tilde{M}_{n}-\tilde{\Upsilon}_{n} \Rightarrow \hat{W}(0)\chi + \hat{M} \quad\mbox{as }n\rightarrow\infty.
\]
Because $ \tilde{V}_{n} = \psi(\tilde{V}_{n}(0)\chi + \tilde{M}_{n} -
\tilde{\Upsilon}_{n}) $ and $ \hat{W} = \psi(\hat{W}(0)\chi + \hat{M}) $, where
$ \psi $ is the map given by \eqref{eq:map} with $ g(u)=\rho f_{H}(\bar{w})u$
for $ u\in\mathbb{R} $, the assertion follows from Lemma~\ref{lemma:map} and
the continuous mapping theorem.
\end{proof}

Let us complete the proof of Theorem~\ref{theorem:many-server-virtual} by using the asymptotic equivalence.
\begin{proof}
[Proof of Theorem~\ref{theorem:many-server-virtual}]
By \eqref{eq:tilde-WV},
\[
\mathbb{P}\Big[\sup_{0\leq t\leq
T}|\tilde{W}_{n}(t)-\tilde{V}_{n}(t)|>0\Big]\leq\mathbb{P}
[\bar{\tau}_{n}\leq T]\quad\mbox{for }T>0.
\]
Then, Proposition~\ref{prop:tau_n} implies that $\tilde{W}_{n}-\tilde{V}_{n}
\Rightarrow 0$
as $n \rightarrow \infty$. The theorem follows from
Proposition~\ref{prop:virtual-limit-perturbed} and
the convergence-together theorem (see Theorem~5.4 in \citet{ChenYao01}).
\end{proof}

\section{Gaussian limit of queue length}
\label{sec:Proof-queue}

We prove Theorem~\ref{theorem:many-server} in this section. With a general
patience time distribution, it is difficult to prove a diffusion limit for the
queue length process in the ED regime.  \citet{HuangETAL14} proved a
one-dimensional limit process of queue length process for the $
\mbox{GI}/\mbox{M}/n+\mbox{GI} $ model with a general scaling of patience-time
distributions; they used the diffusion limit of virtual waiting time to obtain
the limit queue length process. In the present study, we consider the marginal
distribution of the queue length at a particular time, which also allows us to
infer the steady-state queue length distribution.

Let $ Q_{n}(t) = (X_{n}(t)-n)^{+}$ be the queue length in the $ n $th system at
time $ t $. Because customers are first-come first-served, those who arrived
before $ t $ must have either entered service or abandoned the system by $
t+W_{n}(t) $. In other words, customers who are waiting at $t+W_{n}(t)$ must
have arrived during $(t,t+W_{n}(t)]$ and have not abandoned the system by
$t+W_{n}(t)$. This observation yields
\[
Q_{n}(t+W_{n}(t)) = \sum_{k=E_{n}(t)+1}^{E_{n}(t+W_{n}(t))} 1_{\{\zeta_{n,k} > t+W_{n}(t)-a_{n,k}\}}.
\]
For $0\leq t<\tau_{n}$, because $Q_{n}(t+W_{n}(t))=X_{n}(t+W_{n}(t))-n$, it
follows from \eqref{eq:tilde-xn} and \eqref{eq:w-bar} that
\[
\tilde{X}_{n}(t+\bar{W}_{n}(t))=\frac{1}{\sqrt{n\gamma_{n}}}(Q_{n}(\gamma
_{n}t+W_{n}(\gamma_{n}t))-q_{n}),
\]
where $q_{n}$ is given by \eqref{eq:qn}. Then,
\[
\tilde{X}_{n}(t+\bar{W}_{n}(t))=\frac{1}{\sqrt{n\gamma_{n}}}\sum
_{k=E_{n}(\gamma_{n}t)+1}^{E_{n}(\gamma_{n}t+W_{n}(\gamma_{n}t))}
1_{\{\zeta_{n,k}>\gamma_{n}t+W_{n}(\gamma_{n}t)-a_{n,k}\}}-\sqrt{n\gamma_{n}%
}\rho\mu\int_{0}^{\bar{w}}(1-H(u))\,\mathrm{d}u.
\]
By \eqref{eq:bar-VR}, we can further decompose the scaled
queue length into
\begin{equation}
\tilde{X}_{n}(t+\bar{W}_{n}(t))=\mu\tilde{W}_{n}(t)+\tilde{G}_{n}^{\prime
}(t)+\tilde{G}_{n}^{\prime\prime}(t)+\tilde{Y}_{n}(t)+\tilde{Y}_{n}^{\prime
}(t),
\label{eq:tilde-x-t-w}
\end{equation}
where
\[
\tilde{G}_{n}'(t)   =\sqrt{n\gamma_{n}}\int_{t}^{t+\bar{w}%
}(1-H(t+\bar{w}-u))\,\mathrm{d}\bar{E}_{n}(u)-\sqrt{n\gamma_{n}}\rho\mu
\int_{t}^{t+\bar{w}}(1-H(t+\bar{w}-u))\,\mathrm{d}u,
\]

\begin{align*}
\tilde{G}_{n}''(t) &  =\frac{1}{\sqrt{n\gamma_{n}}}\sum_{k=E_{n}%
	(\gamma_{n}t)+1}^{E_{n}(\gamma_{n}t+W_{n}(\gamma_{n}t))}\big(  H(t+\bar
{W}_{n}(t)-\bar{a}_{n,k})-1_{\{\bar{\zeta}_{n,k}\leq t+\bar{W}_{n}(t)-\bar
	{a}_{n,k}\}}\big)  ,\\
\tilde{Y}_{n}(t) &  =\tilde{E}_{n}(t+\bar{W}_{n}(t))-\tilde{E}_{n}(t+\bar
{w})-\sqrt{n\gamma_{n}}\int_{t+\bar{w}}^{t+\bar{W}_{n}(t)}H(t+\bar{W}%
_{n}(t)-u)\,\mathrm{d}\bar{E}_{n}(u)\\
&  \quad+(\rho-1)\mu\tilde{W}_{n}(t)-\sqrt{n\gamma_{n}}\rho\mu\int%
_{0}^{\bar{w}}(H(\bar{W}_{n}(t)-u)-H(\bar{w}-u))\,\mathrm{d}u,\\
\tilde{Y}_{n}^{\prime}(t) &  =\sqrt{n\gamma_{n}}\int_{t}^{t+\bar{w}}%
(H(t+\bar{w}-u)-H(t+\bar{W}_{n}(t)-u))\,\mathrm{d}\bar{E}_{n}(u)\\
&  \quad-\sqrt{n\gamma_{n}}\rho\mu\int_{t}^{t+\bar{w}}(H(t+\bar{w}%
-u)-H(t+\bar{W}_{n}(t)-u))\,\mathrm{d}u.
\end{align*}

To obtain the limit distribution of $ \tilde{X}_{n}(t+\bar{W}_{n}(t)) $, we need
to analyze each term on the right side of \eqref{eq:tilde-x-t-w}. The joint
convergence of the first three terms is given by Lemma~\ref{lemma:WGG}.

\begin{lemma}
\label{lemma:WGG}
Under the conditions of Theorem~\ref{theorem:many-server-virtual}, for any fixed $ t\geq 0 $,
\[
(\tilde{W}_{n}(t),\tilde{G}^{\prime}_{n}(t),\tilde{G}''_{n}(t))
\Rightarrow (\hat{W}(t),\hat{G}^{\prime}(t),\hat{G}''(t))\quad \mbox{as
}n\rightarrow\infty,
\]
where $ \hat{W} $ is the OU process given by \eqref{eq:OU-limit},
\[
\hat{G}'(t) = \int_{t}^{t+\bar{w}} (1-H(t+\bar{w}-u))\,\mathrm{d}\hat{E}(u),
\]
and
\[
\hat{G}^{\prime\prime}(t)=\int_{0}^{\bar{w}}\sqrt{\rho\mu H(\bar
{w}-u)(1-H(\bar{w}-u))}\,\mathrm{d}\hat{S}(u).
\]
Here, $ \hat{S} $ is a standard Brownian motion that is independent of $ \hat{E} $, $ \hat{B} $, and $ \{\hat{W}(u):0\leq u \leq t\} $.
\end{lemma}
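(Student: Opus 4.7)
The plan is to handle the three coordinates separately and combine them through a conditional-independence argument that exploits the mutual independence of arrivals, service times, and patience times.

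\medskip
\noindent\textbf{Joint convergence of $\tilde{W}_{n}$ and $\tilde{G}'_{n}(t)$.} The proof of Proposition~\ref{prop:virtual-limit-perturbed} represents $\tilde{V}_{n}$ as $\psi(\tilde{V}_{n}(0)\chi + \tilde{M}_{n} - \tilde{\Upsilon}_{n})$ with $\tilde{M}_{n}$ a continuous functional of $(\tilde{E}_{n}, \tilde{B}_{n}, \tilde{R}_{n})$ via \eqref{eq:M-tilde}. Combining this with Lemma~\ref{lemma:EBR}, the continuous mapping theorem, and the asymptotic equivalence $\tilde{W}_{n}-\tilde{V}_{n}\Rightarrow 0$, we upgrade to the joint convergence $(\tilde{W}_{n}, \tilde{E}_{n}, \tilde{B}_{n}, \tilde{R}_{n}) \Rightarrow (\hat{W}, \hat{E}, \hat{B}, \hat{R})$ in $\mathbb{D}^{4}$. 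Using $\tilde{E}_{n} = \sqrt{n\gamma_{n}}(\bar{E}_{n} - \rho\mu e)$, one may write
\[
\tilde{G}'_{n}(t) = \int_{t}^{t+\bar{w}}(1-H(t+\bar{w}-u))\,\mathrm{d}\tilde{E}_{n}(u),
\]
and integration by parts gives
\[
\tilde{G}'_{n}(t) = \tilde{E}_{n}(t+\bar{w}) - \tfrac{1}{\rho}\tilde{E}_{n}(t) - \int_{t}^{t+\bar{w}} f_{H}(t+\bar{w}-u)\,\tilde{E}_{n}(u)\,\mathrm{d}u,
\]
which is a continuous functional of $\tilde{E}_{n}|_{[0, t+\bar{w}]}$. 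Continuous mapping then delivers $(\tilde{W}_{n}(t), \tilde{G}'_{n}(t)) \Rightarrow (\hat{W}(t), \hat{G}'(t))$.

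\medskip
\noindent\textbf{Conditional CLT for $\tilde{G}''_{n}(t)$.} Let $\mathcal{F}_{n}$ be the $\sigma$-algebra generated by the arrival process, all service times, and the patience times of customers arriving on or before time $\gamma_{n}t$. Then $W_{n}(\gamma_{n}t)$, the summation indices $E_{n}(\gamma_{n}t)$ and $E_{n}(\gamma_{n}t + W_{n}(\gamma_{n}t))$, and the arrival times $\{\bar{a}_{n,k}\}$ are all $\mathcal{F}_{n}$-measurable, while the relevant patience times $\{\zeta_{n,k}: k > E_{n}(\gamma_{n}t)\}$ are iid $\Theta_{n}$ and independent of $\mathcal{F}_{n}$. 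Hence, conditional on $\mathcal{F}_{n}$, $\tilde{G}''_{n}(t)$ is a sum of independent centered Bernoulli-type summands, each bounded by $1/\sqrt{n\gamma_{n}}$, with conditional variance
\[
V_{n} = \frac{1}{n\gamma_{n}}\sum_{k=E_{n}(\gamma_{n}t)+1}^{E_{n}(\gamma_{n}t+W_{n}(\gamma_{n}t))} H(u_{n,k})(1-H(u_{n,k})), \qquad u_{n,k} = t + \bar{W}_{n}(t) - \bar{a}_{n,k}.
\]
A Riemann-sum argument based on $\bar{E}_{n}\Rightarrow \rho\mu e$ together with $\bar{W}_{n}(t)\Rightarrow \bar{w}$ from Proposition~\ref{prop:fluid-V} and \eqref{eq:Wn-bar-limit}, followed by the change of variables $r = \bar{w} - s$, gives $V_{n}\Rightarrow \rho\mu\int_{0}^{\bar{w}} H(r)(1-H(r))\,\mathrm{d}r = \mathrm{Var}(\hat{G}''(t))$. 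The Lindeberg condition being trivial due to the bounded summands, showing that $\mathbb{E}[e^{i\theta\tilde{G}''_{n}(t)}\mid \mathcal{F}_{n}] \to \exp(-\theta^{2}\mathrm{Var}(\hat{G}''(t))/2)$ in probability yields $\tilde{G}''_{n}(t)\mid\mathcal{F}_{n}\Rightarrow \mathcal{N}(0, \mathrm{Var}(\hat{G}''(t)))$.

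\medskip
\noindent\textbf{Joint convergence and main obstacle.} Because the conditional limit of $\tilde{G}''_{n}(t)$ is Gaussian with a deterministic variance, $\tilde{G}''_{n}(t)$ is asymptotically independent of $\mathcal{F}_{n}$; this is verified by computing the joint characteristic function $\mathbb{E}[\exp(i\theta_{1}\tilde{G}''_{n}(t) + i\theta_{2}Y_{n})]$ for any tight $\mathcal{F}_{n}$-measurable $Y_{n}$. Since $(\tilde{W}_{n}(t), \tilde{G}'_{n}(t))$ is $\mathcal{F}_{n}$-measurable and converges jointly to $(\hat{W}(t), \hat{G}'(t))$, the three-way convergence
\[
(\tilde{W}_{n}(t), \tilde{G}'_{n}(t), \tilde{G}''_{n}(t))\Rightarrow (\hat{W}(t), \hat{G}'(t), \hat{G}''(t))
\]
follows, with $\hat{G}''(t)$ realized as an independent centered Gaussian of the stated variance---equivalently, as the stochastic integral against a Brownian motion $\hat{S}$ independent of $\hat{E}$, $\hat{B}$, and $\{\hat{W}(u):0\leq u\leq t\}$. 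The principal difficulty lies in the conditional CLT: the summation range is random and grows like $n\gamma_{n}\rho\mu\bar{w}$, so one must choose $\mathcal{F}_{n}$ carefully---rich enough to make the summation bounds and the arrival times $\bar{a}_{n,k}$ measurable, yet excluding the patience times that drive the Bernoulli randomness---in order that the Lindeberg condition, the variance convergence $V_{n}\Rightarrow\mathrm{Var}(\hat{G}''(t))$, and the asymptotic independence from the other two coordinates can all be concluded in a single argument.
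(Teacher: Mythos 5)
Your proposal is correct, and while your treatment of $\tilde{W}_{n}(t)$ and $\tilde{G}'_{n}(t)$ matches the paper's (joint convergence via Lemma~\ref{lemma:EBR}, the representation $\tilde{V}_{n}=\psi(\tilde{V}_{n}(0)\chi+\tilde{M}_{n}-\tilde{\Upsilon}_{n})$, asymptotic equivalence, and integration by parts to make $\tilde{G}'_{n}(t)$ a continuous functional of $\tilde{E}_{n}$), your handling of $\tilde{G}''_{n}(t)$ is genuinely different. The paper introduces an auxiliary martingale $\tilde{K}_{n}$ built from the patience times of customers arriving after $\gamma_{n}t$ but evaluated at \emph{deterministic} grid points $\bar{b}_{n,k}=k/(\rho\mu n\gamma_{n})$ with the \emph{deterministic} threshold $\bar{w}$, applies the martingale FCLT (quadratic variation convergence via the weak law of large numbers, jumps bounded by $1/\sqrt{n\gamma_{n}}$) to get a process-level limit $\hat{K}(u)=\int_{0}^{u/\rho\mu}\sqrt{\rho\mu H(\bar{w}-s)(1-H(\bar{w}-s))}\,\mathrm{d}\hat{S}(s)$, then shows the comparison $\tilde{K}_{n}-\tilde{K}'_{n}\Rightarrow 0$ (where $\tilde{K}'_{n}$ uses the actual arrival times and actual $\bar{W}_{n}(t)$), and finally evaluates via the random time-change theorem using $\tilde{G}''_{n}(t)=\tilde{K}'_{n}(\bar{E}_{n}(t+\bar{W}_{n}(t))-\bar{E}_{n}(t))$; independence of $\hat{S}$ from $(\hat{E},\hat{B},\hat{W})$ is manifest because $\tilde{K}_{n}$ involves only future patience times and deterministic quantities. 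You instead condition on the $\sigma$-algebra $\mathcal{F}_{n}$ generated by the arrival process, all service times, and the patience times of customers arriving by $\gamma_{n}t$ — under which the summation range, arrival times, and thresholds are measurable (correct, by FCFS the later arrivals' patience cannot affect $W_{n}(\gamma_{n}t)$) and the remaining patience times are conditionally iid — and run a conditional Lindeberg CLT with a Riemann-sum argument for the conditional variance; asymptotic independence then falls out of the fact that the conditional limit is Gaussian with deterministic variance. Your route is more elementary (no martingale FCLT, no auxiliary process or comparison step, and the random summation bounds and random thresholds are handled directly rather than by a deterministic-grid approximation), at the cost of yielding only the fixed-$t$ marginal limit rather than a process-level limit for $\tilde{K}_{n}$ — but the fixed-$t$ statement is all the lemma requires, and it is also all that is needed downstream in the proof of Theorem~\ref{theorem:many-server}. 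The two remaining details you should spell out are routine: the quantitative Lindeberg estimate applied pathwise on the high-probability event where the conditional variance is close to its limit, and the uniform-integrability-free passage from convergence in probability of the bounded conditional characteristic function to the product-form joint limit.
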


The other two random variables $ \tilde{Y}_{n}(t) $ and $ \tilde{Y}'_{n}(t) $ are error terms converging to zero.

\begin{lemma}
\label{lemma:YY}
Under the conditions of Theorem~\ref{theorem:many-server-virtual}, for any fixed $t \geq 0$,
\[
(\tilde{Y}_{n}(t),\tilde{Y}_{n}^{\prime}(t))\Rightarrow(0,0)\quad\mbox{as
}n\rightarrow\infty.
\]
\end{lemma}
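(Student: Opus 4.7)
The overall strategy exploits three established facts: the fluid limit $\bar{W}_n \Rightarrow \bar{w}\chi$ from \eqref{eq:Wn-bar-limit} implies that the perturbation $\epsilon_n := \bar{W}_n(t) - \bar{w} = \tilde{W}_n(t)/\sqrt{n\gamma_n}$ is $o_p(1)$; Theorem~\ref{theorem:many-server-virtual} supplies $\tilde{W}_n(t) = O_p(1)$; and Lemma~\ref{lemma:EBR} combined with Proposition~\ref{prop:virtual-limit-perturbed} gives joint convergence $(\tilde{E}_n, \tilde{W}_n) \Rightarrow (\hat{E}, \hat{W})$, where $\hat{E}$ has a.s.\ continuous sample paths. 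The central algebraic device is the identity $\sqrt{n\gamma_n}\,d\bar{E}_n(u) = d\tilde{E}_n(u) + \sqrt{n\gamma_n}\,\rho\mu\,du$, which converts the explicit prefactor $\sqrt{n\gamma_n}$ in each definition into a cancellation between a random Stieltjes integral and its Lebesgue compensator. Since the limit in each component is the deterministic $0$, joint convergence will follow from marginal convergence.

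For $\tilde{Y}'_n(t)$ this identity collapses the two terms into the single integral $\tilde{Y}'_n(t) = \int_t^{t+\bar{w}}\Delta H_n(u)\,d\tilde{E}_n(u)$, where $\Delta H_n(u) := H(t+\bar{w}-u) - H(t+\bar{W}_n(t)-u)$. The bound $\sup_u|\Delta H_n(u)| \leq \kappa|\epsilon_n| = O_p(1/\sqrt{n\gamma_n})$ follows from \eqref{eq:bounded-density}. Integration by parts yields
\[
\tilde{Y}'_n(t) = \bigl[\Delta H_n(u)\tilde{E}_n(u)\bigr]_t^{t+\bar{w}} - \int_t^{t+\bar{w}}\tilde{E}_n(u-)\,d\Delta H_n(u).
\]
The boundary terms are $O_p(1/\sqrt{n\gamma_n})\cdot O_p(1) = o_p(1)$. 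After the change of variable $v = t+\bar{w}-u$, the total variation of $\Delta H_n$ on $[t,t+\bar{w}]$ equals $\int_0^{\bar{w}}|f_H(v+\epsilon_n) - f_H(v)|\,dv$, which tends to $0$ in probability by the $L^1$-continuity of translation applied to the bounded density $f_H$ together with a subsequence argument; combined with $\sup_u|\tilde{E}_n(u)| = O_p(1)$, the interior integral is $o_p(1)$.

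For $\tilde{Y}_n(t)$ I would split the defining expression into three pieces. The difference $\tilde{E}_n(t+\bar{W}_n(t)) - \tilde{E}_n(t+\bar{w})$ vanishes by the random time-change lemma applied to $\tilde{E}_n \Rightarrow \hat{E}$ and $\bar{W}_n(t) \Rightarrow \bar{w}$, using continuity of $\hat{E}$. The piece $\sqrt{n\gamma_n}\int_{t+\bar{w}}^{t+\bar{W}_n(t)}H(t+\bar{W}_n(t)-u)\,d\bar{E}_n(u)$, after the $d\bar{E}_n$ identity and the substitution $v = t+\bar{W}_n(t)-u$, reduces to $\sqrt{n\gamma_n}\rho\mu\int_0^{\epsilon_n}H(v)\,dv$ plus a $d\tilde{E}_n$-integral of a bounded integrand over an interval of length $|\epsilon_n|$; both are $o_p(1)$, the first because $H(0)=0$ and $H$ is Lipschitz give $O(\epsilon_n^2)\sqrt{n\gamma_n} = O_p(1/\sqrt{n\gamma_n})$. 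The decisive piece is the pairing $(\rho-1)\mu\tilde{W}_n(t) - \sqrt{n\gamma_n}\rho\mu\int_0^{\bar{w}}[H(v+\epsilon_n) - H(v)]\,dv$ (obtained after the substitution $v = \bar{w}-u$). Fubini rewrites the inner integral as
\[
\int_0^{\bar{w}}\!\int_v^{v+\epsilon_n}\!f_H(y)\,dy\,dv = \epsilon_n H(\bar{w}) - \epsilon_n H(\epsilon_n) + \int_0^{\epsilon_n}\!y f_H(y)\,dy + \int_{\bar{w}}^{\bar{w}+\epsilon_n}\!(\bar{w}+\epsilon_n - y)f_H(y)\,dy,
\]
where each of the last three terms is bounded by $\kappa\epsilon_n^2$. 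Using $H(\bar{w}) = (\rho-1)/\rho$, the leading contribution $\sqrt{n\gamma_n}\rho\mu \cdot \epsilon_n H(\bar{w}) = (\rho-1)\mu\tilde{W}_n(t)$ exactly cancels the explicit term, while the remainder contributes $O_p(\sqrt{n\gamma_n}\,\epsilon_n^2) = O_p(\tilde{W}_n(t)^2/\sqrt{n\gamma_n}) = o_p(1)$.

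The main obstacle is executing these two Taylor-type expansions without any smoothness of $f_H$ beyond the bound in \eqref{eq:bounded-density}. For $\tilde{Y}_n$, the cancellation is driven by the exact identity $\int_0^{\bar{w}}f_H = H(\bar{w}) = (\rho-1)/\rho$ and extracted via the Fubini decomposition above, whose explicit boundary terms furnish the needed $O(\epsilon_n^2)$ remainder. For $\tilde{Y}'_n$, the control of $\mathrm{TV}(\Delta H_n)$ rests only on the continuity of translation in $L^1$ for a bounded density, not on pointwise continuity of $f_H$. Everything else is routine bookkeeping once the $d\bar{E}_n$ identity has been used to align the random and compensator terms.
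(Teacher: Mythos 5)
Your proof is correct in substance, but at the two decisive technical steps you take genuinely different routes from the paper, and both are worth noting. For $\tilde{Y}_n'(t)$, the paper makes the same reduction to $\int_t^{t+\bar w}\check H_n(u)\,\mathrm{d}\tilde E_n(u)$ (its $\check H_n$ is your $\Delta H_n$) and the same integration by parts, but then kills the interior integral by invoking the uniform-tightness theory of Kurtz and Protter (their Theorem~7.10), using only that $\check H_n\Rightarrow 0$ with stochastically bounded total variation. You instead bound it by $\sup_u|\tilde E_n(u)|\cdot\mathrm{TV}(\Delta H_n)$ and show $\mathrm{TV}(\Delta H_n)=\int_0^{\bar w}|f_H(v+\epsilon_n)-f_H(v)|\,\mathrm{d}v\Rightarrow0$ via $L^1$-continuity of translation; this is more elementary and self-contained, at the price of being specific to the finite-variation structure here. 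For the third piece of $\tilde Y_n(t)$, the paper writes the difference as $\rho\mu\tilde W_n(t)\int_0^{\bar w}\bigl(f_H(\bar w-u)-\varphi(\bar w-u,\epsilon_n)\bigr)\mathrm{d}u$, where $\varphi$ is the difference quotient of $H$, and applies dominated convergence (tacitly using that the difference quotient converges to $f_H$ at a.e.\ point, which is all DCT needs). Your Fubini decomposition with explicit boundary remainders bounded by $\kappa\epsilon_n^2$ sidesteps the difference-quotient limit entirely and even yields a rate $O_p(1/\sqrt{n\gamma_n})$; the only caveat is that your displayed identity is written for $\epsilon_n>0$, and the mirror-image computation (same $O(\kappa\epsilon_n^2)$ remainder) should be recorded for $\epsilon_n<0$.

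One step needs tightening. For the middle piece $\sqrt{n\gamma_n}\int_{t+\bar w}^{t+\bar W_n(t)}H(t+\bar W_n(t)-u)\,\mathrm{d}\bar E_n(u)$, your assertion that the residual ``$\mathrm{d}\tilde E_n$-integral of a bounded integrand over an interval of length $|\epsilon_n|$'' is $o_p(1)$ is not valid as stated: boundedness of the integrand is not enough, because the total variation of $\tilde E_n$ over that shrinking interval does \emph{not} vanish---its jump part alone is of order $\rho\mu|\tilde W_n(t)|=O_p(1)$ (consider integrating the indicator of arrival epochs). The correct justification, for which you already have every ingredient, is that on the integration interval the integrand is bounded by $H(|\epsilon_n|)\le\kappa|\epsilon_n|=O_p(1/\sqrt{n\gamma_n})$, while the total variation of $\tilde E_n$ there is $O_p(1)$ (the increment of $\tilde E_n$ plus twice the compensator contribution $\sqrt{n\gamma_n}\rho\mu|\epsilon_n|=\rho\mu|\tilde W_n(t)|$), so the product is $o_p(1)$. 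Alternatively, skip the compensator split for this piece and argue as the paper does: the entire term is bounded by $\kappa|\tilde W_n(t)|\bigl|\bar E_n(t+\bar W_n(t))-\bar E_n(t+\bar w)\bigr|$, which is $O_p(1)\cdot o_p(1)$ by Theorem~\ref{theorem:many-server-virtual}, \eqref{eq:fluid-E}, and \eqref{eq:Wn-bar-limit}.
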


Using the previous two lemmas, we are ready to prove
Theorem~\ref{theorem:many-server}.
\begin{proof}
	[Proof of Theorem~\ref{theorem:many-server}]
Put $ \tilde{G}_{n}(t) = \tilde{G}_{n}^{\prime
}(t)+\tilde{G}_{n}^{\prime\prime}(t)+\tilde{Y}_{n}(t)+\tilde{Y}_{n}^{\prime
}(t)$. By Lemmas~\ref{lemma:WGG} and~\ref{lemma:YY},
\[
(\tilde{W}_{n}(t),\tilde{G}_{n}(t)) \Rightarrow (\hat{W}(t), \hat{G}(t))
\quad\mbox{as }n\rightarrow \infty,
\]
from which the convergence of $ \tilde{X}_{n}(t+\bar{W}_{n}(t)) $ follows.
\end{proof}

\section{Conclusion and future work}
\label{sec:conclusion}

A diffusion model was proposed for $ \mbox{GI}/\mbox{GI}/n+\mbox{GI} $ queues in
the ED regime. We adopted a space-time scaling approach, in which the number of
servers and the mean patience time are used as the respective scaling factors in
space and time, to obtaining a one-dimensional diffusion approximation for the
virtual waiting time process. Using this diffusion model, we derived the
steady-state distributions of virtual waiting time and queue length, along with
approximate formulas for other performance measures. These approximations are
generally accurate when the mean patience time is several times longer than the
mean service time and the patience time distribution does not change rapidly
around the mean virtual waiting time.

One limitation of the diffusion model is as follows. As we discussed in
Section~\ref{sec:Limits}, the diffusion limit given by \eqref{eq:OU-limit}
depends on the normalized patience time distribution only through the hazard
rate at $ \bar{w} $. In consequence, all approximate formulas derived from the
diffusion model are dictated by the patience time hazard rate (or equivalently,
the patience time probability density) at the mean virtual waiting time. These
approximations will generally be accurate, if the patience time hazard rate does
not change much around the mean virtual waiting time (as the case in
Figure~\ref{fig:H2-patience}). In call centers, this assumption would be valid
if waiting customers do not receive real-time information about the queue.
However, if delay announcements are made in the call center, customers may
decide whether to hang up according to what they hear about the queue. As a
result, the patience time hazard rate may change rapidly after an announcement
time. In this case, performance approximations that depend on the patience time
distribution only through ``a single point'' may no longer produce satisfactory
results. We would thus need a diffusion model that could incorporate the
patience time hazard rate on a neighborhood of the mean virtual waiting time.
Such models were derived for $ \mbox{GI}/\mbox{M}/n+\mbox{GI} $ queues by
\citet{ReedTezcan12} in the critically loaded regime and by \citet{HuangETAL14}
in the ED regime. Furthermore, Huang et al. (2014) applied the results to the
analysis of systems with delay announcement. In the future, we would also extend
our diffusion model by including the entire patience time distribution. To this
end, the current asymptotic framework needs to be modified so that both the
space-time scaling and the hazard rate scaling used by \citet{ReedTezcan12} and
\citet{HuangETAL14} can be combined into the same asymptotic framework.

\appendix

\setcounter{section}{1}
\setcounter{equation}{0}

\section*{Appendix}

We prove Theorem~\ref{theorem:FCLT} is in Section~\ref{sec:Proof-FCLT} and prove all technical lemmas in Sections~\ref{sec:lemmas-Theorem-1} and~\ref{sec:lemmas-Theorem-2}.

\subsection{Proof of the FCLT}
\label{sec:Proof-FCLT}

Let $ S_{j,k} $ be the $k$th partial sum of
$\{\xi_{j,\ell}:\ell\in\mathbb{N}\}$, i.e.,
\[
S_{j,k}=\sum_{\ell=1}^{k}\xi_{j,\ell}\quad\mbox{for }j\in\mathbb{N}.
\]
We take $S_{j,0}=0$ by convention. Let us first present the proof of
Proposition~\ref{prop:FSLLN}.

\begin{proof}
	[Proof of Proposition~\ref{prop:FSLLN}]
Because $S_{j,N_{j}(t)}\leq t\leq S_{j,N_{j}(t)+1}$ for $t>0$, then
\[
\frac{\sum_{j=1}^{n}S_{j,N_{j}(\gamma_{n}t)}}{\sum_{j=1}^{n}N_{j}(\gamma_{n}t)}\leq\frac{n
\gamma_{n}t}{\sum_{j=1}^{n}N_{j}(\gamma_{n}t)}\leq\frac{\sum_{j=1}^{n}S_{j,N_{j}(\gamma_{n}t)+1}}
{\sum_{j=1}^{n}N_{j}(\gamma_{n}t)}
\]
provided that $\sum_{j=1}^{n}N_{j}(\gamma_{n}t)>0$. Note that
\[
\sum_{j=1}^{n}S_{j,N_{j}(\gamma_{n}t)+1}=\sum_{j=1}^{n}\xi_{j,1}+\sum_{j=1}^{n}\sum_{k=2}^{N_{j}
(\gamma_{n}t)+1}\xi_{j,k}.
\]
Because $\lim_{n\rightarrow\infty} N_{j}(\gamma_{n}t)=\infty$ almost surely for
$t>0$, then
\[
\lim_{n\rightarrow\infty}
\frac{\sum_{j=1}^{n}\sum_{k=2}^{N_{j}(\gamma_{n}t)+1}\xi_{j,k}}{\sum_{j=1}^{n}N_{j}(\gamma_{n}t)} = \frac{1}{\mu}
\]
almost surely by the strong law of large numbers. In addition, $\lim_{n
\rightarrow \infty} \sum_{j=1}^{n}N_{j} (\gamma_{n}t)/n =\infty$ almost surely
for $t>0$, which implies that
\[
\lim_{n\rightarrow\infty}\frac{\sum_{j=1}^{n}\xi_{j,1}}{\sum_{j=1}^{n}N_{j}(\gamma_{n}t)}=0.
\]
Therefore,
\[
\lim_{n\rightarrow\infty}
\frac{\sum_{j=1}^{n}S_{j,N_{j}(\gamma_{n}t)+1}}{\sum_{j=1}^{n}N_{j}(\gamma_{n}t)}
=\frac{1}{\mu}.
\]
Also,
\[
\lim_{n\rightarrow\infty}
\frac{\sum_{j=1}^{n}S_{j,N_{j}(\gamma_{n}t)}}{\sum_{j=1}^{n}N_{j}(\gamma_{n}t)}=\lim_{n\rightarrow\infty}\frac{\sum_{j=1}
^{n}S_{j,N_{j}(\gamma_{n}t)}}{\sum_{j=1}^{n}(N_{j}(\gamma_{n}t)-1)}\cdot\frac{\sum_{j=1}^{n}(N_{j}
(\gamma_{n}t)-1)}{\sum_{j=1}^{n}N_{j}(\gamma_{n}t)}=\frac{1}{\mu}.
\]
Then, $\lim_{n\rightarrow\infty}\bar{B}_{n}(t)=\mu t$ almost surely for
$t\geq0$. Because $\bar{B}_{n}(t)$ is nondecreasing in $t$ and $e$ is a
continuous function, the assertion follows from Theorem VI.2.15 in
\citet{JacodShiryaev2002}.
\end{proof}

\begin{lemma}
\label{lemma:L-tilde}
Let
\begin{equation}
\tilde{B}^{\prime}_{n}(t)=\frac{1}{\sqrt{n\gamma_{n}}}\sum_{j=1}^{n}\sum_{k=2}^{N_{j}(\gamma_{n}t)+1}(1-\mu
\xi_{j,k})\quad\mbox{for }t\geq0.\label{eq:B_check}
\end{equation}
Under the conditions of Theorem~\ref{theorem:FCLT},
\[
\tilde{B}^{\prime}_{n}\Rightarrow\hat{B}\quad\mbox{as }n\rightarrow\infty\mbox{.}
\]
\end{lemma}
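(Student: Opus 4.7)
The plan is to prove $\tilde{B}^{\prime}_n \Rightarrow \hat{B}$ by combining convergence of finite-dimensional distributions with tightness in $\mathbb{D}$, leveraging a clean martingale structure carried by $\tilde{B}^{\prime}_n$. For each $j$, the partial sums $Z_j(k)=\sum_{\ell=2}^{k+1}(1-\mu\xi_{j,\ell})$ form a mean-zero discrete martingale with iid centered increments of variance $c_s^2$, adapted to $\mathcal{F}_k^j=\sigma(\xi_{j,1},\ldots,\xi_{j,k+1})$, and $N_j(\gamma_n t)$ is an $\mathcal{F}^j$-stopping time since $\{N_j(\gamma_n t)\le k\}=\{S_{j,k+1}>\gamma_n t\}\in\mathcal{F}_{k+1}^j$. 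A Wald-type computation, carried out conditionally on the residual life at $\gamma_n s$, further shows that $t\mapsto Z_j(N_j(\gamma_n t))$ is itself a martingale with respect to $\mathcal{H}_t^j=\mathcal{F}_{N_j(\gamma_n t)+1}^j$; by independence across $j$, $\tilde{B}^{\prime}_n$ is an $\mathcal{H}_t$-martingale with $\mathcal{H}_t=\bigvee_j\mathcal{H}_t^j$ and predictable quadratic variation $\langle\tilde{B}^{\prime}_n\rangle_t=c_s^2 B_n(\gamma_n t)/(n\gamma_n)$.

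For finite-dimensional convergence I would apply the Lindeberg--Feller CLT through the Cram\'er--Wold device. Optional sampling yields $E[Z_j(N_j(\gamma_n t))]=0$, and Wald's second identity gives $E[Z_j(N_j(\gamma_n t))^2]=c_s^2\mu\gamma_n t$, so $\mathrm{Var}(\tilde{B}^{\prime}_n(t))=c_s^2\mu t$; orthogonality of martingale increments across the nested stopping times $N_j(\gamma_n s)\le N_j(\gamma_n t)$ delivers the Brownian covariance $\mathrm{Cov}(\tilde{B}^{\prime}_n(s),\tilde{B}^{\prime}_n(t))=c_s^2\mu\min(s,t)$. The Lyapunov condition with $\delta=1$ reduces to a bound $E|Z_j(N_j(\gamma_n t))|^3=O(\gamma_n^{3/2})$, which I obtain by combining the Burkholder--Davis--Gundy inequality for $Z_j$ at the stopping time $N_j(\gamma_n t)$ with the Wald-type estimate $E\bigl[\sum_{k=2}^{N_1(\gamma_n t)+1}|1-\mu\xi_{1,k}|^3\bigr]=O(\gamma_n)$ using the finite-third-moment assumption~\eqref{eq:m3}. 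The resulting Lyapunov ratio is of order $n^{-1/2}\to 0$.

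For tightness I plan to invoke Aldous' criterion rather than pursue a fourth-moment bound, since the latter is not available under \eqref{eq:m3} alone. For any $\mathcal{H}_t$-stopping time $\tau_n\le T$ and any $\delta>0$, the martingale property gives
\[
E\bigl[(\tilde{B}^{\prime}_n(\tau_n+\delta)-\tilde{B}^{\prime}_n(\tau_n))^2\bigr] = E\bigl[\langle\tilde{B}^{\prime}_n\rangle_{\tau_n+\delta}-\langle\tilde{B}^{\prime}_n\rangle_{\tau_n}\bigr] = \frac{c_s^2}{n\gamma_n}E\bigl[B_n(\gamma_n(\tau_n+\delta))-B_n(\gamma_n\tau_n)\bigr].
\]
Conditioning on the $\mathcal{H}_{\tau_n}^j$-measurable residual lives $R_j$ and applying the classical bound $U(u)\le\mu u+C$ on the ordinary renewal function (valid under \eqref{eq:conditionF} and \eqref{eq:m3}) yields $E[N_j(\gamma_n(\tau_n+\delta))-N_j(\gamma_n\tau_n)\mid R_j]\le 1+U(\gamma_n\delta)\le \mu\gamma_n\delta+O(1)$ uniformly in $\tau_n$ and $R_j$; summing over $j$ and dividing by $n\gamma_n$, the right-hand side above is at most $c_s^2\mu\delta+O(\gamma_n^{-1})$, and Chebyshev's inequality delivers Aldous' condition.

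The main obstacle will be the rigorous verification that $t\mapsto Z_j(N_j(\gamma_n t))$ is a martingale with respect to $\mathcal{H}^j$, since this underpins both the second-moment identity driving the CLT and the predictable-quadratic-variation formula driving the tightness argument. The key step is a Wald identity applied conditionally on the residual life: on the event $\{R_j\le\gamma_n(t-s)\}$ the increment $Z_j(N_j(\gamma_n t))-Z_j(N_j(\gamma_n s))$ equals a randomly-stopped sum of fresh iid centered terms with stopping time the hitting time of level $\gamma_n(t-s)-R_j$ by a fresh ordinary renewal process, so the classical identity $E[\tilde{N}(u)+1-\mu\tilde{S}_{\tilde{N}(u)+1}]=0$ supplies the required mean-zero increment; on the complementary event the increment is identically zero.
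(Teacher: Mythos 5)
Your proposal is architecturally sound and takes a genuinely different route from the paper. The paper's proof introduces an auxiliary iid sequence $\{\xi^{\prime}_{k}\}$ with distribution $F$, applies Donsker's theorem to the partial-sum process $\tilde{B}^{\prime\prime}_{n}$, composes with the fluid-scaled superposition $\bar{B}_{n}$ via Proposition~\ref{prop:FSLLN} and the random time-change theorem, and then asserts that $\tilde{B}^{\prime}_{n}$ has the same law as $\tilde{B}^{\prime\prime}_{n}\circ\bar{B}_{n}$. You instead work with the martingale structure of $\tilde{B}^{\prime}_{n}$ itself: finite-dimensional convergence from Wald identities and the Lindeberg--Feller CLT, and tightness from Aldous' criterion with Lorden's bound $U(u)\leq\mu u+C$ on the renewal function. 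Your route is longer, but it buys robustness: the paper's distributional identification is delicate, because in $\tilde{B}^{\prime}_{n}$ the jump occurring at a renewal epoch of the $j$th component equals $(1-\mu\xi_{j,k+1})/\sqrt{n\gamma_{n}}$ and hence determines exactly when that component's next renewal occurs, whereas in $\tilde{B}^{\prime\prime}_{n}\circ\bar{B}_{n}$ the jump sizes are independent of the jump epochs; your argument needs no such identification. Your Wald-type verification of the martingale property of $t\mapsto Z_{j}(N_{j}(\gamma_{n}t))$ (zero increment when the residual life exceeds the elapsed time, and the identity $E[\tilde{N}(u)+1-\mu\tilde{S}_{\tilde{N}(u)+1}]=0$ otherwise) is exactly the right argument, and the Aldous computation conditioning on residual lives is sound.

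Two repairs are needed before the proof is complete. First, the filtration is off by one: with your convention $\mathcal{F}^{j}_{k}=\sigma(\xi_{j,1},\ldots,\xi_{j,k+1})$, the correct choice is the stopped $\sigma$-field $\mathcal{F}^{j}_{N_{j}(\gamma_{n}t)}$, which already contains the interrenewal time in progress. Your $\mathcal{H}^{j}_{t}=\mathcal{F}^{j}_{N_{j}(\gamma_{n}t)+1}$ additionally reveals $\xi_{j,N_{j}(\gamma_{n}t)+2}$, i.e., the size of the \emph{next} jump; on the ($\mathcal{H}^{j}_{s}$-measurable) event that exactly one renewal of component $j$ falls in $(\gamma_{n}s,\gamma_{n}t]$, the increment equals the known, generally nonzero quantity $1-\mu\xi_{j,N_{j}(\gamma_{n}s)+2}$, so the martingale property fails for that filtration. (Similarly, $\{N_{j}(\gamma_{n}t)\leq k\}=\{S_{j,k+1}>\gamma_{n}t\}$ lies in $\mathcal{F}^{j}_{k}$, which is what the stopping-time property requires; membership in $\mathcal{F}^{j}_{k+1}$ is not enough.) Second, the Lyapunov bound does not follow from BDG plus the Wald third-moment estimate as stated: with $X_{m}=1-\mu\xi_{j,m+1}$, BDG controls $E|Z_{j}(N_{j}(u))|^{3}$ by $E\big[\big(\sum_{m\leq N_{j}(u)}X_{m}^{2}\big)^{3/2}\big]$, and this quantity \emph{dominates} $E\big[\sum_{m\leq N_{j}(u)}|X_{m}|^{3}\big]$ rather than being controlled by it. Use instead the Rosenthal--Burkholder inequality with the predictable quadratic variation,
\[
E|Z_{j}(N_{j}(u))|^{3}\leq C\Big(c_{s}^{3}\,E[N_{j}(u)^{3/2}]+E\Big[\sum_{m\leq N_{j}(u)}|X_{m}|^{3}\Big]\Big),
\]
together with $E[N_{j}(u)^{3/2}]=O(u^{3/2})$ (from $E[N_{j}(u)^{2}]=O(u^{2})$ and Jensen's inequality) and your Wald estimate, which holds under \eqref{eq:m3}; this gives $E|Z_{j}(N_{j}(\gamma_{n}t))|^{3}=O(\gamma_{n}^{3/2})$ and the Lyapunov ratio $O(n^{-1/2})$ as you claim. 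With these two fixes, your argument goes through.
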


\begin{proof}
Let $\{\xi^{\prime}_{k}:k\in\mathbb{N}\}$ be a sequence of iid random variables
following distribution $F$. Put
\[
\tilde{B}''_{n}(t)=\frac{1}{\sqrt{n\gamma_{n}}}\sum_{k=1}^{\lfloor
n\gamma_{n}t\rfloor}(1-\mu
\xi^{\prime}_{k})\quad\mbox{for }t\geq0.
\]
Because $\mu \xi^{\prime}_{k}$ has mean one and variance
$c_{s}^{2}$, by Donsker's theorem, $\tilde{B}''_{n}\Rightarrow\hat{B}''$ as
$n\rightarrow\infty$, where $\hat{B}''$ is a driftless Brownian motion with
$\hat{B}''(0)=0$ and variance $c_{s}^{2}$. By \eqref{eq:bar-VR},
\[
\tilde{B}''_{n}(\bar{B}_{n}(t))=\frac{1}{\sqrt{n\gamma_{n}}}
\sum_{k=1}^{n\gamma_{n}\bar{B}_{n}(t)}
(1-\mu\xi^{\prime}_{k})=\frac{1}{\sqrt{n\gamma_{n}}}
\sum_{k=1}^{N_{1}(\gamma_{n}t)+\cdots+N_{n}
(\gamma_{n}t)}(1-\mu \xi^{\prime}_{k}).
\]
Then, it follows from Proposition~\ref{prop:FSLLN} and the random
time-change theorem (see Theorem~5.3 in \citet{ChenYao01}) that $\tilde{B}''_{n} \circ
\bar{B}_{n}\Rightarrow\mu^{1/2}\hat{B}''$ as $n\rightarrow\infty$. Because
$\tilde{B}^{\prime}_{n}$ has the same distribution as
$\tilde{B}''_{n}\circ\bar{B}_{n}$ and $\mu^{1/2}\hat{B}''$ has the same
distribution as $\hat{B}$, the lemma follows.
\end{proof}

The next lemma is a technical result for proving the convergence of $
\tilde{B}_{n} $.
\begin{lemma}
\label{lemma:tightness}
Under the conditions of Theorem~\ref{theorem:FCLT},
for all $0\leq r\leq s\leq t$ and all $n\in\mathbb{N}$, there exists
$0<c<\infty$
such that
\[
\mathbb{E}[(\tilde{B}_{n}(s)-\tilde{B}_{n}(r))^{2}(\tilde{B}_{n}(t)-\tilde{B}_{n}(s))^{2}]\leq
c(t-r)^{2}.
\]
\end{lemma}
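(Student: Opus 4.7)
The plan is to exploit the mutual independence of the renewal processes $N_1, \ldots, N_n$ across $j$ to reduce the fourth-moment bound on the superposition to moment estimates on a single stationary renewal process. First I would set
\[
Y_j = N_j(\gamma_n s) - N_j(\gamma_n r) - \mu\gamma_n(s-r), \qquad Z_j = N_j(\gamma_n t) - N_j(\gamma_n s) - \mu\gamma_n(t-s),
\]
so that $\tilde{B}_n(s) - \tilde{B}_n(r) = (n\gamma_n)^{-1/2}\sum_{j=1}^n Y_j$ and $\tilde{B}_n(t) - \tilde{B}_n(s) = (n\gamma_n)^{-1/2}\sum_{j=1}^n Z_j$. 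Because each $N_j$ is a \emph{stationary} renewal process, its mean function is exactly linear, so $\mathbb{E}[Y_j] = \mathbb{E}[Z_j] = 0$, and the pairs $\{(Y_j,Z_j)\}_{j=1}^n$ are iid across $j$, although $Y_j$ and $Z_j$ remain correlated for the same $j$.

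Next I would expand $(\sum_j Y_j)^2(\sum_k Z_k)^2 = \sum_{j_1,j_2,k_1,k_2} Y_{j_1}Y_{j_2}Z_{k_1}Z_{k_2}$ and take expectations. Any term in which some index appears exactly once factors out a zero mean and vanishes; the only surviving contributions come from the pairings $\{j_1{=}j_2,k_1{=}k_2,j_1{\ne}k_1\}$, $\{j_1{=}k_1,j_2{=}k_2,j_1{\ne}j_2\}$, $\{j_1{=}k_2,j_2{=}k_1,j_1{\ne}j_2\}$, and the diagonal $\{j_1{=}j_2{=}k_1{=}k_2\}$, producing
\[
\mathbb{E}\Bigl[\bigl(\textstyle\sum_{j=1}^n Y_j\bigr)^{2}\bigl(\textstyle\sum_{k=1}^n Z_k\bigr)^{2}\Bigr] = n\,\mathbb{E}[Y_1^2 Z_1^2] + n(n-1)\,\mathbb{E}[Y_1^2]\,\mathbb{E}[Z_1^2] + 2n(n-1)\,(\mathbb{E}[Y_1 Z_1])^2.
\]

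Third, I would bound each of the three pieces by renewal theory. By stationarity of increments, $Y_1 \stackrel{d}{=} N_1(\gamma_n(s-r)) - \mu\gamma_n(s-r)$ and $Z_1 \stackrel{d}{=} N_1(\gamma_n(t-s)) - \mu\gamma_n(t-s)$. The classical second-moment estimate for a stationary renewal process with finite variance of $\xi$ gives $\mathbb{E}[Y_1^2] \leq C\gamma_n(s-r)$ and $\mathbb{E}[Z_1^2] \leq C\gamma_n(t-s)$, and Cauchy--Schwarz then yields $(\mathbb{E}[Y_1 Z_1])^2 \leq C^2\gamma_n^2(s-r)(t-s)$. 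For the diagonal piece I would use the finite-third-moment assumption \eqref{eq:m3} together with a Wald-type identity (or a Skorokhod embedding of the renewal sequence into Brownian motion) to extract a fourth-moment estimate $\mathbb{E}[(N_1(u)-\mu u)^4] \leq C(u^2+u)$, from which $\mathbb{E}[Y_1^2 Z_1^2] \leq C\gamma_n^2(s-r)(t-s) + (\text{lower-order terms})$ follows after conditioning on the renewal-process state at time $\gamma_n s$ to approximately decouple the two adjacent intervals.

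Assembling the three contributions and dividing by $n^2\gamma_n^2$, the $O(n^2)$ pair-type terms each produce the clean bound $C(s-r)(t-s) \leq \tfrac{C}{4}(t-r)^2$, while the diagonal $O(n)$ term contributes only $O((t-r)^2/n)$; the lemma then follows. The main obstacle is the fourth-moment bound on $\mathbb{E}[Y_1^2 Z_1^2]$: it mixes two adjacent, disjoint intervals whose increments are dependent in a non-Markovian way, and it must be proved with only the third moment of $F$ available. Conditioning on the age at time $\gamma_n s$---which has the equilibrium density $F_e$ that is bounded thanks to \eqref{eq:conditionF}---is the natural decoupling device, after which standard renewal moment bounds apply separately on each side of $\gamma_n s$.
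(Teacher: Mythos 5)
Your first two steps and your final assembly are exactly the paper's proof: the same four-index expansion using independence and zero means (the paper writes it with $\check{N}_j(u)=N_j(u)-\mu u$), the same second-moment bound $\mathbb{E}[Y_1^2]\le C\gamma_n(s-r)$ for the $n(n-1)$ product terms, and the same Cauchy--Schwarz (H\"older) treatment of the covariance term. But the paper does not prove the diagonal bound $\mathbb{E}[Y_1^2Z_1^2]\le C\gamma_n^2(t-r)^2$ at all: it quotes it as inequality (8) of \citet{Whitt85} for stationary renewal processes, explicitly noting that the regularity condition \eqref{eq:conditionF} is what makes that inequality hold. Your proposal replaces this citation by a from-scratch sketch, and that sketch has a genuine gap.

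The gap is that the diagonal bound must be \emph{purely quadratic} in $t-r$, with no additive linear term, and must hold for all $0\le r\le s\le t$, including increments much shorter than $\gamma_n^{-1}$ (the lemma quantifies over all such triples because Billingsley's criterion (13.14) is applied to them all, and $\gamma_n\to\infty$). Neither ingredient of your sketch survives in this regime. First, a pure fourth-moment estimate $\mathbb{E}[(N_1(u)-\mu u)^4]\le C(u^2+u)$ is useless here precisely because of its linear term: for adjacent intervals of length $\gamma_n(s-r)=\gamma_n(t-s)=h$ with $h$ small, you must show $\mathbb{E}[Y_1^2Z_1^2]=O(h^2)$, not $O(h)$. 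Second, conditioning at time $\gamma_n s$ cannot be carried out with bounds uniform in the residual time $R$: on the event that $R$ is small, the second interval contains a renewal almost surely, so $Z_1\approx 1-\mu\gamma_n(t-s)$ is of order one no matter how short that interval is; hence $\sup_R\mathbb{E}[Z_1^2\,|\,R]\ge c>0$, and the resulting bound $\mathbb{E}[Y_1^2Z_1^2]\le C\gamma_n(s-r)\big(\gamma_n(t-s)+1\big)$ contains the term $C\gamma_n(s-r)$, which is \emph{not} lower order --- it dominates $\gamma_n^2(t-r)^2$ whenever $t-r\ll\gamma_n^{-1}$. The quantity that actually has to be controlled is the probability of renewal points falling in \emph{both} adjacent intervals, i.e., a clustering estimate, and this is exactly where \eqref{eq:conditionF} enters. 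Your stated use of \eqref{eq:conditionF} (boundedness of the equilibrium density) is vacuous, since $f_e(u)=\mu(1-F(u))\le\mu$ for every $F$. By contrast, if \eqref{eq:conditionF} fails, say $F(u)\asymp\sqrt{u}$ near $0$, then the probability of a point in each of two adjacent intervals of length $h$ is of order $h\cdot\sqrt{h}$, whence $\mathbb{E}[Y_1^2Z_1^2]\gtrsim h^{3/2}\gg h^2$ and the lemma's conclusion is false. Any correct proof of the diagonal term must therefore use \eqref{eq:conditionF} through such a two-interval clustering bound; your sketch never does, and supplying it is precisely the content of Whitt's inequality (8) that the paper invokes.
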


\begin{proof}
Let $\check{N}_{j}(u)=N_{j}(u)-\mu u$ for $u\geq0$ and $j=1,\ldots,n$. Because
$N_{j}$ is a stationary renewal process, by inequalities (7) and (8) in
\citet{Whitt85}, there exists $c_{1}<\infty$ such that
\begin{equation}
\mathbb{E}[(\check{N}_{j}(s)-\check{N}_{j}(r))^{2}]\leq c_{1}(s-r)\label{eq:inequality1}
\end{equation}
and
\begin{equation}
\mathbb{E}[(\check{N}_{j}(s)-\check{N}_{j}(r))^{2}(\check{N}_{j}(t)-\check{N}_{j}(s))^{2}]\leq
c_{1}(t-r)^{2}\label{eq:inequality2}
\end{equation}
for all $0\leq r\leq s\leq t$. The regularity condition (\ref{eq:conditionF})
is required for inequality \eqref{eq:inequality2} to hold. In addition, it
follows
from \eqref{eq:inequality1} and H\"{o}lder's inequality that
\begin{equation}
\mathbb{E}[|\check{N}_{j}(s)-\check{N}_{j}(r)||\check{N}_{j}(t)-\check{N}_{j}(s)|]\leq c_{1}(s-
r)^{1/2}(t-s)^{1/2}\leq c_{1}(t-r).\label{eq:inequality3}
\end{equation}
Because $N_{1},\ldots,N_{n}$ are iid processes,
\begin{align*}
 \mathbb{E}[(\tilde{B}_{n}(s)-\tilde{B}_{n}(r))^{2}
 (\tilde{B}_{n}(t)-\tilde{B}_{n}(s))^{2}]
 & =\frac{1}{n\gamma_{n}^{2}}
 \mathbb{E}[(\check{N}_{1}(\gamma_{n}s)-\check{N}_{1}(\gamma_{n}
r))^{2}(\check{N}_{1}(\gamma_{n}t)-\check{N}_{1}(\gamma_{n}s))^{2}]\\
& \quad+\frac{n-1}{n\gamma_{n}^{2}}
\mathbb{E}[(\check{N}_{1}(\gamma_{n}s)-\check{N}_{1}
(\gamma_{n}r))^{2}]
\mathbb{E}[(\check{N}_{1}(\gamma_{n}t)-\check{N}_{1}(\gamma_{n}s))^{2}]\\
& \quad+\frac{2(n-1)}{n\gamma_{n}^{2}}
\mathbb{E}[(\check{N}_{1}(\gamma_{n}s)-\check{N}_{1}
(\gamma_{n}r))(\check{N}_{1}(\gamma_{n}t)-\check{N}_{1}(\gamma_{n}s))]^{2}\\
 & \leq c_{1}(t-r)^{2}+c_{1}^{2}(s-r)(t-s)+2c_{1}^{2}(t-r)^{2},
\end{align*}
in which the inequality is obtained by
\eqref{eq:inequality1}--\eqref{eq:inequality3}. The lemma follows with
$c=3c_{1}^{2}+c_{1}$.
\end{proof}

\begin{proof}[Proof of Theorem~\ref{theorem:FCLT}.]

For $j\in\mathbb{N}$, let
\begin{equation}
U_{j}(t)=S_{j,N_{j}(t)+1}-t\label{eq:recess}
\end{equation}
be the residual time at $t\geq0$. Note that $
U_{j}(0)=\xi_{j,1}$.
Because $N_{1},\ldots,N_{n}$ are iid stationary renewal processes,
$U_{1}(t),\ldots,U_{n}(t)$ are iid random variables following distribution
$F_{e}$ for all $t\geq0$, each having mean
\[
m_{e}=\int_{0}^{\infty}t\,\mathrm{d}F_{e}(t)=\frac{1+c_{s}^{2}}{2\mu}
\]
and variance
\[
\sigma_{e}^{2}=\int_{0}^{\infty}t^{2}\,\mathrm{d}F_{e}(t)-m_{e}^{2}=\frac{\mu}{3}\int_{0}^{\infty}
t^{3}\,\mathrm{d}F(t)-m_{e}^{2}.
\]
Then,  $\sigma_{e}^{2}<\infty$ by \eqref{eq:m3}. Put
\[
\tilde{U}_{n}(t)=\frac{1}{\sqrt{n\gamma_{n}}}
\sum_{j=1}^{n}(U_{j}(\gamma_{n}t)-m_{e}),
\]
and we have
$ \mathbb{E}[\tilde{U}_{n}(t)^{2}]=
\sigma_{e}^{2}/\gamma_{n}\rightarrow 0$ as $n
\rightarrow\infty$.
This implies that $\tilde{U}_{n}(t)\Rightarrow0$ as $n\rightarrow\infty$ for
$t\geq0$. By Theorem 3.9 in \citet{Billingsley99},
\begin{equation}
(\tilde{U}_{n}(t_{1}),\ldots,\tilde{U}_{n}(t_{\ell}))\Rightarrow0\quad\mbox{as }n\rightarrow\infty
\label{eq:R_tilde}
\end{equation}
for any $\ell\in\mathbb{N}$ and $0\leq t_{1}<\cdots<t_{\ell}$. By
\eqref{eq:recess},
\[
U_{j}(t)=\xi_{j,1}+\sum_{k=2}^{N_{j}(t)+1}\xi_{j,k}-t=U_{j}(0)+\sum_{k=2}^{N_{j}(t)+1}(\xi_{j,k}-
\mu^{-1})+\mu^{-1}N_{j}(t)-t.
\]
Then, by \eqref{eq:B_tilde} and \eqref{eq:B_check}, we obtain
\begin{equation}
\tilde{B}_{n}(t)=-\mu\tilde{U}_{n}(0)+\mu\tilde{U}_{n}(t)+\tilde{B}^{\prime}_{n}(t).
\label{eq:B_tilde_composition}
\end{equation}
We deduced from \eqref{eq:R_tilde}, \eqref{eq:B_tilde_composition}, and
Lemma~\ref{lemma:L-tilde} that
\[
(\tilde{B}_{n}(t_{1}),\ldots,\tilde{B}_{n}(t_{\ell}))\Rightarrow(\hat{B}(t_{1}),\ldots,\hat{B}
(t_{\ell}))\quad\mbox{as }n\rightarrow\infty\mbox{.}
\]
Finally, it follows from Lemma~\ref{lemma:tightness} and Theorem~13.5 in
\citet{Billingsley99} (with condition (13.13) replaced by (13.14)) that
$\tilde{B}_{n}\Rightarrow\hat{B}$ as $n\rightarrow\infty$.
\end{proof}

\subsection{Proofs of lemmas for Theorem~\ref{theorem:many-server-virtual}}
\label{sec:lemmas-Theorem-1}


\begin{proof}
[Proof of Lemma~\ref{lemma:TV}]
By \eqref{eq:fluid-BV}, $\bar{B}_{n}(t+\bar{V}_{n}(t))\leq\bar{B}_{n}(\bar
{V}_{n}(0))+\bar{E}_{n}(T)$ for $0\leq t\leq T$. Because $\bar{B}_{n}$ have
nondecreasing sample paths,
\[
\mathbb{P}\Big[\sup_{0\leq t\leq T}(t+\bar{V}_{n}(t))>a\Big]\leq
\mathbb{P}[\bar{B}_{n}(a)\leq\bar{B}_{n}(\bar{V}_{n}(0))+\bar{E}_{n}%
(T)]\quad\mbox{for }a>0.
\]
By \eqref{eq:initial}, \eqref{eq:VW}, and Proposition~\ref{prop:FSLLN},
$\bar{B}_{n}(\bar{V}_{n}(0)) \Rightarrow\mu \bar{w}$ as $ n\rightarrow\infty $.
Then, \eqref{eq:V-bar-bounded} follows from \eqref{eq:fluid-E}.

Because $e+\bar{V}_{n}$ has nondecreasing sample paths (see Lemma~3.3 in
\citet{DaiHe10}), we have $
(\bar{V}_{n}(t_{2})-\bar{V}_{n}(t_{1}))^{+}\leq(t_{2}+\bar{V}_{n}
(t_{2}))-(t_{1}+\bar{V}_{n}(t_{1}))$ and $(\bar{V}_{n}
(t_{2})-\bar{V}_{n}(t_{1}))^{-}\leq t_{2}-t_{1}$ for $0\leq t_{1}\leq t_{2}$. Hence, the total variation of $\bar
{V}_{n}$ over $[0,T]$ satisfies%
\[
\int_{0}^{T}|\mathrm{d}\bar{V}_{n}(t)|\leq 2T+\bar{V}_{n}(T)-\bar{V}_{n}(0).
\]
Then, \eqref{eq:V-total} follows from \eqref{eq:V-bar-bounded}.
\end{proof}

Martingale arguments are extensively involved in subsequent proofs. Let us
define the associated filtrations. In the $n$th system, let $v_{n,k}$ be the
service time of the $k$th customer arriving after time zero. Note that $v_{n,k}$
is not identical to $\xi_{n,k}$, since the latter is the $ k $th service time
finished by the $ n $th server. To keep track of the history of the queue, we
define a filtration $\{\mathcal{F}_{n,i}:i=0,1,\ldots\}$ by
\[
\mathcal{F}_{n,i}=\sigma\{a_{n,k+1},v_{n,k},\zeta_{n,k}:k\leq i\},
\]
where $ a_{n,k} $ and $ \zeta_{n,k} $ are the arrival and patience times of the
$ k $th customer. By Lemma~3.1 in \citet{DaiHe10}, $V_{n}(a_{n,k}-)$ is
$\mathcal{F}_{n,i} $-measurable for all $k\leq i+1$. Modifying the above
filtration, we construct a continuous-time filtration
$\{\mathcal{F}_{n}(t):t\geq0\}$ by
\[
\mathcal{F}_{n}(t)=\mathcal{F}_{n,\lfloor n\gamma_{n}t\rfloor}.
\]

\begin{proof}
	[Proof of Lemma~\ref{lemma:fluid-R}]
Put
\[
\bar{Z}_{n}(t)=\frac{1}{n\gamma_{n}}\sum_{k=1}^{\lfloor n\gamma_{n}t\rfloor
}\big(1_{\{\zeta_{n,k}\leq V_{n}(a_{n,k}-)\}}-\Theta_{n}(V_{n}(a_{n,k}-))\big).
\]
Then, $\{(\bar{Z}_{n}(t),\mathcal{F}_{n}(t)):t\geq0\}$ is a martingale with quadratic variation
\[
[\bar{Z}_{n}](t)=\frac{1}{n^{2}\gamma_{n}^{2}}\sum_{k=1}^{\lfloor
n\gamma_{n}t\rfloor}\big(1_{\{\zeta_{n,k}\leq V_{n}(a_{n,k}-)\}}-\Theta_{n}%
(V_{n}(a_{n,k}-))\big)^{2}
\]
(see Lemma~4.2 in \citet{DaiHe10}). Clearly, $\lim_{n\rightarrow\infty}[\bar{Z}_{n}](t)=0$ for $t>0$.
We also have
\[
\lim_{n\rightarrow\infty}\sup_{0\leq t\leq T}|\Delta\bar{Z}_{n}(t)|=
0
\]
because $|\Delta\bar{Z}_{n}(t)|\leq1/(n\gamma_{n})$. Then,
it follows from the martingale FCLT (see Theorem~7.1.4 in \citet{EthierKurtz86}) that
$\bar{Z}_{n}\Rightarrow0$ as $n\rightarrow\infty$. By \eqref{eq:fluid-E} and
the random time-change theorem,
\begin{equation}
\bar{Z}_{n}\circ\bar{E}_{n}\Rightarrow0\quad\mbox{as }n\rightarrow\infty.
\label{eq:ZE}
\end{equation}

Put
\[
\Gamma_{n}(t)=\sum_{k=1}^{E_{n}(t)}\Theta_{n}(V_{n}(a_{n,k}-))\quad\mbox{and}\quad\bar{\Gamma}_{n}(t)=\frac{1}{n\gamma_{n}}\Gamma_{n}(\gamma_{n}t).
\]
By \eqref{eq:Theta-n} and \eqref{eq:bar-VR}, $\bar{\Gamma}_{n}(t)$ can be written into a
Riemann--Stieltjes integral
\[
\bar{\Gamma}_{n}(t)=\int_{0}^{t}H(\bar{V}_{n}(u-))\,\mathrm{d}\bar{E}_{n}(u).
\]
Using integration by parts, we obtain
\[
\bar{\Gamma}_{n}(t)-\rho\mu\int_{0}^{t}H(\bar{V}_{n}(u))\,\mathrm{d}u=(\bar{E}%
_{n}(t)-\rho\mu t)H(\bar{V}_{n}(t))-\int_{0}^{t}(\bar{E}_{n}(u)-\rho\mu
u)\,\mathrm{d}H(\bar{V}_{n}(u)).
\]
By \eqref{eq:fluid-E}, the first term on the right side satisfies%
\[
\sup_{0\leq t\leq T}\left\vert (\bar{E}_{n}(t)-\rho\mu t)H(\bar{V}%
_{n}(t))\right\vert \Rightarrow0\quad\mbox{as }n\rightarrow\infty.
\]
For the second term, it follows from \eqref{eq:bounded-density} that
\begin{align*}
\sup_{0\leq t\leq T}\Big\vert \int_{0}^{t}(\bar{E}_{n}(u)-\rho\mu
u)\,\mathrm{d}H(\bar{V}_{n}(u))\Big\vert  &  \leq\sup_{0\leq t\leq
T}\vert \bar{E}_{n}(t)-\rho\mu t\vert \int_{0}^{T}|\mathrm{d}%
H(\bar{V}_{n}(t))|\\
&  \leq\kappa\sup_{0\leq t\leq T}\vert \bar{E}_{n}(t)-\rho\mu
t\vert \int_{0}^{T}|\mathrm{d}\bar{V}_{n}(t)|.
\end{align*}
By \eqref{eq:fluid-E} and \eqref{eq:V-total},
\[
\sup_{0\leq t\leq T}\Big\vert \int_{0}^{t}(\bar{E}_{n}(u)-\rho\mu
u)\,\mathrm{d}H(\bar{V}_{n}(u))\Big\vert \Rightarrow0\quad
\mbox{as }n\rightarrow\infty.
\]
It follows that
\begin{equation}
\sup_{0\leq t\leq T}\Big\vert\bar{\Gamma}_{n}(t)-\rho\mu\int_{0}^{t}H(\bar{V}%
_{n}(u))\,\mathrm{d}u\Big\vert\Rightarrow0\quad\mbox{as }n\rightarrow
\infty.\label{eq:bar-S}%
\end{equation}
Because $\bar{R}_{n}(t)=\bar{Z}_{n}(\bar{E}_{n}(t))+\bar{\Gamma}_{n}(t)$, the
lemma follows from \eqref{eq:ZE} and \eqref{eq:bar-S}.
\end{proof}

\begin{proof}
	[Proof of Lemma~\ref{lemma:EBR}]
Put
\[
\tilde{Z}_{n}(t)=\frac{1}{\sqrt{n\gamma_{n}}}\sum_{k=1}^{\lfloor n\gamma
	_{n}t\rfloor}\big(1_{\{\bar{\zeta}_{n,k}\leq\bar{V}_{n}(\bar{a}_{n,k}-)\}}
-H(\bar{V}_{n}(\bar{a}_{n,k}-))\big)
\]
and
\[
\tilde{Z}^{\prime}_{n}(t)=\frac{1}{\sqrt{n\gamma_{n}}}\sum_{k=1}^{\lfloor n\gamma_{n}t\rfloor}\big(1_{\{\bar{\zeta}_{n,k}\leq \bar{w}\}}-H(\bar{w})\big).
\]
Clearly, $\mathbb{P}[\bar{\zeta}_{n,k}\leq
\bar{w}]=H(\bar{w})=(\rho-1)/\rho$ for $n,k\in\mathbb{N}$. Then,
$\tilde{Z}^{\prime}_{n}\Rightarrow\hat{Z}$ as $n\rightarrow\infty$ by Donsker's
theorem, where $\hat{Z}$ is a driftless Brownian motion with variance
$(\rho-1)/\rho^{2}$. Because $\tilde{E}_{n}$, $\tilde{B}_{n}$, and
$\tilde{Z}^{\prime}_{n}$ are mutually independent, it follows from \eqref{eq:E}
and Theorem~\ref{theorem:FCLT} that
\begin{equation}
(\tilde{E}_{n},\tilde{B}_{n},\tilde{Z}^{\prime}_{n})\Rightarrow(\hat{E},\hat
{B},\hat{Z})\quad\mbox{as }n\rightarrow\infty,
\label{eq:EBZ}
\end{equation}
where $\hat{E}$, $\hat{B}$, and $\hat{Z}$ are also mutually independent.

By Lemma~4.2 in \citet{DaiHe10}, $\{(\tilde{Z}_{n}(t)-\tilde{Z}^{\prime}_{n}
(t),\mathcal{F}_{n}(t)):t\geq0\}$ is a martingale with
\[
[\tilde{Z}_{n}-\tilde{Z}^{\prime}_{n}](t)=\frac{1}{n\gamma_{n}}\sum_{k=1}^{\lfloor n\gamma_{n}t\rfloor}\big(1_{\{\bar{\zeta}_{n,k}\leq\bar{V}_{n}(\bar{a}_{n,k} -)\}}-1_{\{\bar{\zeta}_{n,i}\leq \bar{w}\}} + H(\bar{w})-H(\bar{V}_{n}(\bar{a}_{n,k} -))\big)^{2}.
\]
Because $\bar{\zeta}_{n,k}$ is independent of $\mathcal{F}_{n,k-1}$ and $\bar
{V}_{n}(\bar{a}_{n,k}-)$ is $\mathcal{F}_{n,k-1}$-measurable,
\begin{align*}
&  \mathbb{E}\Big[\big(1_{\{\bar{\zeta}_{n,k}\leq\bar{V}_{n}(\bar{a}_{n,k}
-)\}}-1_{\{\bar{\zeta}_{n,k}\leq  \bar{w}\}}+H(\bar{w})-H(\bar{V}_{n}(\bar{a}_{n,k}
-))\big)^{2} \big\vert \mathcal{F}_{n,k-1}\Big]\\
&\quad  =H(\bar{V}_{n}(\bar{a}_{n,k}-))+H(\bar{w})-2H(\bar{V}_{n}(\bar{a}%
_{n,k}-)\wedge  \bar{w})-\big(H(\bar{w})-H(\bar{V}_{n}(\bar{a}_{n,k}-))\big)^{2}\\
&\quad  \leq H(\bar{V}_{n}(\bar{a}_{n,k}-))+H(\bar{w})-2H(\bar{V}_{n}(\bar{a}%
_{n,k}-)\wedge \bar{w})\\
&\quad  =|H(\bar{V}_{n}(\bar{a}_{n,k}-))-H(\bar{w})|.
\end{align*}
Hence,
\[
\mathbb{E\big[}[\tilde{Z}_{n}-\tilde{Z}^{\prime}_{n}](t)\big]
\leq \frac{1}{n\gamma_{n}
}\sum_{k=1}^{\lfloor n\gamma_{n}t\rfloor}\mathbb{E}[|H(\bar{V}_{n}(\bar
{a}_{n,k}-))-H(\bar{w})|]\leq t\cdot\mathbb{E}\Big[\sup_{1\leq k\leq n\gamma_{n}%
t}|H(\bar{V}_{n}(\bar{a}_{n,k}-))-H(\bar{w})|\Big].
\]
For any $\varepsilon>0$ and $ t>0 $, it follows from \eqref{eq:bounded-density} that
\begin{align*}
&  \mathbb{P}\Big[\sup_{1\leq k\leq n\gamma_{n}t}|H(\bar{V}_{n}(\bar{a}%
_{n,k}-))-H(\bar{w})|>\varepsilon\Big]\\
& \quad  \leq\mathbb{P}\Big[\sup_{1\leq k\leq n\gamma_{n}t}|\bar{V}_{n}(\bar
{a}_{n,k}-)-\bar{w}|>\frac{\varepsilon}{\kappa}\Big]\\
& \quad  \leq\mathbb{P}\Big[\sup_{1\leq k\leq E_{n}(\mu^{-1}\gamma_{n}t)}|\bar
{V}_{n}(\bar{a}_{n,k}-)-\bar{w}|>\frac{\varepsilon}{\kappa}\Big]+\mathbb{P}[E_{n}(\mu^{-1}\gamma
_{n}t)<n\gamma_{n}t]\\
& \quad  \leq\mathbb{P}\Big[\sup_{1\leq u\leq\mu^{-1}t}|\bar{V}_{n}(u)-\bar{w}|>\frac{\varepsilon}{\kappa}
\Big]+\mathbb{P}[\bar{E}_{n}(\mu^{-1}t)<t].
\end{align*}
Then, we deduce from \eqref{eq:fluid-E} and Proposition~\ref{prop:fluid-V} that
\[
\lim_{n\rightarrow\infty}\mathbb{P}\Big[\sup_{1\leq k\leq n\gamma_{n}t}%
|H(\bar{V}_{n}(\bar{a}_{n,k}-))-H(\bar{w})|>\varepsilon\Big]=0.
\]
Because $|H(\bar{V}_{n}(\bar{a}_{n,k}-))-H(\bar{w})|\leq 1$, we further obtain
\[
\lim_{n\rightarrow\infty}\mathbb{E}\Big[\sup_{1\leq k\leq n\gamma_{n}t}%
|H(\bar{V}_{n}(\bar{a}_{n,k}-))-H(\bar{w})|\Big]=0.
\]
Then,
$\mathbb{E\big[}[\tilde{Z}_{n}-\tilde{Z}^{\prime}_{n}](t)\big]\rightarrow0$ and thus
$[\tilde{Z}_{n}-\tilde{Z}^{\prime}_{n}](t)\Rightarrow0$ as $n\rightarrow\infty$ for all
$t>0$. In addition, because $|\Delta(\tilde{Z}_{n}-\tilde{Z}^{\prime}_{n})(t)|
\leq 2/\sqrt{n\gamma_{n}}$, it follows from the martingale FCLT that $\tilde{Z}_{n}-\tilde{Z}^{\prime}_{n}\Rightarrow0$ as $n\rightarrow\infty$, which, along with \eqref{eq:EBZ}
and the convergence-together theorem, implies that
\[
(\tilde{E}_{n},\tilde{B}_{n},\tilde{Z}_{n})\Rightarrow(\hat{E},\hat
{B},\hat{Z})\quad\mbox{as }n\rightarrow\infty.
\]
Because $\tilde{R}_{n}=\tilde{Z}_{n}\circ\bar{E}_{n}$, we conclude the
proof using \eqref{eq:fluid-E} and the random time-change theorem.
\end{proof}

\begin{proof}
	[Proof of Lemma~\ref{lemma:HV}]
Put
\[
\tilde{\Upsilon}_{n}^{\prime}(t)=\frac{1}{\mu\sqrt{n\gamma_{n}}}\sum
_{k=1}^{E_{n}(\gamma_{n}t)}\big(H(\bar{V}_{n}(\bar{a}_{n,k}-))-H(\bar{w})\big)- \sqrt{n\gamma_{n}} \rho \int_{0}^{t}\big(H(\bar{V}_{n}(u))-H(\bar{w})\big)\,\mathrm{d}u.
\]
By \eqref{eq:bar-VR} and integration by parts,
\begin{align*}
\tilde{\Upsilon}_{n}^{\prime}(t)  &  =\frac{\sqrt{n\gamma_{n}}}{\mu}\int%
_{0}^{t}\big(H(\bar{V}_{n}(u-))-H(\bar{w})\big)\,\mathrm{d}\bar{E}_{n}%
(u)-\sqrt{n\gamma_{n}}\rho\int_{0}^{t}\big(H(\bar{V}_{n}%
(u))-H(\bar{w})\big)\,\mathrm{d}u\\ &  =\frac{\sqrt{n\gamma_{n}}}{\mu}
\big(H(\bar{V}_{n}(t))-H(\bar{w})\big)\big(\bar {E}_{n}(t)-\rho\mu
t\big)-\frac{\sqrt{n\gamma_{n}}}{\mu}\int_{0}^{t}%
\big(\bar{E}_{n}(t)-\rho\mu t\big)\,\mathrm{d}H(\bar{V}_{n}(u))\\ &
=\frac{1}{\mu}\big(H(\bar{V}_{n}(t)) -H(\bar{w})\big)\tilde{E}_{n}(t)-\frac{1}%
{\mu}\int_{0}^{t}\tilde{E}_{n}(u-)\,\mathrm{d}H(\bar{V}_{n}(u))
+\frac{1}{\mu}\sum_{0<u\leq
	t}\Delta\tilde{E}_{n}(u)\Delta H(\bar{V}_{n}(u)).
\end{align*}
Consider the three terms on the right side. By \eqref{eq:E} and
Proposition~\ref{prop:fluid-V},%
\[
\sup_{0\leq t\leq T}\big\vert\big(H(\bar{V}_{n}(t))-H(\bar{w})\big)\tilde{E}%
_{n}(t)\big\vert\Rightarrow0\quad\text{as }n\rightarrow\infty.
\]
Using the continuous mapping theorem and Theorem~3.9 in \citet{Billingsley99},
we obtain
\[
(\tilde{E}_{n},H\circ\bar{V}_{n})\Rightarrow(\hat{E},H(\bar{w})\chi)\quad
\mbox{as }n\rightarrow\infty.
\]
By \eqref{eq:bounded-density},
$ \int_{0}^{t}|\mathrm{d}H(\bar{V}_{n}(u))|\leq\kappa\int_{0}^{t}|\mathrm{d}%
\bar{V}_{n}(u)|$.
Then, it follows from \eqref{eq:V-total} that
\begin{equation}
\lim_{a\rightarrow\infty}\limsup_{n\rightarrow\infty}\mathbb{P}\Big[\int%
_{0}^{T}|\mathrm{d}H(\bar{V}_{n}(t))|>a\Big]=0, \label{eq:dHV}%
\end{equation}
which implies that $\{H\circ\bar{V}_{n}:n\in\mathbb{N}\}$ is uniformly tight
(see Definition~7.4 in \citet{KurtzProtter96} for the definition of uniform
tightness). Because $\int_{0}^{t}H(\bar{w})\hat{E}(u-)\,\mathrm{d}\chi(u)=0$
for all $t>0$,
it follows from Theorem~7.10 in \citet{KurtzProtter96} that
\[
\sup_{0\leq t\leq T}\Big\vert\int_{0}^{t}\tilde{E}_{n}(u-)\,\mathrm{d}%
H(\bar{V}_{n}(u))\Big\vert\Rightarrow0\quad\text{as }n\rightarrow\infty.
\]
In addition, the third term satisfies
\[
\sup_{0\leq t\leq T}\Big\vert\sum_{0<t\leq T}\Delta\tilde{E}_{n}(t)\Delta
H(\bar{V}_{n}(t))\Big\vert\leq\sup_{0\leq t\leq T}|\Delta\tilde{E}_{n}%
(t)|\int_{0}^{T}|\mathrm{d}H(\bar{V}_{n}(t))|.
\]
By \eqref{eq:E}, \eqref{eq:dHV}, and the fact that $\hat{E}$ has almost sure
continuous sample paths,
\[
\sup_{0\leq t\leq T}\Big\vert\sum_{0<t\leq T}\Delta\tilde{E}_{n}(t)\Delta
H(\bar{V}_{n}(t))\Big\vert\Rightarrow0\quad\text{as }n\rightarrow\infty.
\]
Therefore, $\tilde{\Upsilon}_{n}^{\prime}\Rightarrow0$ as $n\rightarrow\infty$.

Rewrite \eqref{eq:Vn-diffusion} into%
\[
\tilde{V}_{n}(t)=\tilde{V}_{n}(0)+\tilde{M}_{n}(t)-\tilde{\Upsilon}%
_{n}^{\prime}(t)-\sqrt{n\gamma_{n}}\rho\int_{0}^{t}\big(H(\bar{V}%
_{n}(u))-H(\bar{w})\big)\,\mathrm{d}u.
\]
It follows from \eqref{eq:bounded-density} and \eqref{eq:V-tilde} that
\begin{align*}
|\tilde{V}_{n}(t)|  &  \leq|\tilde{V}_{n}(0)+\tilde{M}_{n}(t)-\tilde{\Upsilon
}_{n}^{\prime}(t)|+\sqrt{n\gamma_{n}}\rho\int_{0}^{t}|H(\bar{V}_{n}%
(u))-H(\bar{w})|\,\mathrm{d}u\\
&  \leq|\tilde{V}_{n}(0)+\tilde{M}_{n}(t)-\tilde{\Upsilon}_{n}^{\prime
}(t)|+\kappa\rho\int_{0}^{t}|\tilde{V}_{n}(u)|\,\mathrm{d}u.
\end{align*}
Using Gronwall's inequality (see Lemma~21.4 in \citet{Kallenberg02}), we
obtain
\[
|\tilde{V}_{n}(t)|\leq|\tilde{V}_{n}(0)+\tilde{M}_{n}(t)-\tilde{\Upsilon}%
_{n}^{\prime}(t)|\exp(\kappa\rho t).
\]
By \eqref{eq:initial}, \eqref{eq:tilde-WV}, \eqref{eq:M-tilde}, Lemma~\ref{lemma:EBR}, and the convergence of
$\tilde{\Upsilon}_{n}^{\prime}$, we have%
\[
\lim_{a\rightarrow\infty}\limsup_{n\rightarrow\infty}\mathbb{P}\Big[\sup
_{0\leq t\leq T}|\tilde{V}_{n}(0)+\tilde{M}_{n}(t)-\tilde{\Upsilon}^{\prime}
_{n}(t)|>a\Big]=0,
\]
which implies that $\{\tilde{V}_{n}:n\in\mathbb{N}\}$ is stochastically bounded.

Put
\[
\varphi(t)=%
\begin{cases}
(t-\bar{w})^{-1}(H(t)-H(\bar{w})) & \text{for }t\neq \bar{w},\\
f_{H}(\bar{w}) & \text{for }t=\bar{w}.
\end{cases}
\]
By \eqref{eq:bar-VR} and \eqref{eq:V-tilde},
\[
\sqrt{n\gamma_{n}}\int_{0}^{t}\big(H(\bar{V}_{n}%
(u))-H(\bar{w})\big)\,\mathrm{d}u- f_{H}(\bar{w})\int_{0}^{t}\tilde{V}_{n}(u)\,\mathrm{d}u
=\int_{0}^{t}\big(\varphi(\bar{V}_{n}(u))-f_{H}(\bar{w})\big)\tilde{V}%
_{n}(u)\,\mathrm{d}u.
\]
Because $\lim_{u\rightarrow\bar{w}}\varphi(u)=f_{H}(\bar{w})$ and $\{\tilde{V}_{n}:n\in\mathbb{N}\}$ is stochastically bounded, it follows from
Proposition~\ref{prop:fluid-V} that
\[
\sup_{0\leq t\leq T}\Big\vert\sqrt{n\gamma_{n}}\int_{0}^{t}\big(H(\bar
{V}_{n}(u))-H(\bar{w})\big)\,\mathrm{d}u-f_{H}(\bar{w})\int_{0}^{t}\tilde{V}_{n}
(u)\,\mathrm{d}u\Big\vert\Rightarrow0\quad\text{as }n\rightarrow\infty,
\]
which, along with the convergence of $\tilde{\Upsilon}_{n}^{\prime}$, leads to
the assertion of the lemma.
\end{proof}

\subsection{Proofs of lemmas for Theorem~\ref{theorem:many-server}}
\label{sec:lemmas-Theorem-2}

\begin{proof}
	[Proof of Lemma~\ref{lemma:WGG}]
Using integration by parts, we obtain
\[
\tilde{G}_{n}^{\prime}(t)=\int_{t}^{t+\bar{w}}\tilde{E}_{n}(u)\,\mathrm{d}
H(t+\bar{w}-u)+\tilde{E}_{n}(t+\bar{w})-(1-H(\bar{w}))\tilde{E}_{n}(t).
\]
It follows from \eqref{eq:E} that

\[
\tilde{G}_{n}^{\prime}(t)\Rightarrow\int_{t}^{t+\bar{w}}\hat{E}(u)\,\mathrm{d}
H(t+\bar{w}-u)+\hat{E}(t+\bar{w})-(1-H(\bar{w}))\hat{E}(t)\quad\text{as
}n\rightarrow\infty.
\]
Using integration by parts again, we have
\[
\int_{t}^{t+\bar{w}}\hat{E}(u)\,\mathrm{d}H(t+\bar{w}-u)+\hat{E}(t+\bar
{w})-(1-H(\bar{w}))\hat{E}(t)=\int_{t}^{t+\bar{w}}(1-H(t+\bar{w}
-u))\,\mathrm{d}\hat{E}(u).
\]
Then, $\tilde{G}_{n}^{\prime}(t)\Rightarrow\hat{G}^{\prime}(t)$ as
$n\rightarrow\infty$.

Write $a_{n,k}^{\prime}=a_{n,k+E_{n}(\gamma_{n}t)}$ and $\zeta_{n,k}^{\prime
}=\zeta_{n,k+E_{n}(\gamma_{n}t)}$, which are the arrival and patience times of
the $k$th customer arriving after $\gamma_{n}t$, respectively. Put
\[
\tilde{K}_{n}(u)=\frac{1}{\sqrt{n\gamma_{n}}}\sum_{k=1}^{\lfloor n\gamma
	_{n}u\rfloor}\big(H(\bar{w}-\bar{b}_{n,k})-1_{\{\bar{\zeta}_{n,k}^{\prime}\leq
	\bar{w}-\bar{b}_{n,k}\}}\big)
\]
with $\bar{b}_{n,k}=k/(\rho\mu n\gamma_{n})$. Then, $\tilde{K}_{n}$ is a
martingale with quadratic variation%
\[
\lbrack\tilde{K}_{n}](u)=\frac{1}{n\gamma_{n}}\sum_{k=1}^{\lfloor n\gamma
	_{n}u\rfloor}\big(  H(\bar{w}-\bar{b}_{n,k})-1_{\{\bar{\zeta}_{n,k}^{\prime
	}\leq\bar{w}-\bar{b}_{n,k}\}}\big)^{2}.
\]
Because
\[
\mathbb{E}\big[\big(H(\bar{w}-\bar{b}_{n,k})-1_{\{\bar{\zeta}_{n,k}^{\prime}\leq
	\bar{w}-\bar{b}_{n,k}\}}\big)^{2}\big]=H(\bar{w}-\bar{b}_{n,k})(1-H(\bar{w}-\bar
{b}_{n,k})),
\]
by the weak law of large numbers (see, e.g., Theorem~5.14 of \citet{Klenke14}),
we obtain%
\[
\lbrack\tilde{K}_{n}](u)\Rightarrow\rho\mu\int_{0}^{u/\rho\mu}H(\bar
{w}-s)(1-H(\bar{w}-s))\,\mathrm{d}s.
\]
Since $|\Delta\tilde{K}_{n}(u)|\leq1/\sqrt{n\gamma_{n}}$ for $u\geq0$, we
further have%
\[
\lim_{n\rightarrow\infty}\sup_{0\leq u\leq T}|\Delta\tilde{K}_{n}%
(u)|=0\quad\text{for }T>0.
\]
Then, it follows from the martingale FCLT that
$ \tilde{K}_{n}\Rightarrow\hat{K}$ as $n\rightarrow\infty$,
where
\[
\hat{K}(u)=\int_{0}^{u/\rho\mu}\sqrt{\rho\mu H(\bar{w}-s)(1-H(\bar{w}%
	-s))}\,\mathrm{d}\hat{S}(s).
\]
Put%
\[
\tilde{K}_{n}^{\prime}(u) = \frac{1}{\sqrt{n\gamma_{n}}} \sum_{k=1}^{\lfloor
	n\gamma_{n}u\rfloor}\big(H(\bar{W}_{n}(t)-\bar{a}_{n,k}^{\prime}) -
1_{\{\bar{\zeta}_{n,k}^{\prime} \leq \bar{W}_{n}(t)-\bar{a}_{n,k}^{\prime}\}}\big).
\]
By similar arguments as in the proof of Lemma~\ref{lemma:EBR}, we can prove that
$\tilde{K}_{n}-\tilde{K}_{n}^{\prime}\Rightarrow0$ as $n\rightarrow\infty$.

By \eqref{eq:Vn-diffusion}, Lemmas~\ref{lemma:EBR} and~\ref{lemma:HV}, Proposition~\ref{prop:virtual-limit-perturbed}, and the asymptotic equivalence between $\tilde {V}_{n}$ and $\tilde{W}_{n}$, we deduce that
$(\tilde{E}_{n},\tilde{W}_{n})\Rightarrow(\hat{E},\hat{W})$, so that $(\tilde{W}_{n},\tilde{G}_{n}^{\prime})\Rightarrow
(\hat{W},\hat{G}^{\prime})$ as $n\rightarrow\infty$. Because $\tilde
{K}_{n}$ is independent of $(\tilde{W}_{n}(t),\tilde{G}_{n}^{\prime}(t))$,
we further obtain
\[
(\tilde{W}_{n}(t)\chi,\tilde{G}_{n}^{\prime}(t)\chi,\tilde{K}_{n})\Rightarrow
(\hat{W}(t)\chi,\hat{G}^{\prime}(t)\chi,\hat{K})\quad\text{as }n\rightarrow\infty,
\]
where $\hat{W}(t)$, $\hat{G}(t)$, and $\hat{K}$ are mutually independent.
Because $\tilde{K}_{n}-\tilde{K}_{n}^{\prime}\Rightarrow0$ as $n\rightarrow
\infty$ and
\[
\tilde{G}_{n}''(t)=\tilde{K}_{n}^{\prime}(\bar{E}_{n}(t+\bar{W}%
_{n}(t))-\bar{E}_{n}(t)),
\]
the assertion of the lemma follows from \eqref{eq:fluid-E}, \eqref{eq:Wn-bar-limit}, and the random time-change theorem.
\end{proof}

\begin{proof}
	[Proof of Lemma~\ref{lemma:YY}]
Since $\tilde{E}_{n}\Rightarrow\hat{E}$ and $\hat{E}$
has almost sure continuous sample paths, then by \eqref{eq:Wn-bar-limit},
\[
\tilde{E}_{n}(t+\bar{W}_{n}(t))-\tilde{E}_{n}(t+\bar{w})\Rightarrow
0\quad\text{as }n\rightarrow\infty.
\]
Because
\[
\Big\vert\sqrt{n\gamma_{n}}\int_{t+\bar{w}}^{t+\bar{W}_{n}(t)}
H(t+\bar{W}_{n}(t)-u)\,\mathrm{d}\bar{E}_{n}(u)\Big\vert
\leq H(\vert \bar{W}_{n}(t)-\bar{w}\vert )\big\vert \tilde{W}_{n}(t)\big(\bar{E}_{n}(t+\bar{W}_{n}(t))-
\bar{E}_{n}(t+\bar{w})\big)\big\vert ,
\]
it follows from \eqref{eq:fluid-E}, \eqref{eq:Wn-bar-limit}, and Theorem~\ref{theorem:many-server-virtual} that
\[
\sqrt{n\gamma_{n}}\int_{t+\bar{w}}^{t+\bar{W}_{n}(t)}H(t+\bar{W}_{n}(t)-u)
\,\mathrm{d} \bar{E}_{n}(u) \Rightarrow 0 \quad \mbox{as }n\rightarrow
\infty.
\]
Write
\[
\varphi(u,\delta)=%
\begin{cases}
\delta^{-1}(H(u+\delta)-H(u)) & \text{for }\delta\neq0,\\
f_{H}(u) & \text{for }\delta=0.
\end{cases}
\]
Then, $\lim_{\delta\rightarrow0}\varphi(u,\delta)=f_{H}(u)$ for
$u\geq0$. Using the fact that $H(\bar{w})=(\rho-1)/\rho$, we obtain%
\begin{align*}
&  (\rho-1)\mu\tilde{W}_{n}(t)-\sqrt{n\gamma_{n}}\rho\mu\int_{0}^{\bar{w}
}(H(\bar{W}_{n}(t)-u)-H(\bar{w}-u))\,\mathrm{d}u\\
&  =\rho\mu\tilde{W}_{n}(t)\int_{0}^{\bar{w}}(  f_{H}(\bar{w}
-u)-\varphi(\bar{w}-u,\bar{W}_{n}(t)-\bar{w}))  \,\mathrm{d}u.
\end{align*}
By \eqref{eq:bounded-density}, we have $|f_{H}(\bar{w}-u) -
\varphi(\bar{w}-u, \bar{W}_{n}(t) - \bar {w})| \leq 2\kappa$. It follows
from \eqref{eq:Wn-bar-limit}, Theorem~\ref{theorem:many-server-virtual}, and the dominated convergence theorem that
\[
(\rho-1)\mu\tilde{W}_{n}(t)-\sqrt{n\gamma_{n}}\rho\mu\int_{0}^{\bar{w}}%
(H(\bar{W}_{n}(t)-u)-H(\bar{w}-u))\,\mathrm{d}u\Rightarrow0\quad\text{as
}n\rightarrow\infty.
\]
Hence, $\tilde{Y}_{n}(t)\Rightarrow0$ as $n\rightarrow\infty$.

Put $\check{H}_{n}(u)=H(t+\bar{w}-u)-H(t+\bar{W}_{n}(t)-u)$ for $u\geq0$. By
\eqref{eq:Wn-bar-limit} and Theorem~3.9 in \citet{Billingsley99}, we obtain
\[
(\tilde{E}_{n},\check{H}_{n})\Rightarrow(\hat{E},0)\quad\mbox{as }
n\rightarrow\infty.
\]
By similar arguments as in the proof of Lemma~\ref{lemma:HV}, we can show that
$\{\check{H}_{n}:n\in\mathbb{N}\}$ is uniformly tight. Using integration by
parts, we obtain%
\[
\tilde{Y}_{n}^{\prime}(t)=\int_{t}^{t+\bar{w}}\tilde{E}_{n}(u)\,\mathrm{d}%
\check{H}_{n}(u)+\check{H}_{n}(t+\bar{w})\tilde{E}_{n}(t+\bar{w})-\check
{H}_{n}(t)\tilde{E}_{n}(t).
\]
Then, it follows from Theorem~7.10 in \citet{KurtzProtter96} that
\[
\int_{t}^{t+\bar{w}}\tilde{E}_{n}(u)\,\mathrm{d}\check{H}_{n}(u)\Rightarrow
0\quad\text{as }n\rightarrow\infty,
\]
which, along with \eqref{eq:E}, implies that
$\tilde{Y}_{n}^{\prime}(t)\Rightarrow 0$ as $n\rightarrow\infty$. The joint convergence follows from Theorem~3.9 in \citet{Billingsley99}.
\end{proof}

\section*{Acknowledgements}
This research was supported in part by MOE AcRF Grant R-266-000-063-133 and NUS
Global Asia Institute Grant R-716-000-006-133.

\bibliographystyle{ormsv080}
\bibliography{../refs}
\end{document}